\providecommand{\tabularnewline}{\\}
\numberwithin{equation}{section}
\numberwithin{figure}{section}
\theoremstyle{plain}
\newtheorem{thm}{\protect\theoremname}[section]
\theoremstyle{definition}
\newtheorem{defn}[thm]{\protect\definitionname}
\theoremstyle{remark}
\newtheorem{rem}[thm]{\protect\remarkname}
\theoremstyle{plain}
\newtheorem{lem}[thm]{\protect\lemmaname}
\theoremstyle{plain}
\newtheorem{cor}[thm]{\protect\corollaryname}
\theoremstyle{plain}
\newtheorem{prop}[thm]{\protect\propositionname}
\theoremstyle{definition}
\newtheorem{example}[thm]{\protect\examplename}
\theoremstyle{plain}
\newtheorem{conjecture}[thm]{\protect\conjecturename}
\subjclass[2020]{11G50, 11G35, 14G05, 14E16, 14D23}
\providecommand{\conjecturename}{Conjecture}
\providecommand{\corollaryname}{Corollary}
\providecommand{\definitionname}{Definition}
\providecommand{\examplename}{Example}
\providecommand{\lemmaname}{Lemma}
\providecommand{\propositionname}{Proposition}
\providecommand{\remarkname}{Remark}
\providecommand{\theoremname}{Theorem}
\begin{document}
\title{The Batyrev--Manin conjecture for DM stacks}
\author{Ratko Darda and Takehiko Yasuda}
\address{Department of Mathematics, Graduate School of Science, Osaka University
Toyonaka, Osaka 560-0043, JAPAN}
\email{ratko.darda@gmail.com}
\address{Department of Mathematics, Graduate School of Science, Osaka University
Toyonaka, Osaka 560-0043, JAPAN}
\email{yasuda.takehiko.sci@osaka-u.ac.jp}
\begin{abstract}
We define a new height function on rational points of a DM (Deligne--Mumford)
stack over a number field. This generalizes a generalized discriminant
of Ellenberg--Venkatesh, the height function recently introduced
by Ellenberg--Satriano--Zureick-Brown (as far as DM stacks over
number fields are concerned), and the quasi-toric height function
on weighted projective stacks by Darda. Generalizing the Manin conjecture
and the more general Batyrev--Manin conjecture, we formulate a few
conjectures on the asymptotic behavior of the number of rational points
of a DM stack with bounded height. To formulate the Batyrev--Manin
conjecture for DM stacks, we introduce the orbifold versions of the
so-called $a$- and $b$-invariants. When applied to the classifying
stack of a finite group, these conjectures specialize to Malle's
conjecture, except that we remove certain thin subsets from counting.
More precisely, we remove breaking thin subsets, which have been studied
in the case of varieties by people including Hassett, Tschinkel, Tanimoto,
Lehmann and Sengupta, and can be generalized to DM stack thanks to
our generalization of $a$- and $b$-invariants. The breaking thin
subset enables us to reinterpret Klüners' counterexample to Malle's
conjecture. 
\end{abstract}

\maketitle
\global\long\def\bigmid{\mathrel{}\middle|\mathrel{}}%

\global\long\def\AA{\mathbb{A}}%

\global\long\def\CC{\mathbb{C}}%

\global\long\def\FF{\mathbb{F}}%

\global\long\def\GG{\mathbb{G}}%

\global\long\def\LL{\mathbb{L}}%

\global\long\def\MM{\mathbb{M}}%

\global\long\def\NN{\mathbb{N}}%

\global\long\def\PP{\mathbb{P}}%

\global\long\def\QQ{\mathbb{Q}}%

\global\long\def\RR{\mathbb{R}}%

\global\long\def\SS{\mathbb{S}}%

\global\long\def\ZZ{\mathbb{Z}}%

\global\long\def\bA{\mathbf{A}}%

\global\long\def\ba{\mathbf{a}}%

\global\long\def\bb{\mathbf{b}}%

\global\long\def\bd{\mathbf{d}}%

\global\long\def\bf{\mathbf{f}}%

\global\long\def\bg{\mathbf{g}}%

\global\long\def\bh{\mathbf{h}}%

\global\long\def\bj{\mathbf{j}}%

\global\long\def\bm{\mathbf{m}}%

\global\long\def\bp{\mathbf{p}}%

\global\long\def\bq{\mathbf{q}}%

\global\long\def\br{\mathbf{r}}%

\global\long\def\bs{\mathbf{s}}%

\global\long\def\bt{\mathbf{t}}%

\global\long\def\bv{\mathbf{v}}%

\global\long\def\bw{\mathbf{w}}%

\global\long\def\bx{\boldsymbol{x}}%

\global\long\def\by{\boldsymbol{y}}%

\global\long\def\bz{\mathbf{z}}%

\global\long\def\cA{\mathcal{A}}%

\global\long\def\cB{\mathcal{B}}%

\global\long\def\cC{\mathcal{C}}%

\global\long\def\cD{\mathcal{D}}%

\global\long\def\cE{\mathcal{E}}%

\global\long\def\cF{\mathcal{F}}%

\global\long\def\cG{\mathcal{G}}%

\global\long\def\cH{\mathcal{H}}%

\global\long\def\cI{\mathcal{I}}%

\global\long\def\cJ{\mathcal{J}}%

\global\long\def\cK{\mathcal{K}}%

\global\long\def\cL{\mathcal{L}}%

\global\long\def\cM{\mathcal{M}}%

\global\long\def\cN{\mathcal{N}}%

\global\long\def\cO{\mathcal{O}}%

\global\long\def\cP{\mathcal{P}}%

\global\long\def\cQ{\mathcal{Q}}%

\global\long\def\cR{\mathcal{R}}%

\global\long\def\cS{\mathcal{S}}%

\global\long\def\cT{\mathcal{T}}%

\global\long\def\cU{\mathcal{U}}%

\global\long\def\cV{\mathcal{V}}%

\global\long\def\cW{\mathcal{W}}%

\global\long\def\cX{\mathcal{X}}%

\global\long\def\cY{\mathcal{Y}}%

\global\long\def\cZ{\mathcal{Z}}%

\global\long\def\fa{\mathfrak{a}}%

\global\long\def\fb{\mathfrak{b}}%

\global\long\def\fc{\mathfrak{c}}%

\global\long\def\ff{\mathfrak{f}}%

\global\long\def\fj{\mathfrak{j}}%

\global\long\def\fm{\mathfrak{m}}%

\global\long\def\fp{\mathfrak{p}}%

\global\long\def\fs{\mathfrak{s}}%

\global\long\def\ft{\mathfrak{t}}%

\global\long\def\fx{\mathfrak{x}}%

\global\long\def\fv{\mathfrak{v}}%

\global\long\def\fD{\mathfrak{D}}%

\global\long\def\fJ{\mathfrak{J}}%

\global\long\def\fG{\mathfrak{G}}%

\global\long\def\fK{\mathfrak{K}}%

\global\long\def\fM{\mathfrak{M}}%

\global\long\def\fO{\mathfrak{O}}%

\global\long\def\fS{\mathfrak{S}}%

\global\long\def\fV{\mathfrak{V}}%

\global\long\def\fX{\mathfrak{X}}%

\global\long\def\fY{\mathfrak{Y}}%

\global\long\def\ru{\mathrm{u}}%

\global\long\def\rv{\mathbf{\mathrm{v}}}%

\global\long\def\rw{\mathrm{w}}%

\global\long\def\rx{\mathrm{x}}%

\global\long\def\ry{\mathrm{y}}%

\global\long\def\rz{\mathrm{z}}%

\global\long\def\a{\mathrm{a}}%

\global\long\def\AdGp{\mathrm{AdGp}}%

\global\long\def\Aff{\mathbf{Aff}}%

\global\long\def\Alg{\mathbf{Alg}}%

\global\long\def\age{\operatorname{age}}%

\global\long\def\Ann{\mathrm{Ann}}%

\global\long\def\Aut{\operatorname{Aut}}%

\global\long\def\B{\operatorname{\mathrm{B}}}%

\global\long\def\Bl{\mathrm{Bl}}%

\global\long\def\c{\mathrm{c}}%

\global\long\def\C{\operatorname{\mathrm{C}}}%

\global\long\def\calm{\mathrm{calm}}%

\global\long\def\center{\mathrm{center}}%

\global\long\def\characteristic{\operatorname{char}}%

\global\long\def\cjun{c\textrm{-jun}}%

\global\long\def\codim{\operatorname{codim}}%

\global\long\def\Coker{\mathrm{Coker}}%

\global\long\def\Conj{\operatorname{Conj}}%

\global\long\def\D{\mathrm{D}}%

\global\long\def\Df{\mathrm{Df}}%

\global\long\def\diag{\mathrm{diag}}%

\global\long\def\det{\operatorname{det}}%

\global\long\def\discrep#1{\mathrm{discrep}\left(#1\right)}%

\global\long\def\doubleslash{\sslash}%

\global\long\def\E{\operatorname{E}}%

\global\long\def\Emb{\operatorname{Emb}}%

\global\long\def\et{\textrm{ét}}%

\global\long\def\etop{\mathrm{e}_{\mathrm{top}}}%

\global\long\def\el{\mathrm{e}_{l}}%

\global\long\def\Exc{\mathrm{Exc}}%

\global\long\def\FConj{F\textrm{-}\Conj}%

\global\long\def\Fitt{\operatorname{Fitt}}%

\global\long\def\Fr{\mathrm{Fr}}%

\global\long\def\Gal{\operatorname{Gal}}%

\global\long\def\GalGps{\mathrm{GalGps}}%

\global\long\def\GL{\mathrm{GL}}%

\global\long\def\Grass{\mathrm{Grass}}%

\global\long\def\H{\operatorname{\mathrm{H}}}%

\global\long\def\hattimes{\hat{\times}}%

\global\long\def\hatotimes{\hat{\otimes}}%

\global\long\def\Hilb{\mathrm{Hilb}}%

\global\long\def\Hodge{\mathrm{Hodge}}%

\global\long\def\Hom{\operatorname{Hom}}%

\global\long\def\hyphen{\textrm{-}}%

\global\long\def\I{\operatorname{\mathrm{I}}}%

\global\long\def\id{\mathrm{id}}%

\global\long\def\Image{\operatorname{\mathrm{Im}}}%

\global\long\def\ind{\mathrm{ind}}%

\global\long\def\injlim{\varinjlim}%

\global\long\def\Inn{\mathrm{Inn}}%

\global\long\def\iper{\mathrm{iper}}%

\global\long\def\Iso{\operatorname{Iso}}%

\global\long\def\isoto{\xrightarrow{\sim}}%

\global\long\def\J{\operatorname{\mathrm{J}}}%

\global\long\def\Jac{\mathrm{Jac}}%

\global\long\def\kConj{k\textrm{-}\Conj}%

\global\long\def\KConj{K\textrm{-}\Conj}%

\global\long\def\Ker{\operatorname{Ker}}%

\global\long\def\Kzero{\operatorname{K_{0}}}%

\global\long\def\lcr{\mathrm{lcr}}%

\global\long\def\lcm{\operatorname{\mathrm{lcm}}}%

\global\long\def\length{\operatorname{\mathrm{length}}}%

\global\long\def\M{\operatorname{\mathrm{M}}}%

\global\long\def\MC{\mathrm{MC}}%

\global\long\def\MHS{\mathbf{MHS}}%

\global\long\def\mld{\mathrm{mld}}%

\global\long\def\mod#1{\pmod{#1}}%

\global\long\def\Mov{\overline{\mathrm{Mov}}}%

\global\long\def\mRep{\mathbf{mRep}}%

\global\long\def\mult{\mathrm{mult}}%

\global\long\def\N{\operatorname{\mathrm{N}}}%

\global\long\def\Nef{\mathrm{Nef}}%

\global\long\def\nor{\mathrm{nor}}%

\global\long\def\nr{\mathrm{nr}}%

\global\long\def\NS{\mathrm{NS}}%

\global\long\def\op{\mathrm{op}}%

\global\long\def\orb{\mathrm{orb}}%

\global\long\def\ord{\operatorname{ord}}%

\global\long\def\P{\operatorname{P}}%

\global\long\def\PEff{\overline{\mathrm{Eff}}}%

\global\long\def\PGL{\mathrm{PGL}}%

\global\long\def\pt{\mathbf{pt}}%

\global\long\def\pur{\mathrm{pur}}%

\global\long\def\perf{\mathrm{perf}}%

\global\long\def\perm{\mathrm{perm}}%

\global\long\def\Pic{\mathrm{Pic}}%

\global\long\def\pr{\mathrm{pr}}%

\global\long\def\Proj{\operatorname{Proj}}%

\global\long\def\projlim{\varprojlim}%

\global\long\def\Qbar{\overline{\QQ}}%

\global\long\def\QConj{\mathbb{Q}\textrm{-}\Conj}%

\global\long\def\R{\operatorname{\mathrm{R}}}%

\global\long\def\Ram{\operatorname{\mathrm{Ram}}}%

\global\long\def\rank{\operatorname{\mathrm{rank}}}%

\global\long\def\rat{\mathrm{rat}}%

\global\long\def\Ref{\mathrm{Ref}}%

\global\long\def\rig{\mathrm{rig}}%

\global\long\def\red{\mathrm{red}}%

\global\long\def\reg{\mathrm{reg}}%

\global\long\def\rep{\mathrm{rep}}%

\global\long\def\Rep{\mathbf{Rep}}%

\global\long\def\sbrats{\llbracket s\rrbracket}%

\global\long\def\Sch{\mathbf{Sch}}%

\global\long\def\sep{\mathrm{sep}}%

\global\long\def\Set{\mathbf{Set}}%

\global\long\def\sing{\mathrm{sing}}%

\global\long\def\sm{\mathrm{sm}}%

\global\long\def\SL{\mathrm{SL}}%

\global\long\def\Sp{\operatorname{Sp}}%

\global\long\def\Spec{\operatorname{Spec}}%

\global\long\def\Spf{\operatorname{Spf}}%

\global\long\def\ss{\mathrm{ss}}%

\global\long\def\st{\mathrm{st}}%

\global\long\def\Stab{\operatorname{Stab}}%

\global\long\def\Supp{\operatorname{Supp}}%

\global\long\def\spars{\llparenthesis s\rrparenthesis}%

\global\long\def\Sym{\mathrm{Sym}}%

\global\long\def\T{\operatorname{T}}%

\global\long\def\tame{\mathrm{tame}}%

\global\long\def\tbrats{\llbracket t\rrbracket}%

\global\long\def\top{\mathrm{top}}%

\global\long\def\tors{\mathrm{tors}}%

\global\long\def\tpars{\llparenthesis t\rrparenthesis}%

\global\long\def\Tr{\mathrm{Tr}}%

\global\long\def\ulAut{\operatorname{\underline{Aut}}}%

\global\long\def\ulHom{\operatorname{\underline{Hom}}}%

\global\long\def\ulInn{\operatorname{\underline{Inn}}}%

\global\long\def\ulIso{\operatorname{\underline{{Iso}}}}%

\global\long\def\ulSpec{\operatorname{\underline{{Spec}}}}%

\global\long\def\Utg{\operatorname{Utg}}%

\global\long\def\Unt{\operatorname{Unt}}%

\global\long\def\Var{\mathbf{Var}}%

\global\long\def\Vol{\mathrm{Vol}}%

\global\long\def\Y{\operatorname{\mathrm{Y}}}%

\tableofcontents{}

\section{Introduction}

The Batyrev--Manin conjecture \cite{franke1989rational,batyrev1990surle}
and Malle's conjecture \cite{malle2002onthe,malle2004onthe} are
two important conjectures about asymptotic behaviors of arithmetic
objects; rational points of a variety and $G$-extensions of a number
field for a prescribed transitive subgroup $G$ of the symmetric group
$S_{n}$. There is a clear similarity between them. Under a proper
setting, for each positive real number $B>0$, the number of rational
points with height at most $B$ is finite. Similarly for the number
of $G$-extensions of a number field with discriminant at most $B$.
In both cases, if we denote these numbers by $N(B)$, then the conjectures
claim asymptotic formulas of the form 
\[
N(B)\sim CB^{a}(\log B)^{b-1}\quad(B\to\infty)
\]
for some constants $C>0$, $a>0$ and $b\ge1$, where $a$ and $b$
are invariants admitting simple expressions. There are also works
on searching for formulas of the leading constant $C$, notably \cite{peyre1995hauteurs,batyrev1998tamagawa}
for rational points and \cite{bhargava2007massformulae} for $S_{n}$-extensions. 

In \cite{yasuda2014densities,yasuda2015maninsconjecture}, the second-named
author studied relations between these conjectures from the viewpoint
of the McKay correspondence. In particular, he showed that constants
$a$ and $b$ in the two conjectures are closely related, and observed
by an heuristic argument using zeta functions that there are non-rigorous
``implications'' between the conjectures. Key ingredients in this
study were the \emph{discrepancy} invariant of singularities and the
\emph{age} invariant, as is often the case in studies on the McKay
correspondence and quotient singularities. Recently, the first-named
author \cite{darda2021rational2} and Ellenberg--Satriano--Zureick-Brown
\cite{ellenberg2021heights} started studying distributions of rational
points on algebraic stacks as a step toward unifying the Batyrev--Manin
conjecture and Malle's conjecture. For this purpose, they needed
to introduce new height functions on stacks, based on different ideas.
The first-named author introduced the notion of \emph{quasi-toric
heights} on weighted projective stacks and derived a precise asymptotic
formula for rational points of these stacks. On the other hand, Ellenberg--Satriano--Zureick-Brown
introduced the \emph{height function associated to a vector bundle}
$\cV$, motivated by the work of Wood--Yasuda \cite{wood2015massformulas}.
Moreover, under certain assumptions, they define the Fujita invariant
$a(\cV)$ of a vector bundle and conjecture that for any $\epsilon>0$,
there exist constants $C_{1},C_{2}>0$ satisfying 
\[
C_{1}B^{a(\cV)}\le N(B)\le C_{2}B^{a(\cV)+\epsilon},
\]
where $N(B)$ is the number similarly defined as before but with the
height function associated to $\cV$. They call this a weak form of
the stacky Batyrev--Manin--Malle Conjecture. Note that their definition
of $a(\cV)$ is not given in terms of the pseudo-effective cone like
in the original Batyrev--Manin conjecture. 

The purpose of the present article is to introduce yet another height
function for DM (Deligne--Mumford) stacks and formulate a few conjectures
on the asymptotic behavior of the number of rational points with bounded
height. In particular, we suggest two compatible ways of determining
the exponent of $\log B$ in relevant asymptotic formulas, while we
``normalize'' data necessary to define a height function in such
a way that the exponent of $B$ is expected to be 1. On the other
hand, we do not try to find a formula for the leading constant $C$,
except that we speculate upon conditions for such a formula to exist
(Remark \ref{rem:speculation-leading-C}). Our key ingredient is the
\emph{stack of twisted 0-jets}, denoted by $\cJ_{0}\cX$, of the given
stack $\cX$, which appears in motivic integration over DM stacks
\cite{yasuda2004twisted,yasuda2006motivic} and is also known as the
\emph{cyclotomic inertia stack }in the Gromov--Witten theory for
DM stacks \cite{abramovich2008gromovwitten}. The trivial connected
component of $\cJ_{0}\cX$ is called the \emph{non-twisted sector}
and the other connected components are called \emph{twisted sectors}.
The set of all sectors, denoted by $\pi_{0}(\cJ_{0}\cX)$, is equipped
with the age function, $\age\colon\pi_{0}(\cJ_{0}\cX)\to\QQ_{\ge0}$.
To get a meaningful height function especially when $\cX$ has a
non-trivial generic stabilizer, we need to choose another function
$c\colon\pi_{0}(\cJ_{0}\cX)\to\RR$, which we call a \emph{raising
function,} and define the function $\age_{c}:=\age+c$ on $\pi_{0}(\cJ_{0}\cX)$.
A line bundle $\cL$ on a DM stack $\cX$ given with a raising function
$c$ defines a height function $H_{\cL,c}$ on rational points of
$\cX$, which is unique modulo bounded functions. Specifying an adelic
metric on $\cL$ and a raising datum $c_{*}$ refining the raising
function $c$ completely determines the height function. Our height
function inherits some nice properties of the classical height function
on varieties, that is, the multiplicativity, the functoriality and
the Northcott property. This new height function generalizes ones
of the first-named author \cite{darda2021rational2} and Ellenberg--Satriano--Zureick-Brown
mentioned above (as far as DM stacks over a number field are concerned),
and also the ``$f$-discriminant'' of Ellenberg--Venkatesh \cite[p.\ 163]{ellenberg2005counting}.
Note that the paper \cite{ellenberg2021heights} treats also Artin
stacks over global fields. 

Our first conjecture, Conjecture \ref{conj:Fano-stack}, concerns
the case where our stack $\cX$ is \emph{Fano}, meaning that the anti-canonical
line bundle $\omega_{\cX}^{-1}$ corresponds to an ample $\QQ$-line
bundle on the coarse moduli space. We also assume that the raising
function is \emph{adequate, }which means that $\age_{c}(\cY)\ge1$
for every twisted sector $\cY$ and that if $\cX$ is of dimension
zero, then there exists a sector $\cY$ with $\age_{c}(\cY)=c(\cY)=1$.
The conjecture claims that in this situation, the asymptotic formula
for $H_{\omega_{\cX}^{-1},c}$ be of the form
\[
CB(\log B)^{\rho(\cX)+j_{c}(\cX)-1},
\]
where $\rho(\cX)$ is the Picard number and $j_{c}(\cX)$ is the number
of \emph{$c$-junior sectors}, that is, twisted sectors with $\age_{c}=1$,
provided that a suitable ``accumulation thin subset'' is removed.
As evidence of this conjecture, we show that the conjecture is compatible
with taking products of Fano stacks and with the Manin conjecture
for Fano varieties with canonical singularities.

The second conjecture, Conjecture \ref{conj:general}, treats a more
general situation and is formulated in terms of a variant of the pseudo-effective
cone, which we call the \emph{orbifold pseudo-effective cone}, like
the original Batyrev--Manin conjecture for varieties. We define $a$-
and $b$-invariants for the pair $(\cL,c)$ of a line bundle and a
raising function $c$ and denote them by $a(\cL,c)$ and $b(\cL,c)$.
The conjecture claims that if the pair $(\cL,c)$ is big (meaning
that the pair gives a point lying in the interior of the orbifold
pseudo-effective cone), if we assume the adequacy condition (Definition
\ref{def:adequacy-2}), which in particular implies $a(\cL,c)=1$,
and if we remove a suitable thin subset, then the asymptotic formula
for $H_{\cL,c}$ is of the form
\[
CB(\log B)^{b(\cL,c)-1}.
\]
We show that the second conjecture implies the first one; the proof
is non-trivial unlike in the case of varieties. Of course, our conjectures
are formulated in such a way that they specialize to the Manin and
Batyrev--Manin conjectures for varieties. On the other hand, applying
these conjectures to the classifying stack $\B G$ of a finite group
$G$, we obtain a version of Malle's conjecture with respect to
a generalized discriminant, which was considered in \cite{ellenberg2005counting},
except that the removed ``accumulation subsets'' are different.
We follow Peyre's suggestion \cite[p.\ 345]{peyre2003pointsde} to
remove a thin subset from the set of rational points. Our second conjecture
also incorporates the notion of\emph{ breaking thin morphisms,} which
have been studied in the case of varieties for example in \cite{hassett2015balanced,lehmann2018balanced,lehmann2019geometric2,lehmann2019geometric}.
That enables us to interpret Klüners' counterexample \cite{kluners2005acounter}
to Malle's conjecture in terms of breaking thin maps of stacks.
A variation of this notion, weakly breaking thin morphism, is closely
related to Wood's \emph{fairness} \cite{wood2010onthe}. The slightly
more general case of classifying stacks $\B G$ with $G$ an étale finite
group scheme is studied in more detail in another paper \cite{darda2022torsors}
of the authors. It is proved there that our conjectures hold true
for $\B G$ with $G$ commutative.

In this paper, for the sake of simplicity, we focus on the case where
the base field is a number field. However, almost every argument and
statement in this paper can be easily translated to the case of an
arbitrary global field, provided that we restrict ourselves to \emph{tame}
DM stacks, that is, DM stacks having stabilizers of orders coprime
to the characteristic of the base field. 

The paper is organized as follows. In Section \ref{sec:Twisted-sectors-and},
we see the basics of twisted sectors and ages and define the residue
map, which plays an important role in the definition of our height
function. In Section \ref{sec:Line-bundles}, we recall the correspondence
between $\QQ$-line bundles of a stack and ones of the coarse moduli
space. We also recall the height function associated to a line bundle
with an adelic metric in a generalized context of stacks. In Section
\ref{sec:Raising-data}, we introduce the notion of raising data and
define the height function associated to the pair of a line bundle
and a raising datum. In Section \ref{sec:Fano-stacks}, we formulate
our first conjecture concerning Fano stacks. In Sections \ref{sec:Compatibility-with-products}
and \ref{sec:compare-coarse}, we show that this conjecture is compatible
with taking products of Fano stacks and with the Manin conjecture
for Fano varieties with canonical singularities. In Section \ref{sec:The-orbifold-pseudo-effective},
we introduce the notion of the orbifold Néron--Severi space and the
one of the orbifold pseudo-effective cone. In Section \ref{sec:a-and-b},
we define the $a$- and $b$-invariants of the pair of a line bundle
and a raising function and formulate a few versions of the Batyrev--Manin
conjecture for DM stacks. We also see how one of these conjecture
specializes to the first conjecture about Fano stacks as well as to
Malle's conjecture. We then reinterpret Klüners' counterexample
to Malle's conjecture in our language. Lastly, we introduce the
notion of $c$-comprehensiveness for finite groups, which gives a
sufficient condition that for a finite group $G$, the set of connected
$G$-torsors is disjoint from any weakly breaking thin subset. 

\subsection{Notation\label{subsec:Notation}}

Throughout the paper, we denote by $F$ a number field. A place of
$F$ is usually denoted by $v$. We denote the set of places by $M_{F}$.
The local field of $F$ at a place $v$ is denoted by $F_{v}$. If
$v$ is a finite place, the residue field of $F_{v}$ is denoted by
$\kappa_{v}$. We denote its cardinality by $q_{v}$. If $S$ is a
finite set of places including all infinite places, we denote the
ring of $S$-integers by $\cO_{S}$. We denote the absolute Galois
group of $F$ by $\Gamma_{F}$.

We denote by $\cX$ a DM stack over $F$ satisfying certain condition.
We denote the dimension of $\cX$ by $d$. We denote by $\cX(F)$
the groupoid of $F$-points, and by $\cX\langle F\rangle$ the set
of isomorphism classes of $F$-points.

For a scheme $T$, notation such as $\cX_{T}$ and $G_{T}$ mean that
these objects are \emph{defined over} $T$. They typically arise as
either the base change to $T$ from another scheme or the extension
to $T$ from an open subscheme of $T$. When $T=\Spec R$, we also
use notation like $\cX_{R}$ and $G_{R}$.

\subsection{Acknowledgments}

The authors would like to thank Aaron Landesman, Daniel Loughran, Julian Lyczak, Emmanuel Peyre and an anonymous referee for useful comments.
This work was supported by JSPS KAKENHI Grant Number JP18H01112. The
first named author was supported by JSPS Postdoctoral Fellowship for
Research in Japan. 

\section{Twisted sectors, residue maps and ages\label{sec:Twisted-sectors-and}}

\subsection{Twisted sectors}

Let us fix a number field $F$.
\begin{defn}
A \emph{nice stack over $F$ }means a DM stack $\cX$ over $F$ satisfying
the following conditions:
\begin{enumerate}
\item $\cX$ is separated, geometrically irreducible, and smooth over $F$,
\item a coarse moduli space of $\cX$ is a projective $F$-scheme, and 
\item $\cX$ is not isomorphic to $\Spec F$.
\end{enumerate}
\end{defn}

\begin{rem}
We exclude $\Spec F$ from nice stacks. One reason for this is that
$\Spec F$ has only one $F$-point and counting it is not very meaningful.
Another reason is that the orbifold Néron--Severi space of $\Spec F$
(see Definition \ref{def:orb-NS}) is the trivial/zero space and we
cannot define meaningful $a$- and $b$-invariants. Note that we do
not exclude the possibility that a nice stack may have $\Spec F$
as its coarse moduli space. In fact, such stacks are of great interest
in relation to Malle's conjecture.
\end{rem}

Throughout the rest of the paper, we denote by $\cX$ a nice stack,
unless otherwise specified. We choose a finite set of places, $S\subset M_{F}$,
including all infinite places such that there exists an irreducible,
smooth, proper, and tame model $\cX_{\cO_{S}}$ of $\cX$ over $\cO_{S}$.
Here ``tame'' means that for every point $x\in\cX_{\cO_{S}}(K)$
with $K$ a field, the automorphism group $\Aut(x)$ has order coprime
to the characteristic of $K$. We see that such a finite set $S$
exists.
\begin{defn}[Stacks of twisted $0$-jets]
Let $\mu_{l,F}$ and $\mu_{l,\cO_{S}}$ be the group schemes of $l$-th
roots of unity over $F$ and $\cO_{S}$, respectively. Let $\B\mu_{l,F}:=[\Spec F/\mu_{l,F}]$
and $\B\mu_{l,\cO_{S}}:=[\Spec\cO_{S}/\mu_{l,\cO_{S}}]$ be their
classifying stacks, where the notation $[-/-]$ means a quotient stack.
We define the \emph{stack of twisted 0-jets} of $\cX$ by 
\[
\cJ_{0}\cX:=\coprod_{l>0}\ulHom_{F}^{\rep}(\B\mu_{l,F},\cX).
\]
Here $\ulHom_{F}^{\rep}(-,-)$ means the Hom stack of representable
morphism (see \cite{olsson2006homstacks}). We also define 
\[
\cJ_{0}\cX_{\cO_{S}}:=\coprod_{l>0}\ulHom_{\cO_{S}}^{\rep}(\B\mu_{l,\cO_{S}},\cX_{\cO_{S}}).
\]
\end{defn}

Namely, for each $F$-scheme $T$, the fiber $(\cJ_{0}\cX)(T)$ over
$T$ is the groupoid of representable $T$-morphisms $\B\mu_{l,T}\to\cX_{T}$,
with the subscript $T$ meaning the base change to $T$. Similarly,
for $\cJ_{0}\cX_{\cO_{S}}$. From \cite{olsson2006homstacks}, $\cJ_{0}\cX$
and $\cJ_{0}\cX_{\cO_{S}}$ are DM stacks locally of finite type over
$F$ and $\cO_{S}$, respectively. From \cite[Lem.\ 6.5]{yasuda2020motivic},
they are, in fact, of finite type. 
\begin{rem}
\label{rem:tw-form-IX}If $\I\cX$ denotes the inertia stack of $\cX$,
then we have a (non-canonical) isomorphism $(\cJ_{0}\cX)\otimes_{F}\overline{F}\cong(\I\cX)\otimes_{F}\overline{F}$
(see \cite[Prop.\ 22]{yasuda2006motivic}). 
\end{rem}

\begin{rem}
Stacks of twisted 0-jets were used in \cite{yasuda2004twisted,yasuda2006motivic}
to develop motivic integration over DM stacks. The same notion appears
also in the orbifold Gromov-Witten theory \cite{abramovich2002algebraic,abramovich2008gromovwitten}.
In \cite{abramovich2008gromovwitten}, this is called the \emph{cyclotomic
inertia stack. }Motivic integration (or a variant of it, $p$-adic
integration) and Gromov-Witten theory for stacks may be regarded,
respectively, as local and geometric analogues of counting rational
points of stacks. This explains why looking at the stack of twisted
0-jets is natural in the context of the present paper.
\end{rem}

\begin{defn}[Sectors]
We call the trivial connected component $\ulHom_{F}(\B\mu_{1,F},\cX)$
of $\cJ_{0}\cX$, which is canonically isomorphic to $\cX$, the \emph{non-twisted
sector} and denote it again by $\cX$, abusing the notation. We call
the other connected components \emph{twisted sectors. }We call a connected
component of $\cJ_{0}\cX$ a \emph{sector, }whether twisted or non-twisted.
We denote by $\pi_{0}(\cJ_{0}\cX)$ (resp.~$\pi_{0}^{*}(\cJ_{0}\cX)$)
the set of sectors (resp.~twisted sectors). 
\end{defn}

\begin{lem}
The stack $\cJ_{0}\cX$ is smooth and proper over $F$. Moreover,
if $\cJ_{0}\cX_{\cO_{S}}$ is flat over $\cO_{S}$, then it is also
smooth and proper over $\cO_{S}$. 
\end{lem}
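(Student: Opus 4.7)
The strategy is to reduce both claims to the corresponding statements for the inertia stack $\I\cX$, using the comparison in Remark~\ref{rem:tw-form-IX}.

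First, over $F$: the stack $\cX$ is smooth by assumption and it is proper over $F$ because the coarse-moduli morphism $\cX\to X$ is proper and $X$ is projective. Since $\cX$ is separated DM, its diagonal $\Delta_\cX\colon \cX\to\cX\times_F\cX$ is finite and unramified, and smoothness of $\cX$ promotes this to finite étale. The inertia stack $\I\cX$ is the self-fiber product of $\Delta_\cX$, so $\I\cX\to\cX$ is also finite étale, and hence $\I\cX$ is smooth and proper over $F$. By Remark~\ref{rem:tw-form-IX}, $\cJ_0\cX\otimes_F\overline F\cong \I\cX\otimes_F\overline F$ is smooth and proper over $\overline F$. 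Smoothness and properness descend along the faithfully flat morphism $\Spec\overline F\to\Spec F$, giving the conclusion over $F$.

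For the statement over $\cO_S$, I would imitate the above after trivializing the roots of unity. Since $\cX_{\cO_S}$ is of finite type, the orders of stabilizers appearing are bounded by some $N$. By tameness, after possibly enlarging $S$ we may assume $\cO_S':=\cO_S[\zeta_N]$ is finite étale over $\cO_S$. Over $\cO_S'$, choosing primitive $l$-th roots of unity yields an identification $\mu_{l,\cO_S'}\cong(\ZZ/l\ZZ)_{\cO_S'}$ for each relevant $l$, and hence a (non-canonical) isomorphism $\cJ_0\cX_{\cO_S'}\cong \I\cX_{\cO_S'}$: a representable map $\B\mu_{l,\cO_S'}\to\cX_{\cO_S'}$ corresponds to a point of $\cX_{\cO_S'}$ together with an automorphism of order dividing $l$. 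By the tameness of the model, $\I\cX_{\cO_S}\to\cX_{\cO_S}$ is finite étale, so $\I\cX_{\cO_S}$ is smooth and proper over $\cO_S$, and therefore so is $\I\cX_{\cO_S'}$ over $\cO_S'$. Faithfully flat descent along $\Spec\cO_S'\to\Spec\cO_S$ then transfers smoothness and properness to $\cJ_0\cX_{\cO_S}$.

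I expect the principal technical step to be making the non-canonical identification $\cJ_0\cX_{\cO_S'}\cong \I\cX_{\cO_S'}$ precise in families and checking that it is indeed an isomorphism of $\cO_S'$-stacks compatible with the base change $\cJ_0\cX_{\cO_S}\otimes_{\cO_S}\cO_S'$; granted tameness, this is a standard but somewhat delicate bookkeeping exercise. The flatness hypothesis plays the role of ensuring that $\cJ_0\cX_{\cO_S}$ has no pathological vertical components outside the generic fiber, so that descent along $\Spec\cO_S'\to\Spec\cO_S$ applies uniformly to the entire stack and, equivalently, that the fiberwise criterion ``flat plus smooth fibers implies smooth'' can be invoked as an alternative verification.
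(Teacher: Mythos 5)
There is a genuine gap. You assert ``Since $\cX$ is separated DM, its diagonal $\Delta_\cX\colon\cX\to\cX\times_F\cX$ is finite and unramified, and smoothness of $\cX$ promotes this to finite étale,'' and then conclude that $\I\cX\to\cX$ is finite étale. This is false: the diagonal of a smooth, separated DM stack is finite and unramified, but in general it is not flat, hence not étale, and so $\I\cX\to\cX$ is not étale even in characteristic zero. The simplest illustration is $\cX=[\AA^1/\mu_2]$ (or, for a proper example within the paper's conventions, $\cX=\cP(1,1,2)$): the fiber of $\I\cX\to\cX$ over the stacky point has two geometric points while the generic fiber has one, so the morphism cannot be flat. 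Smoothness of $\I\cX$ over the base is therefore not a consequence of étaleness of $\I\cX\to\cX$; it has to be proved via the local presentation, namely that étale-locally $\cX\cong[U/G]$ with $U$ smooth and $G$ finite tame, so that $\I[U/G]\cong[\coprod_{g\in G}U^g/G]$, and each fixed locus $U^g$ is smooth because $G$ acts tamely. That local description is exactly the reference the paper invokes ([tag 0374]). The same error recurs in your $\cO_S$ step, where you again claim $\I\cX_{\cO_S}\to\cX_{\cO_S}$ is finite étale by tameness; tameness gives smoothness of $\I\cX_{\cO_S}$ via the fixed-locus description, not étaleness of the inertia morphism.

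As to strategy: the paper reduces the first assertion to the second by generic flatness (enlarging $S$), then applies the fiberwise criterion that a flat morphism with smooth geometric fibers is smooth, identifying each geometric fiber $\cJ_0\cX_K$ with $\I\cX_K$ and using the local description above for smoothness and the properness of the diagonal for properness. You instead start over $F$ and try to descend along $\Spec\overline F\to\Spec F$, and similarly over a root-of-unity extension $\cO_S'\supset\cO_S$, which usefully sidesteps any Galois-equivariance bookkeeping for the comparison $\cJ_0\cX\cong\I\cX$. That descent idea is sound (smoothness and properness both descend along faithfully flat quasi-compact base change), but it does not repair the erroneous étale claim, which you need in both halves. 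If you replace it by the fixed-locus smoothness argument, either approach goes through; you also note in passing the flat-plus-smooth-fibers criterion as an ``alternative verification''—making that the main argument is in fact precisely what the paper does.
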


\begin{proof}
From the generic flatness, if we enlarge $S$, we can make $\cJ_{0}\cX_{\cO_{S}}$
flat over $\cO_{S}$. Thus, the first assertion follows from the second.
To show the second assertion, suppose that $\cJ_{0}\cX_{\cO_{S}}$
is flat over $\cO_{S}$. It suffices to show that for each geometric
point $\Spec K\to\Spec\cO_{S}$, the fiber $\cJ_{0}\cX_{K}$ is smooth
and proper over $K$. The $K$-stack $\cJ_{0}\cX_{K}$ is isomorphic
to the inertia stack $\I\cX_{K}$ of $\cX_{K}$, which is well-known
to be smooth and proper. The properness of $\I\cX_{K}$ follows from
the fact that the natural morphism $\I\cX_{K}\to\cX_{K}$ is identified
with the (say first) projection
\[
\cX_{K}\times_{\Delta,\cX_{K}\times_{K}\cX_{K},\Delta}\cX_{K}\to\cX_{K},
\]
which is proper. The smoothness follows from the local description
of the inertia stack \cite[tag 0374]{stacksprojectauthors2022stacksproject}.
\end{proof}
\begin{cor}
\label{cor:bij-Pi-0}If $\cJ_{0}\cX_{\cO_{S}}$ is flat over $\cO_{S}$,
then natural map $\pi_{0}(\cJ_{0}\cX)\to\pi_{0}(\cJ_{0}\cX_{\cO_{S}})$
is bijective. Here $\pi_{0}(-)$ denotes the set of connected components.
\end{cor}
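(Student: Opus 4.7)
The plan is to apply Stein factorization to the smooth proper flat morphism $f \colon \cJ_{0}\cX_{\cO_{S}} \to \Spec \cO_{S}$ (smooth and proper by the preceding lemma, flat by hypothesis) and thereby reduce the claim to the well-known fact that the connected components of a finite \'etale $\cO_{S}$-algebra are in bijection with those of its generic fiber.

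First I would set $A := f_{*}\cO_{\cJ_{0}\cX_{\cO_{S}}}$ and argue that $A$ is finite \'etale over $\cO_{S}$. Properness and flatness of $f$, combined with cohomology and base change, show that $A$ is a coherent $\cO_{S}$-algebra whose formation commutes with arbitrary base change. For any geometric point $\Spec \bar\kappa \to \Spec \cO_{S}$, the fiber $A \otimes \bar\kappa$ is then $H^{0}(\cJ_{0}\cX_{\bar\kappa},\cO)$. Since each connected component of the smooth proper DM stack $\cJ_{0}\cX_{\bar\kappa}$ is geometrically connected and smooth proper over $\bar\kappa$, its $H^{0}$ equals $\bar\kappa$, so $A \otimes \bar\kappa$ is a product of copies of $\bar\kappa$ indexed by $\pi_{0}(\cJ_{0}\cX_{\bar\kappa})$. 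Hence $A$ is finite flat over $\cO_{S}$ with reduced (indeed \'etale) geometric fibers, and therefore finite \'etale. By construction, the Stein factorization $\cJ_{0}\cX_{\cO_{S}} \to \Spec A \to \Spec \cO_{S}$ has geometrically connected first morphism, giving $\pi_{0}(\cJ_{0}\cX_{\cO_{S}}) = \pi_{0}(\Spec A)$; the same analysis over the generic point yields $\pi_{0}(\cJ_{0}\cX) = \pi_{0}(\Spec A_{F})$, where $A_{F} := A \otimes_{\cO_{S}} F$, and under these identifications the map in question becomes the usual generic-fiber map on components.

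Next I would show that for any finite \'etale $\cO_{S}$-algebra $A$, the map $\pi_{0}(\Spec A_{F}) \to \pi_{0}(\Spec A)$ is a bijection. Decomposing $A = \prod_{i} A_{i}$ into its connected components, each $A_{i}$ is a connected finite \'etale $\cO_{S}$-algebra, hence regular, connected, Noetherian of dimension one, i.e.\ a Dedekind domain. Therefore $A_{i} \otimes_{\cO_{S}} F$ is a field, in particular connected, and the decomposition $A_{F} = \prod_{i} A_{i,F}$ realizes the desired bijection.

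The main obstacle is the first step: verifying that $A$ is \'etale (not merely finite flat) over $\cO_{S}$. This hinges on the smoothness of $f$ (to make the geometric fibers of $\Spec A$ reduced) together with cohomology and base change for proper flat morphisms of DM stacks; once this is settled, the remainder of the argument is elementary commutative algebra.
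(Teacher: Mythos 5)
Your Stein-factorization approach is genuinely different from the paper's argument (which passes to a Galois extension $K/F$ trivializing the geometric components, observes that regularity of $\cJ_0\cX_{\cO_T}$ forces the closures of components of $\cJ_0\cX_K$ to be disjoint, and then takes $\Gal(K/F)$-orbits). However, as written your proof has a gap at its crux: the assertion that ``properness and flatness of $f$, combined with cohomology and base change, show that $A=f_*\cO$ is a coherent $\cO_S$-algebra whose formation commutes with arbitrary base change'' is not a consequence of those hypotheses alone. For $R^0f_*$ the relevant statement (Grauert's theorem) requires that $s\mapsto\dim_{\kappa(s)}H^0(\cJ_0\cX_{\kappa(s)},\cO)$ be \emph{constant}, i.e.\ that the number of geometrically connected components of the fibers be constant across $\Spec\cO_S$. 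That constancy is essentially the content of what you are trying to establish, so invoking it here is circular; without it, $A\otimes\bar\kappa$ need not equal $H^0(\cJ_0\cX_{\bar\kappa},\cO)$, and the étaleness of $A$ does not follow.

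The good news is that your strategy can be repaired without étaleness. You only need two things: (i) $A$ is a finite \emph{normal} $\cO_S$-algebra, which holds because the Stein factorization of a proper morphism from a normal (here smooth, hence regular) stack has normal target; and (ii) $A\otimes_{\cO_S}F=H^0(\cJ_0\cX,\cO)$, which is just \emph{flat} base change to the generic point and requires no properness or constancy hypotheses. Then $A=\prod_iA_i$ with each $A_i$ a Dedekind domain, each $A_{i,F}$ a number field, and the bijection $\pi_0(\Spec A_F)\to\pi_0(\Spec A)$ follows exactly as you say; combining with $\pi_0(\cJ_0\cX_{\cO_S})=\pi_0(\Spec A)$ and $\pi_0(\cJ_0\cX)=\pi_0(\Spec A_F)$ from the Stein factorizations closes the argument. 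In particular you do not need $A$ to be unramified, only normal. You did flag the étaleness of $A$ as the ``main obstacle,'' which is the right instinct; the fix is to route around it rather than through it.
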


\begin{proof}
Let $K/F$ be a finite Galois extension such that every connected
component of $\cJ_{0}\cX_{K}$ is geometrically connected. Let $G$
be its Galois group. We have a natural $G$-action on $\pi_{0}(\cJ_{0}\cX_{K})$
and can identify $\pi_{0}(\cJ_{0}\cX)$ with the set of $G$-orbits
in $\pi_{0}(\cJ_{0}\cX_{K})$. 

Let $\cO_{T}$ be the integral closure of $\cO_{S}$ in $K$. Since
$\cJ_{0}\cX_{\cO_{T}}$ is smooth over $\cO_{T}$, its irreducible
components are disjoint to each other. This shows that connected components
$\cY$ of $\cJ_{0}\cX_{K}$ has closures $\overline{\cY}$ in $\cJ_{0}\cX_{\cO_{T}}$
which are disjoint with one another. It follows that $\pi_{0}(\cJ_{0}\cX_{K})$
and $\pi_{0}(\cJ_{0}\cX_{\cO_{T}})$ are canonically isomorphic $G$-sets.
Accordingly, their orbit sets are identified, which implies the corollary.
\end{proof}
\begin{rem}
For a more geometric (rather than algebraic) treatise on twisted sectors,
we refer the reader to \cite{adem2007orbifolds}.
\end{rem}

It is sometimes more useful to express $\cJ_{0}\cX$ by using the
pro-finite group scheme $\widehat{\mu}_{F}:=\projlim\mu_{l,F}$ instead
of finite ones $\mu_{l,F}$, $l\in\ZZ_{>0}$. Note that since transition
morphisms of the projective system $\mu_{l,F}$, $l\in\ZZ_{>0}$ are
all affine morphisms, the projective limit $\projlim\mu_{l,F}$ exists
as a scheme (see \cite[Prop.\ 8.2.3]{grothendieck1966elements} or
\cite[tag 01YX]{stacksprojectauthors2022stacksproject}) and has a
natural structure of a group scheme. In other words, the limit $\projlim\mu_{l,F}$
as a functor from $F$-schemes to groups is (represented by) a group
scheme. Similarly for $\widehat{\mu}_{\cO_{S}}:=\projlim\mu_{l,\cO_{S}}$. 

To obtain such an description of $\cJ_{0}\cX$, we need to slightly
generalize the Hom stack as follows. Let $R$ be a ring, let $\Sch_{R}$
be the category of $R$-schemes and let $\cU$ and $\cV$ be two categories
fibered in groupoids over $\Sch_{R}$. We define $\ulHom_{F}(\cU,\cV)$
to be the category fibered in groupoids over $\Sch_{R}$ as follows:
for each $R$-scheme $T$, the fiber $\ulHom_{R}(\cU,\cV)(T)$ is
\[
\Hom_{T}(\cU\times_{\Spec R}T,\cV\times_{\Spec R}T),
\]
the groupoid of $T$-morphisms $\cU\times_{\Spec R}T\to\cV\times_{\Spec R}T$.
\begin{lem}
\label{lem:KXG}Let $G$ be a group scheme over $R$. Let $\cK_{\cX,G}$
be the fibered category over $\Sch_{R}$ such that for each $F$-scheme
$T$, the fiber $\cK_{\cX,G}(T)$ is the groupoid of pairs $(x,\phi)$,
where $x$ is an object of $\cX(T)$ and $\phi$ is a homomorphism
$G_{T}^{\op}\to\ulAut_{T}(x)$ of group schemes over $T$.\footnote{We follow the convention that a group acts on a torsor \emph{from
left}. Then, the automorphism group of a $G$-torsor is identified
with a subgroup of the opposite group $G^{\op}$. } Then 
\[
\ulHom_{F}(\B G,\cX)\cong\cK_{\cX,G}.
\]
\end{lem}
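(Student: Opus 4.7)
The plan is to exhibit the claimed equivalence of fibered categories by constructing explicit quasi-inverse functors in both directions. The geometric content is that a morphism $\B G_T \to \cX_T$ is completely determined by its value on the trivial $G$-torsor together with the induced map on automorphism sheaves; conversely, any such data extends uniquely to a morphism by means of the twisted-product (descent) construction, because every $G$-torsor is fppf-locally trivial.

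For the forward functor $\Phi\colon \ulHom_R(\B G,\cX)\to \cK_{\cX,G}$, I fix an $R$-scheme $T$ and a $T$-morphism $f\colon \B G_T\to \cX_T$. The trivial $G_T$-torsor is an object $u\in (\B G_T)(T)$, and I set $x:=f(u)\in \cX(T)$. Automorphisms of the trivial $G$-torsor (with its canonical left action) are precisely right translations, and their composition law is the opposite of multiplication in $G$, giving a canonical identification $\ulAut_T(u)\cong G_T^{\op}$ of $T$-group schemes. Because $f$ is a morphism of stacks, it induces a homomorphism $\phi\colon G_T^{\op}\cong \ulAut_T(u)\to \ulAut_T(x)$, and the assignment $f\mapsto (x,\phi)$ is visibly functorial in $T$. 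For the inverse $\Psi\colon \cK_{\cX,G}\to \ulHom_R(\B G,\cX)$, given $(x,\phi)$ over $T$ and a $T$-scheme $S$ with a $G$-torsor $P\to S$, I use $\phi$ to equip $x_S$ with a descent datum along the fppf cover $P\to S$ (the isomorphism $P\times_S P\cong G_S\times P$ combined with $\phi$ gives the required cocycle in $\ulAut(x_P)$); since $\cX$ is a stack this descends to an object of $\cX(S)$, and the construction is functorial in $(P\to S)$, giving a morphism $\B G_T\to \cX_T$.

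It remains to check that $\Phi$ and $\Psi$ are quasi-inverse. One direction, $\Phi\circ \Psi\simeq \id$, is essentially tautological: the descent construction applied to the trivial torsor returns $x_S$ itself, and the induced action on $\ulAut(u)\cong G^{\op}$ is exactly $\phi$ by construction. The other direction, $\Psi\circ \Phi\simeq \id$, amounts to the observation that a morphism $f\colon \B G_T\to \cX_T$ is determined by the pair $(f(u),f_*)$ because $T\to \B G_T$ is an fppf cover; the explicit 2-isomorphism is obtained by matching both morphisms on the trivial torsor and extending by the universal descent property of $\cX$. The main technical obstacle is bookkeeping the left/right and opposite conventions so that the direction of the homomorphism $\phi\colon G_T^{\op}\to \ulAut_T(x)$ emerges correctly from the automorphisms of the trivial torsor, and verifying that the descent and twisting operations are strictly 2-natural in $T$ so that the resulting equivalence is an equivalence of categories fibered over $\Sch_R$ rather than merely a pointwise equivalence of groupoids.
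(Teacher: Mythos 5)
Your proposal is correct and follows essentially the same route as the paper: construct explicit quasi-inverse functors, with the forward direction given by evaluation on the trivial $G$-torsor (identifying $\ulAut_T(\text{trivial torsor})\cong G_T^{\op}$ and composing with $f_*$). The only cosmetic difference is in the inverse functor: the paper defines the morphism on the prestack associated to the groupoid scheme $G_T^{\op}\rightrightarrows T$ by giving its values on objects and arrows, then passes to the stackification $\B G_T$, whereas you phrase the same construction as an explicit descent of $x_P$ along an arbitrary $G$-torsor $P\to S$ using the cocycle supplied by $\phi$; these are the same construction in different language, since the universal property of stackification is realized precisely by descent. One small wording issue: the object being descended should be $x_P$ (the pullback to the cover $P$), not $x_S$, which is already defined over $S$ and would leave nothing to descend.
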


\begin{proof}
Let $T$ be an $R$-scheme. We construct functors between the groupoids
$\Hom_{T}(\B G_{T},\cX_{T})$ and $\cK_{\cX,G}(T)$ which are quasi-inverses
of each other. 

Construction of $\Hom_{T}(\B G_{T},\cX_{T})\to\cK_{\cX}(T)$: Let
$f\colon\B G_{T}\to\cX_{T}$ be a $T$-morphism. Suppose that the
trivial torsor $\tau\colon G\times T\to T$, which is an object of
$(\B G_{T})(T)$, maps to $\alpha\in\cX_{T}(T)=\cX(T)$ by $f$. The
morphism $f$ induces a morphism of group schemes over $T$,
\[
G_{T}^{\op}\xrightarrow{\sim}\ulAut_{T}(\tau)\to\ulAut_{T}(\alpha).
\]

Construction of $\cK_{\cX}(T)\to\Hom_{T}(\B G_{T},\cX_{T})$: Let
$\alpha$ be an object of $\cX(T)$ and $\phi\colon G_{T}^{\op}\to\ulAut_{T}(\alpha)$
a homomorphism. Let $[G_{T}^{\op}\rightrightarrows T]'$ (resp.~$[G_{T}^{\op}\rightrightarrows T]$)be
the prestack (resp.~stack) associated to the trivial groupoid scheme
$G_{T}^{\op}\rightrightarrows T$. We define a morphism $[G_{T}^{\op}\rightrightarrows T]'\to\cX_{T}$
so that a morphism $S\to T$, regarded as an object in $[T\rightrightarrows G_{T}^{\op}]'(S)$,
maps to $\alpha_{S}$ and a morphism $S\to G_{T}^{\op}$, regarded
as a morphism in $[T\rightrightarrows G_{T}^{\op}]'(S)$, maps to
the automorphism of $\alpha_{S}$ corresponding the composition $S\to G_{T}^{\op}\xrightarrow{\phi}\ulAut_{T}(\alpha)$.
This induces a morphism of stacks $\B G_{T}=[T\rightrightarrows G_{T}^{\op}]\to\cX_{T}$. 

It is easy to see that these functors between $\Hom_{T}(\B G_{T},\cX_{T})$
and $\cK_{\cX,G}(T)$ are quasi-inverses of each other, and induce
the claimed isomorphism of the lemma.
\end{proof}
\begin{prop}
\label{prop:J0-mu-hat}We have $\cJ_{0}\cX\cong\ulHom_{F}(\B\widehat{\mu}_{F},\cX)$
and $\cJ_{0}\cX_{\cO_{S}}\cong\ulHom_{\cO_{S}}(\B\widehat{\mu}_{\cO_{S}},\cX_{\cO_{S}})$. 
\end{prop}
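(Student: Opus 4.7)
The plan is to translate both sides of the claimed isomorphism into groupoids of explicit pairs via Lemma \ref{lem:KXG}, and then match them using that $\widehat{\mu}$ is profinite while stabilizers on a DM stack are finite.

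Applying Lemma \ref{lem:KXG} with $G=\widehat{\mu}_F$, the fibered category $\ulHom_F(\B\widehat{\mu}_F,\cX)$ is equivalent to the one whose fiber over an $F$-scheme $T$ is the groupoid of pairs $(x,\phi)$ with $x\in\cX(T)$ and $\phi\colon\widehat{\mu}_T^{\op}\to\ulAut_T(x)$ a homomorphism of $T$-group schemes. Applying the same lemma with $G=\mu_{l,F}$, a representable morphism $\B\mu_{l,T}\to\cX_T$ corresponds to a pair $(x,\phi_l)$ with $\phi_l\colon\mu_{l,T}^{\op}\to\ulAut_T(x)$ a closed immersion, since representability of $\B G\to\cX$ translates to injectivity of the induced map on automorphism sheaves.

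Next I would show that any such $\phi$ factors canonically as $\widehat{\mu}_T^{\op}\twoheadrightarrow\mu_{m,T}^{\op}\hookrightarrow\ulAut_T(x)$ for a locally constant function $m\colon T\to\ZZ_{>0}$. Since $\cX$ is DM and tame, $\ulAut_T(x)\to T$ is finite étale. Because $\widehat{\mu}_T=\projlim_l\mu_{l,T}$ is $\Spec$ of a filtered colimit of Hopf algebras, any morphism from $\widehat{\mu}_T$ to the finite-type scheme $\ulAut_T(x)$ factors through some $\mu_{m,T}$; choosing $m$ minimal makes the induced map $\mu_{m,T}^{\op}\to\ulAut_T(x)$ a closed immersion, and its image is a finite étale subgroup scheme whose degree, hence $m$, is locally constant on $T$. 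This yields an open--closed decomposition $T=\coprod_{m>0}T_m$ (almost all pieces empty) together with a representable morphism $\B\mu_{m,T_m}\to\cX_{T_m}$ on each piece, which is precisely a $T$-point of $\coprod_{l>0}\ulHom_F^{\rep}(\B\mu_{l,F},\cX)$.

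The inverse construction composes each closed immersion $\mu_{m,T_m}^{\op}\hookrightarrow\ulAut_{T_m}(x|_{T_m})$ with the canonical surjection $\widehat{\mu}_{T_m}^{\op}\to\mu_{m,T_m}^{\op}$ and glues over the decomposition to recover a global $\phi$; the two constructions are quasi-inverse and functorial in $T$, producing the stacky isomorphism. The proof of the $\cO_S$-version is identical, using the tame model $\cX_{\cO_S}$ to guarantee étaleness of automorphism group schemes. The main delicate point will be the local constancy of $m$, which rests essentially on the étaleness of $\ulAut_T(x)$ obtained from the DM plus tameness hypotheses and would fail for a general Artin stack.
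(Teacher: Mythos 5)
Your proof is correct and is essentially the same as the paper's: both arguments translate morphisms from $\B\widehat{\mu}$ into homomorphisms $\widehat{\mu}_T\to\ulAut_T(x)$ via Lemma \ref{lem:KXG}, and then factor these locally through some finite $\mu_{m,T}\rightarrowtail\ulAut_T(x)$ to recover a twisted $0$-jet. The only cosmetic difference is that you systematically apply Lemma \ref{lem:KXG} to both sides of the isomorphism and work entirely with groupoids of pairs, whereas the paper constructs the forward morphism $\cJ_0\cX\to\ulHom_F(\B\widehat{\mu}_F,\cX)$ directly by post-composing with the projection $\B\widehat{\mu}_T\to\B\mu_{l,T}$ and uses the lemma only for the inverse; you are also slightly more explicit than the paper about why the factorization through a finite quotient is possible (limit of Hopf algebras versus finite presentation of $\ulAut_T(x)$) and why the resulting degree $m$ is locally constant via étaleness of the automorphism group scheme.
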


\begin{proof}
For an $F$-scheme $T$, a representable morphism $\B\mu_{l,T}\to\cX$
induces the (not representable) morphism 
\[
\B\widehat{\mu}_{T}\to\B\mu_{l,T}\to\cX.
\]
This induces a morphism $\cJ_{0}\cX\to\ulHom_{F}(\B\widehat{\mu}_{F},\cX)$.
Conversely, for a morphism $\B\widehat{\mu}_{T}\to\cX$, we have the
associated morphism
\[
\widehat{\mu}_{T}\to\ulAut_{T}(x)\quad(x\in\cX(F))
\]
from Lemma \ref{lem:KXG}. Each point $t\in T$ has an open neighborhood
$U\subset T$ such that the induced morphism $\widehat{\mu}_{U}\to\ulAut_{U}(x_{U})$
canonically factors into the composition
\[
\widehat{\mu}_{U}\to\mu_{l,U}\rightarrowtail\ulAut_{U}(x_{U})
\]
of the canonical surjection $\widehat{\mu}_{U}\to\mu_{l,U}$ and an
injection $\mu_{l,U}\rightarrowtail\ulAut_{U}(x_{U})$. Thus, we get
an object of $\cJ_{0}\cX$ over $U$. They glue together to give an
object over $T$. Thus, we get a morphism $\ulHom_{F}(\B\widehat{\mu}_{F},\cX)\to\cJ_{0}\cX$.
We see that these morphisms between $\cJ_{0}\cX$ and $\ulHom_{F}(\B\widehat{\mu}_{F},\cX)$
are quasi-inverses to each other, which shows the first isomorphism
of the proposition. The second isomorphism is proved in the same way.
\end{proof}
\begin{rem}
The stack $\B\widehat{\mu}_{F}$ is neither a DM or Artin stack.
\end{rem}

\begin{rem}
If $e$ is a sufficiently factorial positive integer, then we can
describe $\cJ_{0}\cX=\ulHom_{F}(\B\widehat{\mu}_{F},\cX)$ also as
$\ulHom_{F}(\B\mu_{e,F},\cX)$. 
\end{rem}

\begin{cor}
\label{cor:map-J0}For a (not necessarily representable) morphism
$f\colon\cY\to\cX$ of nice stacks, we have a natural morphism $\cJ_{0}\cY\to\cJ_{0}\cX$
and a natural map $\pi_{0}(\cJ_{0}\cY)\to\pi_{0}(\cJ_{0}\cY)$. Moreover,
if $f_{\cO_{S}}\colon\cY_{\cO_{S}}\to\cX_{\cO_{S}}$ is a model of
$f$, then we have also a natural morphism $\cJ_{0}\cY_{\cO_{S}}\to\cJ_{0}\cX_{\cO_{S}}$.
\end{cor}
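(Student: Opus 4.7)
The strategy is to bypass the representability constraint in the defining formula $\cJ_{0}\cX=\coprod_{l>0}\ulHom_{F}^{\rep}(\B\mu_{l,F},\cX)$ by working instead with the identification $\cJ_{0}\cX\cong\ulHom_{F}(\B\widehat{\mu}_{F},\cX)$ provided by Proposition \ref{prop:J0-mu-hat}. In this reformulation no representability condition is imposed, so post-composition with $f$ is unproblematic even though $f$ itself need not be representable.

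Concretely, I would first use Proposition \ref{prop:J0-mu-hat} to rewrite both $\cJ_{0}\cY$ and $\cJ_{0}\cX$ as $\ulHom_{F}(\B\widehat{\mu}_{F},\cY)$ and $\ulHom_{F}(\B\widehat{\mu}_{F},\cX)$. Then for every $F$-scheme $T$ and every $T$-morphism $g\colon\B\widehat{\mu}_{T}\to\cY_{T}$, the composition $f_{T}\circ g\colon\B\widehat{\mu}_{T}\to\cX_{T}$ yields an object of $\ulHom_{F}(\B\widehat{\mu}_{F},\cX)(T)$; this assignment is functorial in $T$ and in isomorphisms of $g$, so it defines a morphism of fibered categories, hence the desired natural morphism $\cJ_{0}\cY\to\cJ_{0}\cX$. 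The induced map on connected components $\pi_{0}(\cJ_{0}\cY)\to\pi_{0}(\cJ_{0}\cX)$ (the displayed target in the statement is a typo) is then just the functoriality of $\pi_{0}$ applied to the associated continuous map of underlying topological spaces. The integral statement is proved word-for-word the same way, using the second isomorphism of Proposition \ref{prop:J0-mu-hat} together with the model $f_{\cO_{S}}$ in place of $f$.

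The only genuine subtlety, and the reason the statement is nontrivial, is the non-representability of $f$. If one tried to build the map summand-by-summand from the definition $\cJ_{0}\cX=\coprod_{l}\ulHom^{\rep}_{F}(\B\mu_{l,F},\cX)$, a representable morphism $\B\mu_{l,T}\to\cY_{T}$ could compose with $f$ to give a \emph{non-representable} morphism $\B\mu_{l,T}\to\cX_{T}$ whenever some elements of $\mu_{l}$ act trivially on the image in $\cX$ while acting non-trivially on the source object in $\cY$. The passage to $\B\widehat{\mu}$ handles this automatically: as in the proof of Proposition \ref{prop:J0-mu-hat}, the induced homomorphism $\widehat{\mu}_{T}\to\ulAut_{T}(f(y))$ factors uniquely, locally on $T$, through an injection $\mu_{l',T}\hookrightarrow\ulAut_{T}(f(y))$ for some $l'\mid l$, which is precisely the summand of $\cJ_{0}\cX$ in which the image lands. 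Thus the $\widehat{\mu}$-viewpoint is not a cosmetic reformulation but the tool that makes the construction go through in the non-representable case.
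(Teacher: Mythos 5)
Your proof is correct and takes essentially the same approach as the paper's: both use the identification $\cJ_{0}\cX\cong\ulHom_{F}(\B\widehat{\mu}_{F},\cX)$ from Proposition \ref{prop:J0-mu-hat} and define the morphism by post-composition with $f$. Your added explanation of why the $\widehat{\mu}$-reformulation is the right tool (since composing with a non-representable $f$ can destroy representability of $\B\mu_{l,T}\to\cX_{T}$, while the local factorization through $\mu_{l'}$ with $l'\mid l$ recovers the correct summand) is a sound and useful clarification, though the paper's proof leaves it implicit.
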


\begin{proof}
The natural morphism $\cJ_{0}\cY\to\cJ_{0}\cX$ is defined by sending
a morphism $h\colon\B\widehat{\mu}_{T}\to\cY$ to the composition
$f\circ h\colon\B\widehat{\mu}_{T}\to\cY\to\cX$. This morphism induces
a map $\pi_{0}(\cJ_{0}\cY)\to\pi_{0}(\cJ_{0}\cY)$. The morphism $\cJ_{0}\cY_{\cO_{S}}\to\cJ_{0}\cX_{\cO_{S}}$
is similarly defined.
\end{proof}
\begin{example}
\label{exa:BG tw sectors}Let $G$ be a finite étale group scheme
over $F$ and let $\cX=\B G$. Let $\overline{F}$ be an algebraic
closure of $F$. Then, we have identifications
\begin{align*}
|\cJ_{0}\cX_{\overline{F}}| & =\pi_{0}(\cJ_{0}\cX_{\overline{F}})\\
 & =\Hom(\widehat{\mu}^{\op}(\overline{F}),G^{\op}(\overline{F}))/G^{\op}(\overline{F})\\
 & =\Hom(\widehat{\mu}(\overline{F}),G(\overline{F}))/G(\overline{F}),
\end{align*}
where $|-|$ denotes the point set of a stack, the $G(\overline{F})$-action
on $\Hom(\widehat{\mu}(\overline{F}),G(\overline{F}))$ is induced
by the conjugate action of $G(\overline{F})$ on itself, and similarly
for the $G^{\op}(\overline{F})$-action on $\Hom(\widehat{\mu}^{\op}(\overline{F}),G^{\op}(\overline{F}))$.
If $e$ denotes the exponent of $G(\overline{F})$, then we also have
\[
\Hom(\widehat{\mu}(\overline{F}),G(\overline{F}))/G(\overline{F})=\Hom(\mu_{e}(\overline{F}),G(\overline{F}))/G(\overline{F}).
\]
The last set is denoted by $G_{*}$ in \cite{darda2022torsors}. These
identifications are equivariant for natural $\Gamma_{F}$-actions.
We can then identify $\pi_{0}(\cJ_{0}\cX)$ with the set of $\Gamma_{F}$-orbits
in one of these sets. If $G$ is a constant group, then $\pi_{0}(\cJ_{0}\cX)$
is identified with the set of $F$-conjugacy classes . If $G=\mu_{l}$,
then $\pi_{0}(\cJ_{0}\cX)$ is identified with $\ZZ/l\ZZ$ (regardless
of the field $F$). For these facts, see \cite{darda2022torsors}
(cf.~\cite[Th.\ 5.4]{wood2015massformulas}, \cite[Prop.\ 4.5]{yasuda2014densities},
\cite[Prop.\ 8.5]{yasuda2015maninsconjecture}).
\end{example}

\subsection{Residue maps}

Let $v$ be a finite place of $F$ with $v\notin S$
and let $L$ be the maximal unramified extension of $F_{v}$. For
a positive integer $l$ coprime to the residue characteristic of $v$,
let $L_{l}$ denote the unique degree-$l$ extension of $L$, which is given by adjoining the $l$-th roots of a uniformizer, and let $\cO_{L_l}$ denote its integer ring. There exists a natural action of $\mu_{l,\cO_L}$ on $\Spec \cO_{L_l}$, which induces the quotient stack $[\Spec\cO_{L_{l}}/\mu_{l,\cO_L}]$. 
Namely, this is the $l$-th root stack associated to $\Spec\cO_{L_{l}} $ and its closed point regarded as a Cartier divisor.
We can regard $\Spec L$ as an open substack
of $[\Spec\cO_{L_{l}}/\mu_{l,\cO_L}]$ by the canonical open immersion.

\begin{lem}
\label{lem:ext_tw_arc}
With the above notation, 
for an $L$-point $x\colon\Spec L\to\cX$, there exists a unique positive integer $l$ and a unique representable
morphism of the form,
\[
\widetilde{x}\colon[\Spec\cO_{L_{l}}/\mu_{l,\cO_L}]\to\cX_{\cO_{S}}
\]
which extends $x$. 
\end{lem}

\begin{proof}
Let $X_{\cO_{S}}$ be the coarse moduli space of $\cX_{\cO_{S}}$.
The induced $L$-point, $\overline{x}\colon\Spec L\to X$, uniquely
extends to an integral point $\Spec\cO_{L}\to X_{\cO_{L}}$, which
is a closed immersion. Replacing $X_{\cO_{L}}$ with an open neighborhood
of this integral point, we may reduce the problem to the case where
$X_{\cO_{L}}$ is affine, say $X_{\cO_{L}}=\Spec R$. Let $\widehat{X}_{\cO_{L}}$
be the formal completion $\Spec\widehat{R}$ (not as a formal scheme
but as a scheme) of $X_{\cO_{L}}$ along this integral point and let
$\widehat{\cX}_{\cO_{S}}:=\widehat{X}_{\cO_{S}}\times_{X_{\cO_{S}}}\cX$.
We have a natural morphism $\Spec L\to\widehat{\cX}_{\cO_S}$. Let $\cD$
be the relative normalization of $\widehat{\cX}_{\cO_S}$ in $\Spec L$.
Note that the relative normalization for schemes is compatible with
étale base change \cite[tag 0ABP]{stacksprojectauthors2022stacksproject},
and hence generalizes to DM stacks. 
Then, $\cD$ is a regular, irreducible
and one-dimensional DM stack which contains $\Spec L$ as an open
dense substack and has $\Spec\cO_{L}$ as the coarse moduli space.
We claim that $\cD$ is of the form $[\Spec\cO_{L_{l}}/\mu_{l,\cO_L}]$
for some positive integer $l$ coprime to the residue characteristic
of $v$. Indeed, there exists an étale atlas of $\cD$ of the form
\[
\Spec\cO_{L_{l}}\to\cD
\]
for a tame finite extension $L_{l}/L$. Consider the associated groupoid
scheme 
\[
\Spec\cO_{L_{l}}\times_{\cD}\Spec\cO_{L_{l}}\rightrightarrows\Spec\cO_{L_{l}}.
\]
Since $L_{l}$ has an algebraically closed residue field, the scheme
$\Spec\cO_{L_{l}}\times_{\cD}\Spec\cO_{L_{l}}$ is isomorphic to disjoint
union of copies of $\Spec\cO_{L_{l}}$. We conclude that the above
groupoid scheme is isomorphic to the groupoid scheme 
\[
\Spec\cO_{L_{l}}\times G\rightrightarrows\Spec\cO_{L_{l}}
\]
associated to an action of a constant group $G$ on $\Spec\cO_{L_{l}}$.
Thus, $\cD\cong[\Spec\cO_{L_{l}}/G]$. Since $\cD$ contains $\Spec L$
as an open dense substack and has $\Spec\cO_{L}$ as the coarse moduli
space, the $G$-action on $\Spec L_{l}$ is free, and $G$ is the
Galois group of $L_{l}/L$ and isomorphic to $\mu_{l,\cO_L}$ for some
$l$. We have proved the claim.

The induced morphism $\cD \to \widehat{\cX}_{\cO_S}$ is representable, since it becomes a relative normalization of schemes after the base change to an étale atlas of the target.  It follows that $\cD \to {\cX}_{\cO_S}$ is also representable. In summary, we have proved the existence of a positive integer and a representable morphism as claimed in the lemma. To show the uniqueness, consider an arbitrary morphism $\cD'=[\Spec\cO_{L_{l'}}/\mu_{l',\cO_L}]\to \cX_{\cO_S}$  as in the lemma. This induces a representable morphism $\cD' \to \widehat{\cX}_{\cO_S}$, which is again a relative normalization. The uniqueness of relative normalization shows that there exists an isomorphism $\cD\cong \cD'$ compatible with the morphisms to $\widehat{\cX}_{\cO_S}$ as well as the ones to ${\cX}_{\cO_S}$, which, in turn, shows $l=l'$. This completes the proof of the lemma.
\end{proof}
We keep the notation of Lemma \ref{lem:ext_tw_arc} and let $k$ be
the residue field of $L$, which is an algebraic closure of $\kappa_{v}$.
Suppose that $\cJ_{0}\cX_{\cO_{S}}$ is flat at the point of $\Spec\cO_{S}$
corresponding to $v$. The lemma shows that each $L$-point $x$ of
$\cX$ induces the representable morphism 
\[
\B\mu_{l,k}\hookrightarrow[\Spec\cO_{L_l}/\mu_{l,\cO_L}]\xrightarrow{\widetilde{x}}\cX_{\cO_{S}},
\]
which is a $k$-point of $\cJ_{0}\cX_{\cO_{S}}$. Thus, for each $v\in M_{F}\setminus S$,
we get the composite functor
\[
\cX(F_{v})\to\cX(L)\to(\cJ_{0}\cX_{\cO_{S}})(k).
\]
In particular, each $F_{v}$-point of $\cX$ determines a connected
component of $\cJ_{0}\cX_{\cO_{S}}$. From Corollary \ref{cor:bij-Pi-0},
we have the corresponding sector of $\cX$; namely, for each $v\in M_{F}\setminus S$,
we get a map
\[
\psi_{v}\colon\cX\langle F_{v}\rangle\to\pi_{0}(\cJ_{0}\cX),
\]
where $\cX\langle F_{v}\rangle$ denotes the set of isomorphism classes
in the groupoid $\cX(F_{v})$.
\begin{defn}
\label{def:residue-map}We call the map $\psi_{v}$ the \emph{residue
map }of $\cX$ at $v$\emph{ }and $\psi_{v}(x)$ the \emph{residue
}of $x$. 
\end{defn}

From \cite[Section 2.1]{moret-bailly2001problemes} (see also \cite[Section 2.4]{vcesnavivcius2015topology}),
$\cX\langle F_{v}\rangle$ has a natural topology. 
\begin{prop}
The residue map $\psi_{v}$ is locally constant. Equivalently, it
is continuous for the discrete topology on $\pi_{0}(\cJ_{0}\cX)$. 
\end{prop}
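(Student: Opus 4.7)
The plan is to reduce local constancy of $\psi_v$ to a statement about étale atlases, and then use the smoothness of $\cJ_0\cX_{\cO_S}$ combined with Hensel's lemma. By \cite{moret-bailly2001problemes}, the topology on $\cX\langle F_v\rangle$ is characterized by the property that for every étale morphism $P\colon U\to\cX$ from a scheme $U$, the induced map $U(F_v)\to\cX\langle F_v\rangle$ is continuous and open, and moreover every isomorphism class in $\cX\langle F_v\rangle$ is in the image of such a map after possibly passing to a finite unramified extension of $F_v$ (which is harmless, as $\psi_v$ is compatible with unramified base change). It therefore suffices to prove that, for any such étale $P$ and any $y_0\in U(F_v)$, the composition $U(F_v)\to\cX\langle F_v\rangle\xrightarrow{\psi_v}\pi_0(\cJ_0\cX)$ is constant on a $v$-adic neighborhood of $y_0$.

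After shrinking $U$ and enlarging $S$, we may assume that $P$ has an étale model $P_{\cO_S}\colon U_{\cO_S}\to\cX_{\cO_S}$. Since $\cO_v$ is Henselian, $y_0$ extends uniquely to $\widetilde{y}_0\in U_{\cO_S}(\cO_v)$, and any $y$ in a sufficiently small $v$-adic neighborhood of $y_0$ extends to $\widetilde{y}\in U_{\cO_S}(\cO_v)$ whose reduction modulo the maximal ideal agrees with that of $\widetilde{y}_0$. Pushing forward by $P_{\cO_S}$ and base-changing to the maximal unramified extension $L$, Lemma \ref{lem:ext_tw_arc} yields representable morphisms $\widetilde{x}_y\colon[\Spec\cO_{L_l}/\mu_{l,L}]\to\cX_{\cO_S}$; by the uniqueness in that lemma and its compatibility with étale base change, the integer $l$, the cover $L_l/L$, and the induced $k$-point of $\cJ_0\cX_{\cO_S}$ all vary as part of a single family parametrized by the deformation of $\widetilde{y}$ inside $U_{\cO_S}$.

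The main obstacle is to organize this family into an honest morphism $W\to\cJ_0\cX_{\cO_S}$ from a connected neighborhood $W$ of $\widetilde{y}_0$ in $U_{\cO_S,\cO_v}$. Once this is done, the image of $W$ is contained in a single connected component of $\cJ_0\cX_{\cO_S,\cO_v}$, because the connected components of $\cJ_0\cX_{\cO_S}$ are clopen (by smoothness and properness over $\cO_S$), and by Corollary \ref{cor:bij-Pi-0} this component corresponds to a unique sector $\cY\in\pi_0(\cJ_0\cX)$. Consequently, $\psi_v$ takes the value $\cY$ on the image in $\cX\langle F_v\rangle$ of $W(\cO_v)$, which by Moret-Bailly's openness contains an open neighborhood of $[x]$. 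This proves local constancy, and hence continuity, of $\psi_v$ with respect to the discrete topology on $\pi_0(\cJ_0\cX)$.
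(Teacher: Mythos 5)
Your outline has the right high-level shape—use the openness/continuity properties from Moret-Bailly and \v{C}esnavi\v{c}ius to reduce local constancy of $\psi_v$ to something geometric—but you explicitly name ``the main obstacle'' (organizing the twisted-arc extensions into a morphism $W\to\cJ_0\cX_{\cO_S}$) and then do not resolve it, and it is precisely there that the real work lies. Worse, the specific strategy proposed has a structural problem: if $y\in U(F_v)$ lifts through an $\cO_S$-\'etale atlas $P_{\cO_S}\colon U_{\cO_S}\to\cX_{\cO_S}$, then the unique extension of $y$ to $\widetilde{y}\in U_{\cO_S}(\cO_v)$ pushes forward to an $\cO_L$-point of $\cX_{\cO_S}$, and by the uniqueness in Lemma~\ref{lem:ext_tw_arc} that means $l=1$, so $\psi_v(P(y))$ is the non-twisted sector. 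The points whose residue you actually need to control—those with $l>1$—simply do not lift to such atlases over $\cO_v$, so the device of ``extending $y$ to $\widetilde{y}\in U_{\cO_S}(\cO_v)$ and tracking its reduction'' cannot reach them. The hedge about passing to finite unramified extensions doesn't repair this, since the same phenomenon persists at every finite level.

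The paper avoids this by changing what stack it applies the \v{C}esnavi\v{c}ius openness theorem to. It introduces $\cY_l:=\ulHom^{\rep}_{\cO_L}(\cD^l,\cX_{\cO_L})$ with $\cD^l=[\Spec\cO_{L_l}/\mu_{l,L}]$: its generic fiber is $\cX_L$ (since $\cD^l\otimes L=\Spec L$), but its $\cO_L$-points are exactly twisted arcs, and the map $\cY_l\langle\cO_L\rangle\to\pi_0(\cJ_0\cX)$ is manifestly constant on each connected component $\cY_{l,i}$. Applying $\cY_l\langle\cO_L\rangle\to\cY_l\langle L\rangle=\cX\langle L\rangle$ open (by \cite[Prop.\ 2.9(e)]{vcesnavivcius2015topology}) and letting $l$ vary over all admissible values gives an open cover of $\cX\langle L\rangle$ on each piece of which the residue is constant; composing with the continuous $\cX\langle F_v\rangle\to\cX\langle L\rangle$ finishes. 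This Hom-stack is the missing idea in your write-up: it is exactly what turns ``a family of twisted arcs'' into ``$\cO_L$-points of a single stack of finite type'' to which one can apply the openness result.
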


\begin{proof}
We keep the notation of the proof of Lemma \ref{lem:ext_tw_arc}.
Let $\cD^{l}:=[\Spec\cO_{L_{l}}/\mu_{l,\cO_L}]$. We consider the stack
\[
\cY_{l}:=\ulHom_{\cO_{L}}^{\rep}(\cD^{l},\cX_{\cO_{L}}).
\]
Let $\cY_{l}=\coprod_{i=1}^{n}\cY_{l,i}$ be the decomposition into
connected components. Then, the natural map
\[
\left(\Hom(\cD^{l},\cX_{\cO_{L}})/\cong\right)=\cY_{l}\langle\cO_{L}\rangle\to\pi_{0}(\cJ_{0}\cX)
\]
is constant on each  open and closed subset $\cY_{l,i}\langle\cO_{L}\rangle\subset\cY\langle\cO_{L}\rangle$.
Since 
\begin{align*}
\cY_{l}\otimes_{\cO_{L}}L & =\ulHom_{L}^{\rep}(\cD^{l}\otimes_{\cO_{L}}L,\cX_{\cO_{L}}\otimes_{\cO_{L}}L)\\
 & =\ulHom_{L}^{\rep}(\Spec L,\cX)\\
 & =\cX_{L},
\end{align*}
we have $\cY_{l}\langle L\rangle=\cX\langle L\rangle$. From \cite[Prop.\ 2.9 (e)]{vcesnavivcius2015topology},
the maps
\[
\cY_{l}\langle\cO_{L}\rangle\to\cY_{l}\langle L\rangle=\cX\langle L\rangle
\]
are open, and hence the images of $\cY_{l,i}\langle\cO_{L}\rangle$
in $\cX\langle L\rangle$ are open. Since $\cX\langle L\rangle$ is
covered by the images of these maps, we see that $\cX\langle L\rangle\to\pi_{0}\langle\cJ_{0}\cX\rangle$
is locally constant and continuous. From \cite[Cor.\ 2.7]{vcesnavivcius2015topology},
$\cX\langle F_{v}\rangle\to\cX\langle L\rangle$ is continuous. We
conclude that the composition $\psi_{v}\colon\cX\langle F_{v}\rangle\to\cX\langle L\rangle\to\pi_{0}(\cJ_{0}\cX)$
is also continuous.
\end{proof}
\begin{rem}
Representable morphisms $[\Spec\cO_{L,l}/\mu_{l,\cO_L}]\to\cX_{\cO_{S}}$
as in Lemma \ref{lem:ext_tw_arc} are a version of twisted arcs \cite{yasuda2004twisted,yasuda2006motivic,yasuda2017towardmotivic,yasuda2020motivic}.
Associating twisted 0-jets $\B\mu_{l,k}\to\cX_{\cO_{S}}$ to them
is a special case of the truncation map. Morphisms $[\Spec\cO_{L,l}/\mu_{l,\cO_L}]\to\cX_{\cO_{S}}$
are also an analogue of twisted stable map \cite{abramovich2002compactifying}
and one of tuning stack \cite{ellenberg2021heights}.
\end{rem}

\begin{prop}
\label{prop:residue-functorial}Let $\cY$ and $\cX$ be nice stacks
having models $\cY_{\cO_{S}}$ and $\cX_{\cO_{S}}$over $\cO_{S}$,
respectively. Let $\cY_{\cO_{S}}\to\cX_{\cO_{S}}$ be a morphism over
$\cO_{S}$. Suppose that $\cJ_{0}\cY_{\cO_{S}}$ and $\cJ_{0}\cX_{\cO_{S}}$
are flat over $\cO_{S}$. Then, for every finite place $v\notin S$,
the following diagram is commutative:
\[
\xymatrix{\cY\langle F_{v}\rangle\ar[r]\ar[d]_{\psi_{v}} & \cX\langle F_{v}\rangle\ar[d]^{\psi_{v}}\\
\pi_{0}(\cJ_{0}\cY)\ar[r] & \pi_{0}(\cJ_{0}\cX)
}
\]
Here the horizontal morphisms are the ones induced by the morphism
$\cY_{\cO_{S}}\to\cX_{\cO_{S}}$.
\end{prop}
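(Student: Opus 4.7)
The plan is to unwind the construction of $\psi_v$, as given in the paragraph just before Definition \ref{def:residue-map}, simultaneously for $\cY$ and $\cX$, and verify the commutativity by comparing the resulting twisted 0-jets.

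Fix $y \in \cY(F_v)$ with image $x = f(y) \in \cX(F_v)$; base change to the maximal unramified extension $L$ of $F_v$ to obtain $y_L \in \cY(L)$ and $x_L = f_L(y_L) \in \cX(L)$. By Lemma \ref{lem:ext_tw_arc} applied to $\cY$ and $\cX$ separately, there exist unique representable extensions
\[
\widetilde{y}\colon [\Spec\cO_{L_{l_y}}/\mu_{l_y,L}] \to \cY_{\cO_S}, \qquad \widetilde{x}\colon [\Spec\cO_{L_{l_x}}/\mu_{l_x,L}] \to \cX_{\cO_S},
\]
for positive integers $l_y, l_x$ coprime to the residue characteristic. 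The residues $\psi_v(y)$ and $\psi_v(x)$ are then read off from the $k$-points $j_y\colon\B\mu_{l_y,k}\hookrightarrow[\Spec\cO_{L_{l_y}}/\mu_{l_y,L}]\xrightarrow{\widetilde{y}}\cY_{\cO_S}$ and $j_x\colon\B\mu_{l_x,k}\to\cX_{\cO_S}$, together with Corollary \ref{cor:bij-Pi-0}.

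The crux is to identify $f_{\cO_S}\circ\widetilde{y}$ with $\widetilde{x}$ composed with a coarsification. The composition $f_{\cO_S}\circ\widetilde{y}$ extends $x_L$ but may fail to be representable because $f$ need not be representable. Using Lemma \ref{lem:KXG}, $f_{\cO_S}\circ\widetilde{y}$ is encoded by the pair $(x_{\cO_L},\phi)$ with $\phi\colon \mu_{l_y,\cO_L}^{\op}\to\ulAut(x_{\cO_L})$ the composition of the homomorphism attached to $\widetilde{y}$ with the induced map $\ulAut(y_{\cO_L})\to\ulAut(x_{\cO_L})$. Letting $H\subset\mu_{l_y,\cO_L}$ be the (flat) kernel of $\phi$, the morphism $f_{\cO_S}\circ\widetilde{y}$ factors canonically through the fppf quotient $[\Spec((\cO_{L_{l_y}})^H)/(\mu_{l_y,L}/H)]$, which is of the form $[\Spec\cO_{L_{l'}}/\mu_{l',L}]$ for a divisor $l'\mid l_y$, and the resulting map to $\cX_{\cO_S}$ is representable and extends $x_L$. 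By the uniqueness part of Lemma \ref{lem:ext_tw_arc}, $l' = l_x$ and this factorization coincides with $\widetilde{x}$.

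Restricting to the residue substacks $\B\mu_{l_y,k}\subset[\Spec\cO_{L_{l_y}}/\mu_{l_y,L}]$ and $\B\mu_{l_x,k}\subset[\Spec\cO_{L_{l_x}}/\mu_{l_x,L}]$ yields a 2-commutative diagram
\[
\xymatrix{\B\mu_{l_y,k} \ar[r]^{j_y}\ar[d] & \cY_{\cO_S}\ar[d]^{f_{\cO_S}} \\ \B\mu_{l_x,k} \ar[r]^{j_x} & \cX_{\cO_S}}
\]
in which the left vertical is induced by the surjection $\mu_{l_y,k}\to\mu_{l_x,k}$. Passing to the $\widehat\mu$-description of Proposition \ref{prop:J0-mu-hat}, both $j_y$ and $j_x$ correspond to morphisms from $\B\widehat\mu_k$, and $j_x$ is literally $f_{\cO_S}$ composed with the morphism associated to $j_y$; this is precisely how the map $\cJ_0\cY_{\cO_S}\to\cJ_0\cX_{\cO_S}$ of Corollary \ref{cor:map-J0} acts. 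Taking connected components and applying Corollary \ref{cor:bij-Pi-0} to both stacks yields the commutativity of the diagram in the statement.

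The main obstacle is the non-representability of $f_{\cO_S}\circ\widetilde{y}$: the whole argument rests on the clean ``coarsification'' step in which one passes from the morphism given by Lemma \ref{lem:KXG} to its unique representable quotient, and then invokes the uniqueness of Lemma \ref{lem:ext_tw_arc} to identify this quotient with $\widetilde{x}$. Once this identification is in place, the remaining verification is formal functoriality in the $\widehat\mu$-model.
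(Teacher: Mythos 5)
Your approach is essentially the same as the paper's: reduce the commutativity to showing that $f_{\cO_S}\circ\widetilde{y}$ (a priori not representable) factors through a representable morphism from a coarser stacky disc, and then identify that factorization with $\widetilde{x}$ via the uniqueness in Lemma~\ref{lem:ext_tw_arc}; passing to the closed substacks and to the $\widehat\mu$-description of Proposition~\ref{prop:J0-mu-hat} finishes. The paper offloads the factorization step to \cite[Prop.\ 23]{yasuda2006motivic}, whereas you sketch it directly via a kernel-and-quotient argument, which is the same content.

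There is, however, an imprecision in your factorization step that should be fixed. Lemma~\ref{lem:KXG} describes morphisms out of classifying stacks $\B G$, but $[\Spec\cO_{L_{l_y}}/\mu_{l_y,L}]$ is not a gerbe over $\Spec\cO_L$: its generic fiber is the scheme $\Spec L$ and only its closed fiber is $\B\mu_{l_y,k}$. So $f_{\cO_S}\circ\widetilde{y}$ is \emph{not} encoded by a pair $(x_{\cO_L},\phi)$ consisting of an $\cO_L$-point and a homomorphism $\mu_{l_y,\cO_L}^{\op}\to\ulAut(x_{\cO_L})$; indeed $\widetilde{y}|_{\Spec\cO_L}$ is not an $\cO_L$-point of $\cY_{\cO_S}$, and the expression $\ulAut(y_{\cO_L})$ in your writeup is not meaningful as stated. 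The correct data is a $\mu_{l_y,L}$-equivariant morphism $\Spec\cO_{L_{l_y}}\to\cX_{\cO_S}$, and the subgroup $H\subset\mu_{l_y}$ you want is the kernel of the induced $\mu_{l_y}$-action on the image; equivalently you may read it off by applying Lemma~\ref{lem:KXG} only to the restriction $\B\mu_{l_y,k}\to\cX_{\cO_S}$ at the closed fiber and taking the kernel there (étaleness of $\mu_{l_y}$ guarantees this kernel extends to a flat subgroup scheme, and representability of the quotient morphism holds because the generic fiber is non-stacky and the closed-fiber automorphism map has been made injective). With that correction your argument goes through and is a direct proof of the cited factorization statement, followed by the same tracking of automorphism groups as in the paper.
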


\begin{proof}
As before, we denote by $L$ the maximal unramified extension of $F_{v}$
and by $k$ its residue field. An $F_{v}$-point $y\colon\Spec F_{v}\to\cY$
induces an $L$-point $y_{L}\colon\Spec L\to\cY$. From Lemma \ref{lem:ext_tw_arc},
this $L$-point uniquely extends to a representable morphism $\widetilde{y_{L}}\colon[\Spec\cO_{L_{l}}/\mu_{l,\cO_L}]\to\cY_{\cO_{S}}$.
There exists a unique divisor $m$ of $l$ and a unique representable
morphism $\widetilde{x_{L}}\colon[\Spec\cO_{L_{m}}/\mu_{m,\cO_L}]\to\cX_{\cO_{S}}$
making the following diagram commutative: 
\[
\xymatrix{\B\mu_{l,k}\ar[r]\ar[d] & [\Spec\cO_{L_{l}}/\mu_{l,\cO_L}]\ar[r]\sp(0.7){\widetilde{y_{L}}}\ar[d] & \cY_{\cO_{S}}\ar[d]\\
\B\mu_{m,k}\ar[r] & [\Spec\cO_{L_{m}}/\mu_{m,\cO_L}]\ar[r]\sb(0.7){\widetilde{x_{L}}} & \cX_{\cO_{S}}
}
\]
Here arrows without a label are natural morphisms and all horizontal
arrows are representable morphisms. This is basically \cite[Prop.\ 23]{yasuda2006motivic}
with $n=0,\infty$, except that we work over a different base ring,
and the proof there works also in our situation. We also see that
$\widetilde{x_{L}}$ is the same as the morphism induced from the
point $x:=f(y)\in\cX(F_{v})$. Let 
\begin{gather*}
y_{k}\colon\Spec k\to\B\mu_{l,k}\to\cY_{\cO_{S}}\text{ and}\\
x_{k}=f(y_{k})\colon\Spec k\to\B\mu_{m,k}\to\cX_{\cO_{S}}
\end{gather*}
be the induced $k$-points. Injections $\mu_{l,k}\hookrightarrow\ulAut_{k}(y_{k})$
and $\mu_{m,k}\hookrightarrow\ulAut_{k}(x_{k})$ that correspond to
$\B\mu_{l,k}\to\cY_{\cO_{S}}$and $\B\mu_{m,k}\to\cX_{\cO_{S}}$ respectively
fit into the following commutative diagram:
\[
\xymatrix{\widehat{\mu}_{k}\ar@{->>}[r]\ar@{->>}[dr] & \mu_{l,k}\ar@{^{(}->}[r]\ar@{->>}[d] & \ulAut_{k}(y_{k})\ar[d]\\
 & \mu_{m,k}\ar@{^{(}->}[r] & \ulAut_{k}(x_{k})
}
\]
This shows that if $\beta\in(\cJ_{0}\cY_{\cO_{S}})(k)$ and $\alpha\in(\cJ_{0}\cX_{\cO_{S}})(k)$
denote $k$-points corresponding to $\B\mu_{l,k}\to\cY_{\cO_{S}}$
and $\B\mu_{m,k}\to\cX_{\cO_{S}}$ respectively, then $\beta$ maps
to $\alpha$ by the morphism $\cJ_{0}\cY_{\cO_{S}}\to\cJ_{0}\cX_{\cO_{S}}$.

The residue $\psi_{v}(y)$ of the $F_{v}$-point $y$ is the sector
containing $\beta$ and the residue $\psi_{v}(x)$ of the $F_{v}$-point
$x=f(y)$ is the sector containing $\alpha$. Thus, the above fact
that $\beta$ maps to $\alpha$ proves the proposition.
\end{proof}

\subsection{Ages}

A point $\widetilde{x}$ of $(\cJ_{0}\cX)(\overline{F})$ is represented
by the pair $(x,\iota)$ of a point $x\in\cX(\overline{F})$ and a
group monomorphism $\iota\colon\mu_{l}\hookrightarrow\Aut(x)$. Let
$\cV$ be a vector bundle of rank $r$ on $\cX$, we get a representation
of the group $\mu_{l}\subset\overline{F}$:
\[
\rho_{\widetilde{x}}\colon\mu_{l}\hookrightarrow\Aut(x)\to\GL(\cV_{x})\cong\GL_{r}(\overline{F}).
\]
Here $\cV_{x}$ is the fiber of $\cV$ over $x$, in particular, an
$\overline{F}$-vector space of dimension $r$. If $\tau$ denotes
the standard one-dimensional representation 
\[
\tau\colon\mu_{l}\hookrightarrow\overline{F}^{*}=\GL_{1}(\overline{F}),
\]
then we can write 
\[
\rho_{\widetilde{x}}\cong\bigoplus_{i=1}^{r}\tau^{a_{i}}\quad(a_{i}\in\{0,1,\dots,l-1\}).
\]

\begin{defn}
We define the \emph{age }of $\widetilde{x}$ with respect to $\cV$
to be
\[
\age(\widetilde{x};\cV):=\frac{1}{l}\sum_{i=1}^{d}a_{i}\in\QQ_{\ge0}.
\]
When $\cV$ is the tangent bundle $\T\cX$, we simply call it the
\emph{age }of $\widetilde{x}$ and denote it by $\age(\widetilde{x})$.
When we would like to specify the stack $\cX$ in question, we write
$\age_{\cX}(\widetilde{x})$. 
\end{defn}

\begin{lem}
\label{lem:age-sector}If we fix a vector bundle $\cV$, then the
age \emph{$\age(\widetilde{x};\cV)$} depends only on the sector to
which $\widetilde{x}$ belongs.
\end{lem}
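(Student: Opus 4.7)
The plan is to realize $\age(\,\cdot\,;\cV)$ as the value of a locally constant function on $\cJ_{0}\cX$, whence it is automatically constant on each connected component.

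First I would fix a positive integer $l$ and restrict attention to the open and closed substack $\cY_{l}:=\ulHom_{F}^{\rep}(\B\mu_{l,F},\cX)$ of $\cJ_{0}\cX$. Since $\cJ_{0}\cX=\coprod_{l>0}\cY_{l}$, every sector lies in a unique $\cY_{l}$, so it suffices to prove the claim on each $\cY_{l}$. By the defining universal property of the Hom stack, the identity morphism of $\cY_{l}$ corresponds to a (representable) universal morphism
\[
u\colon\B\mu_{l,F}\times_{F}\cY_{l}\longrightarrow\cX,
\]
through which every representable $T$-morphism $\B\mu_{l,T}\to\cX$ factors. Let $\cW:=u^{*}\cV$; this is a vector bundle on $\B\mu_{l,F}\times_{F}\cY_{l}$, which via descent along the smooth atlas $\cY_{l}\to\B\mu_{l,F}\times_{F}\cY_{l}$ is the same as a $\mu_{l}$-equivariant vector bundle on $\cY_{l}$.

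Next I would invoke the fact that $\mu_{l,F}$ is a diagonalizable $F$-group scheme, whose character group is the constant group scheme $\ZZ/l\ZZ$. Consequently every $\mu_{l}$-equivariant quasi-coherent sheaf on an $F$-scheme (or DM stack) decomposes canonically into its isotypic components. Applying this to $\cW$ yields a decomposition of vector bundles on $\cY_{l}$,
\[
\cW=\bigoplus_{i=0}^{l-1}\cW_{i},
\]
where $\mu_{l}$ acts on $\cW_{i}$ through the character $\tau^{i}$. Each $\cW_{i}$ is a vector bundle on $\cY_{l}$, and its rank is a locally constant integer-valued function on $|\cY_{l}|$, hence constant on each connected component.

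Finally I would unwind the definition. A geometric point $\widetilde{x}\in\cY_{l}(\overline{F})$ represented by the pair $(x,\iota\colon\mu_{l}\hookrightarrow\Aut(x))$ fits into a 2-commutative diagram with $u$, so the fiber $(\cW_{i})_{\widetilde{x}}$ is canonically identified with the $\tau^{i}$-isotypic component of $\cV_{x}$ under the representation $\rho_{\widetilde{x}}$. Therefore
\[
\age(\widetilde{x};\cV)=\frac{1}{l}\sum_{i=0}^{l-1}i\cdot\rank\bigl(\cW_{i}|_{\widetilde{x}}\bigr),
\]
which, by the constancy of the ranks just established, depends only on the connected component of $\cY_{l}$ containing $\widetilde{x}$. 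The only technical point requiring care is setting up the universal morphism $u$ and verifying that the isotypic decomposition of $\cW$ already exists over $F$ (rather than only after passing to $\overline{F}$); both facts are standard, the second because $\mu_{l}$ is diagonalizable, so this is the place where slight bookkeeping effort is needed.
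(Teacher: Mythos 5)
The paper leaves this lemma unproved, merely citing it as well known for $\cV=\T\cX$ (\cite[p.\ 743]{yasuda2006motivic}) and noting the general case is no different; your argument supplies the expected proof and is correct. The key steps are sound: restricting to a fixed $\cY_{l}=\ulHom_{F}^{\rep}(\B\mu_{l,F},\cX)$, pulling $\cV$ back along the universal evaluation map, recognizing $u^{*}\cV$ as a $\mu_{l}$-equivariant bundle on $\cY_{l}$, and using that $\mu_{l}$ is diagonalizable over $F$ so the $\chi$-isotypic pieces $\cW_{i}$ are themselves vector bundles on $\cY_{l}$, whose ranks are therefore locally constant; the age is then a fixed linear combination of these ranks. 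Two small remarks on polish rather than substance: (i) the universal morphism $u$ is not itself representable once you project to $\cX$ (only $\B\mu_{l,F}\times_{F}\cY_{l}\to\cX\times_{F}\cY_{l}$ over $\cY_{l}$ is), but this is irrelevant since you only pull back a sheaf; (ii) you call $\cY_{l}\to\B\mu_{l,F}\times_{F}\cY_{l}$ a ``smooth atlas,'' which is a slight abuse since $\cY_{l}$ is a DM stack rather than a scheme, but the descent/equivariant-sheaf equivalence for $[\cY_{l}/\mu_{l,F}]$ with trivial action is of course still valid. An essentially equivalent, slightly more elementary way to phrase the finish is that over a connected base a family of $\mu_{l}$-representations has constant isomorphism type because $\Rep(\mu_{l})$ over a field is semisimple with simple objects the characters $\tau^{i}$, and the multiplicities are ranks of the direct summands $\cW_{i}$; your write-up already captures this.
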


\begin{proof}
We omit the proof, as this is well-known for the case $V=\T\cX$ (for
example, see \cite[p.\ 743]{yasuda2006motivic}) and there is no essential
difference in the general case.
\end{proof}
\begin{defn}
For a sector $\cY$, we define $\age(\cY;\cV)$ to be $\age(\widetilde{x};\cV)$
for any point $\widetilde{x}\in\cY(\overline{F})$. When $\cV=\T\cX$,
we denote it simply by $\age(\cY)$. 
\end{defn}

\section{Line bundles and stable heights\label{sec:Line-bundles}}

Let $\cX$ be a nice stack over $F$.
\begin{defn}
The \emph{Picard group} of $\cX$, denoted by $\Pic(\cX)$, is the
group of isomorphism classes of line bundles on $\cX$; the group
structure is given by the tensor product of line bundles. We define
$\Pic(\cX)_{\QQ}:=\Pic(\cX)\otimes_{\ZZ}\QQ$ and call its elements
\emph{$\QQ$-line bundles}. 
\end{defn}

A $\QQ$-line bundle is thus represented by a formal product $\bigotimes_{i=1}^{n}\cL_{i}^{s_{i}}$
of line bundles with $s_{i}\in\QQ$ or even by a formal power $\cL^{s}$,
$s\in\QQ$ of a single line bundle $\cL$. Let $\pi\colon\cX\to\overline{\cX}=X$
be the coarse moduli space morphism. We have the pullback map $\pi^{*}\colon\Pic(X)_{\QQ}\to\Pic(\cX)_{\QQ}.$
\begin{prop}
\label{prop:iso-Pic}The map $\pi^{*}\colon\Pic(X)_{\QQ}\to\Pic(\cX)_{\QQ}$
is an isomorphism.
\end{prop}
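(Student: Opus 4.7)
The plan is to establish injectivity and surjectivity of $\pi^{*}$ separately. Injectivity is an immediate consequence of the projection formula together with the defining property of the coarse moduli for tame stacks; surjectivity, which only holds after rationalization, uses that a uniform power of any line bundle on $\cX$ has trivial stabilizer action and therefore descends to $X$.

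For injectivity, I would argue as follows. Since $\cX$ is a smooth separated DM stack over a field of characteristic zero, it is automatically tame, and so the coarse moduli morphism $\pi$ satisfies $\pi_{*}\cO_{\cX}=\cO_{X}$. Applying the projection formula to the proper morphism $\pi$ and a line bundle $\cM\in\Pic(X)$, we obtain $\pi_{*}\pi^{*}\cM\cong\cM\otimes\pi_{*}\cO_{\cX}\cong\cM$. Hence if $\pi^{*}\cM\cong\cO_{\cX}$, then $\cM\cong\pi_{*}\cO_{\cX}\cong\cO_{X}$. This shows that $\pi^{*}\colon\Pic(X)\to\Pic(\cX)$ is injective, and this remains so after tensoring with $\QQ$.

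For surjectivity after rationalization, let $\cL\in\Pic(\cX)$. Because $\cX$ is of finite type and has finite inertia, the orders of the stabilizer group schemes at geometric points are bounded; choose a positive integer $N$ divisible by all of them. For each geometric point $x$ of $\cX$ with stabilizer $G_{x}$, the action of $G_{x}$ on the fiber $\cL_{x}$ factors through a character $G_{x}\to\GG_{m}$, and so the induced action on $\cL^{\otimes N}_{x}$ is trivial. By the local structure theorem for separated DM stacks, étale-locally on $X$ the stack $\cX$ has the form $[U/G]$ for a scheme $U$ and a finite group $G$ acting tamely, and a $G$-equivariant line bundle whose stabilizers act trivially on fibers descends to a line bundle on the scheme quotient $U/G$. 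Gluing the local descents yields $\cM\in\Pic(X)$ with $\pi^{*}\cM\cong\cL^{\otimes N}$, whence $\cL=\cM^{1/N}$ in $\Pic(\cX)_{\QQ}$.

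The main technical point is the descent step in the surjectivity argument, namely the identification of the obstruction to descending a line bundle from $\cX$ to $X$ with the characters by which the stabilizers act on the fibers, together with the assertion that the vanishing of these characters suffices for descent in the tame DM setting. I would invoke this as a standard fact about tame quotient stacks rather than reproducing it; the smoothness and separatedness of $\cX$, and the characteristic-zero hypothesis built into working over a number field, make the hypotheses of the classical descent statements automatic here.
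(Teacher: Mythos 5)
Your proof is correct and follows essentially the same route as the paper: both rest on the observation that a sufficiently divisible power $\cL^{r}$ of any line bundle on $\cX$ has trivial stabilizer action on fibers and therefore descends to a genuine line bundle $\pi_{*}(\cL^{r})$ on $X$. The only packaging difference is that the paper constructs the inverse map $\cL\mapsto(\pi_{*}(\cL^{r}))^{1/r}$ directly, while you split the verification into injectivity (via $\pi_{*}\cO_{\cX}=\cO_{X}$ and the projection formula) and surjectivity (via the descent of $\cL^{\otimes N}$); these are two ways of organizing the same content.
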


\begin{proof}
Let $r$ be a positive integer factorial enough so that the automorphism
group of every point of $\cX$ has order dividing $r$. For any line
bundle $\cL$ on $\cX$, $\pi_{*}(\cL^{r})$ is a line bundle on $X$.
The $\QQ$-linear map $\Pic(\cX)_{\QQ}\to\Pic(X)_{\QQ}$ sending $\cL$
to the $\QQ$-line bundle $(\pi_{*}(\cL^{r}))^{1/r}$ is the inverse
of $\pi^{*}$. 
\end{proof}
Since $\cX$ is smooth, it has the \emph{canonical line bundle} $\omega_{\cX}:=\det(\Omega_{\cX/F})$.
On the other hand, the coarse moduli space $X$ is not generally smooth,
but has quotient singularities. In particular, $X$ is $\QQ$-factorial.
The canonical sheaf $\omega_{X}$ is defined to be the unique reflexive
sheaf such that $\omega_{X}|_{X_{\sm}}=\omega_{X_{\sm}}$ with $X_{\sm}$
denoting the smooth locus. This is not a line bundle in general. However,
for some positive integer $r$, the $r$-th reflexive power $\omega_{X}^{[r]}$,
which can be constructed as the double dual $(\omega_{X}^{r})^{\vee\vee}$
of $\omega_{X}^{r}$, is an invertible sheaf. See \cite{kollar2013singularities,ishii2018introduction}
for more details on these contents. Thus, we get the \emph{canonical
$\QQ$-line bundle }of $X$ as $(\omega_{X}^{[r]})^{1/r}$. The corresponding
element of $\Pic(X)_{\QQ}$ is independent of the choice of $r$.
The canonical line bundle $\omega_{\cX}$ of $\cX$ and the canonical
$\QQ$-line bundle $(\omega_{X}^{[r]})^{1/r}$ of $X$ do not generally
correspond to each other by the isomorphism $\Pic(\cX)_{\QQ}\cong\Pic(X)_{\QQ}$.
However, if $\cX\to X$ is étale in codimension one, then they correspond
to each other. This is the case, for example, when $\cX\to X$ is
a gerbe or when $\cX$ has the trivial generic stabilizer and has
no reflection (Definition \ref{def:reflection}).

For a place $v\in M_{F}$, we denote by $|\cdot|_{v}$ the $v$-adic
norm on $F_{v}$ normalized as follows. For finite places, we normalize
as $|\pi_{v}|_{v}=q_{v}^{-1}$ with $\pi_{v}$ a uniformizer of $F_{v}$.
For a real place, we let $|\cdot|_{v}$ to be the standard absolute
value. For a complex place, we let $|\cdot|_{v}$ to be the square
of the standard absolute value. Let $\cL$ be a line bundle on $\cX$
and let $v\in M_{F}$. We have a continuous map
\[
\eta_{v}\colon\cL\langle F_{v}\rangle\to\cX\langle F_{v}\rangle.
\]
This is not quite a line bundle, but has a structure close to it.
For an isomorphism class $[x]\in\cX\langle F_{v}\rangle$, the fiber
$\eta_{v}^{-1}([x])$ is the quotient of the line $x^{*}\cL\cong F_{v}$
by the action of $\Aut(x)$. Since this action preserves the zero
vector, we get the zero section $\cX\langle F_{v}\rangle\to\cL\langle F_{v}\rangle$
and it makes sense to say whether an element of $\cL\langle F_{v}\rangle$
is zero or not. The following definition restates \cite[Def.\ 4.3.1.1]{darda2021rational2}
in a slightly different way:
\begin{defn}
A \emph{$v$-adic metric} on $\cL$ means a continuous map $\left\Vert \cdot\right\Vert _{v}\colon\cL\langle F_{v}\rangle\to\RR_{\ge0}$
such that 
\begin{enumerate}
\item for $a\in F_{v}$ and $[s]\in\cL\langle F_{v}\rangle$, $\left\Vert [as]\right\Vert _{v}=|a|_{v}\cdot\left\Vert [s]\right\Vert _{v}$,
\item for each $F_{v}$-morphism $f\colon U\to\cX_{F_{v}}$ from an $F_{v}$-scheme
$U$ of finite type and for every section $s\colon U(F_{v})\to\cL_{U}(F_{v})$,
the composition 
\[
U(F_{v})\xrightarrow{s}\cL_{U}(F_{v})\to\cL\langle F_{v}\rangle\xrightarrow{\left\Vert \cdot\right\Vert _{v}}\RR_{\ge0}
\]
is continuous.
\end{enumerate}
\end{defn}

As in the case of varieties, for almost every place $v$, there exists
a canonical choice of $v$-adic metric on the given $\cL$ derived
from a model. Let $\cX$ be a model of $\cX$ over $\cO_{S}$ and
let $\cL_{\cO_{S}}$ be a line bundle on $\cX_{\cO_{S}}$ which is
a model of $\cL$. For $v\in M_{F}\setminus S$ and for $x\in\cX(F_{v})$,
the $\overline{F_{v}}$-point $x_{\overline{F_{v}}}\in\cX(\overline{F_{v}})$
induced from $x$ uniquely extends to an $\cO_{\overline{F_{v}}}$-point
$x_{\cO_{\overline{F_{v}}}}\in\cX(\cO_{\overline{F_{v}}})$. Therefore,
the one-dimensional $\overline{F_{v}}$-vector space $(x_{\overline{F_{v}}})^{*}\cL$
has the canonical lattice $(x_{\cO_{\overline{F_{v}}}})^{*}\cL_{\cO_{S}}\cong\cO_{\overline{F_{v}}}$.
We denote again by $|\cdot|_{v}$ the unique extension of $|\cdot|_{v}$
to $\overline{F_{v}}$. We choose an isomorphism $(x_{\overline{F_{v}}})^{*}\cL\cong\overline{F_{v}}$
by which $(x_{\cO_{\overline{F_{v}}}})^{*}\cL_{\cO_{S}}$ and $\cO_{\overline{F_{v}}}$
correspond to each other, which induces a norm $\left\Vert \cdot\right\Vert _{v}$
on $(x_{\overline{F_{v}}})^{*}\cL$ corresponding to $|\cdot|_{v}$
on $\overline{F_{v}}$. 
\[
\xymatrix{\cO_{\overline{F_{v}}}\ar@{^{(}->}[r]\ar@{=}[d]^{\wr} & \overline{F_{v}}\ar[r]^{|\cdot|_{v}}\ar@{=}[d]^{\wr} & \RR\\
(x_{\cO_{\overline{F_{v}}}})^{*}\cL_{\cO_{S}}\ar@{^{(}->}[r] & (x_{\overline{F_{v}}})^{*}\cL\ar[ur]_{\left\Vert \cdot\right\Vert _{v}}
}
\]
The norm $\left\Vert \cdot\right\Vert _{v}$ given in this way is
independent of the choice of the isomorphism $(x_{\overline{F_{v}}})^{*}\cL\cong\overline{F_{v}}$
as above and invariant under the actions of $\Aut(x_{\overline{F_{v}}})$
and $\Aut(x)$. Thus, we get a continuous map $\left\Vert \cdot\right\Vert _{v}\colon\cL\langle\overline{F_{v}}\rangle\to\RR_{\ge0}$.
\begin{defn}[Adelic metric]
For $v\in M_{F}\setminus S$, the \emph{$v$-adic metric on $\cL$
induced by the model $\cL_{\cO_{S}}$} is defined to be the composite
map
\[
\cL\langle F_{v}\rangle\to\cL\langle\overline{F_{v}}\rangle\xrightarrow{\left\Vert \cdot\right\Vert _{v}}\RR_{\ge0};
\]
we denote it again by $\left\Vert \cdot\right\Vert _{v}$. An \emph{adelic
metric} on $\cL$ is a collection $(\left\Vert \cdot\right\Vert _{v})_{v\in M_{F}}$
of $v$-adic metrics such that for almost every $v$, $\left\Vert \cdot\right\Vert _{v}$
is induced from a model. 
\end{defn}

\begin{rem}
When $\cX$ is a variety, the $v$-adic metric $\left\Vert \cdot\right\Vert _{v}$
induced by a model takes values in $\{0\}\cup q_{v}^{\ZZ}\subset\RR_{\ge0}$.
This is no longer true for stacks. For a point $x\in\cX(F_{v})$,
the map 
\[
F_{v}\cong x^{*}\cL\to\cL\langle F_{v}\rangle\xrightarrow{\left\Vert \cdot\right\Vert _{v}}\RR_{\ge0}
\]
takes values in $\{0\}\cup q_{v}^{r+\ZZ}$ for some $r\in\QQ$. 
\end{rem}

\begin{defn}[Stable height]
\label{def:stable-height}Let $\cL$ be a line bundle on $\cX$ given
with an adelic metric $(\left\Vert \cdot\right\Vert _{v})_{v\in M_{F}}$
and let $x\in\cX(F)$. We choose a nonzero element $0\ne s\in x^{*}\cL\cong F$.
For each place $v\in M_{F}$, we get the induced $F_{v}$-point $x_{v}\in\cX(F_{v})$
and $0\ne s_{v}\in(x_{v})^{*}\cL\cong F_{v}$. We define the \emph{height
function }$H_{\cL}\colon\cX\langle F\rangle\to\RR$ by
\[
H_{\cL}(x):=\prod_{v\in M_{F}}\left\Vert s_{v}\right\Vert _{v}^{-1}.
\]
\end{defn}

This function is well-defined; it is independent of the choice of
an $F$-point $x$ from an isomorphism class as well as of the choice
of $s$. If $\cL$ and $\cL'$ are line bundles given with adelic
metrics, then the tensor product $\cL\otimes\cL'$ has the naturally
induced adelic metric. The height function has the following multiplicativity:
\[
H_{\cL\otimes\cL'}(x)=H_{\cL}(x)\cdot H_{\cL'}(x).
\]
The height function has also the following functoriality. For a morphism
$f\colon\cY\to\cX$ of nice stacks and a line bundle $\cL$ on $\cX$
given with an adelic metric, the pullback $f^{*}\cL$ has a naturally
induced adelic metric. For every $y\in\cY(F)$, 
\[
H_{f^{*}\cL}(y)=H_{\cL}(f(y)).
\]
The above constructions have natural generalizations to $\QQ$-line
bundles:
\begin{defn}
An \emph{adelic metric} on a $\QQ$-line bundle $\bigotimes_{i=1}^{n}\cL_{i}^{s_{i}}$
is a collection consisting of an adelic metric for each $\cL_{i}$.
For a $\QQ$-line bundle $\cL=\bigotimes_{i=1}^{n}\cL_{i}^{s_{i}}$,
given with an adelic metric, the \emph{height function} of $\cL$
is defined by
\[
H_{\cL}=\prod_{i}(H_{\cL_{i}})^{s_{i}}.
\]
\end{defn}

\begin{rem}[Stability properties]
\label{rem:stability}The height function $H_{\cL}$ has the following
stability properties, which were observed already in \cite{darda2021rational2,ellenberg2021heights}. 
\begin{enumerate}
\item Let $K/F$ be a finite extension. For a line bundle $\cL$ on $\cX$,
let $\cL_{K}$ be the induced line bundle on $\cX_{K}$. Suppose that
$\cL_{K}$ is given an adelic metric, which induces one on $\cL$
in the obvious way. In this situation, the associated height functions
$H_{\cL}$ and $H_{\cL_{K}}$ are related as follows; for an $F$-point
$x\in\cX\langle F\rangle$ and for the induced $K$-point $x_{K}\in\cX_{K}\langle K\rangle$,
\[
H_{\cL_{K}}(x_{K})=H_{\cL}(x)^{[K:F]}.
\]
\item Let $\cL$ and $L$ be $\QQ$-line bundles on $\cX$ and $X$ corresponding
to each other. An adelic metric on $\cL$ induces one on $L$, and
vice versa. When they are given adelic metrics corresponding to each
other in this way, the height functions $H_{\cL}$ and $H_{L}$ are
related by
\[
H_{\cL}=H_{L}\circ\pi,
\]
where $\pi$ is the map $\cX\langle F\rangle\to X(F)$.
\end{enumerate}
\end{rem}

\begin{rem}
\label{rem:Northcott-fails}Even if a line bundle $\cL$ on $\cX$
corresponds to an \emph{ample }$\QQ$-line bundle on $X$, the height
function $H_{\cL}\colon\cX\langle F\rangle\to\RR$ does not generally
satisfy the Northcott property.\footnote{We say that a function $H\colon U\to\RR$ has the Northcott property
if for every $B\in\RR$, the set $\{x\in U\mid H(x)\le B\}$ is finite.} This follows from either of the two stability properties in Remark
\ref{rem:stability}. For, by a map $\cX\langle K\rangle\to\cX\langle F\rangle$
or $\cX\langle F\rangle\to X(F)$, it may happen that infinitely many
distinct points map to a single point. This is the main reason why
we need to introduce ``unstable'' heights in the next section.
\end{rem}

\section{Raising data and unstable heights\label{sec:Raising-data}}
\begin{defn}
A \emph{raising function }of $\cX$ is a function
\[
c\colon\pi_{0}(\cJ_{0}\cX)\to\RR
\]
satisfying $c(\cX)=0$. A \emph{raising datum} of $\cX$ is a collection
$c_{*}=(c,(c_{v})_{v\in M_{F}})$ of a raising function $c$ and continuous
functions,
\[
c_{v}\colon\cX\langle F_{v}\rangle\to\RR,
\]
such that for almost every $v$, $c_{v}$ factors as $c_{v}=c\circ\psi_{v}$
with $\psi_{v}$ the residue map (see Definition \ref{def:residue-map}):
\[
c_{v}:\cX\langle F_{v}\rangle\xrightarrow{\psi_{v}}\pi_{0}(\cJ_{0}\cX)\xrightarrow{c}\RR.
\]
We call $c$ the \emph{generic raising function of} $c_{*}$. We say
that a raising function $c$ is \emph{positive} if $c(\cY)>0$ for
every twisted sector $\cY$. We say that a raising datum is \emph{positive
}if its generic raising function is positive.
\end{defn}

Note that from \cite[Prop.\ 2.9]{vcesnavivcius2015topology}, $\cX\langle F_{v}\rangle$
are compact and the continuous maps $c_{v}$ are bounded. Note also
that if $\psi_{v}$ are surjective for infinitely many $v$, then
the generic raising function $c$ of $c_{*}=(c,(c_{v})_{v\in M_{F}})$
is determined by the subcollection $(c_{v})_{v\in M_{F}}$. If $\cX=\B G$
for an étale group scheme $G$ over $F$, then $\psi_{v}$ are surjective
for infinitely many $v$. In this case, the raising function is essentially
the same as what is called the rational class function in \cite{ellenberg2005counting}
and the counting function in \cite{wood2010onthe,darda2022torsors}.

\begin{defn}
A \emph{raised line bundle }(resp.~\emph{strictly raised line bundle})
on $\cX$ means the pair $(\cL,c)$ (resp.~$(\cL,c_{*})$) of a line
bundle $\cL$ on $\cX$ and a raising function $c$ (resp.~a raising
datum $c_{*}$). 
\end{defn}

\begin{defn}[Unstable height]
\label{def:height}For $x\in\cX(F)$ and $v\in F_{v}$, let $x_{v}$
denote the induced $F_{v}$-point $x_{v}\in\cX(F_{v})$. Let $(\cL,c_{*})$
be a strictly raised line bundle. We suppose that $\cL$ is given
an adelic metric so that the height function $H_{\cL}$ is defined
as in Definition \ref{def:stable-height}. Then, we define the \emph{associated
height function} of $(\cL,c_{*})$ to be 
\[
H_{\cL,c_{*}}(x):=H_{\cL}(x)\times\prod_{v\in M_{F}}q_{v}^{c_{v}(x_{v})}.
\]
Here $q_{v}$ is the cardinality of the residue field $\kappa_{v}$
for a finite place $v$ and we put $q_{v}:=e$ for an infinite place
$v$.
\end{defn}

Note that if we fix $x\in\cX(F)$, then for almost every $v$, the
residue $\psi_{v}(x_{v})$ is the non-twisted sector, and hence $c_{v}(x_{v})=0$.
Thus, the above product $\prod_{v\in M_{F}}q_{v}^{c_{v}(x_{v})}$
is a finite product. For strictly raised line bundles $(\cL,c_{*})$
and $(\cL',c_{*}')$, the pair $(\cL\otimes\cL',c_{*}+c_{*}')$ is
also a strictly raised line bundle. The height function satisfies
the multiplicativity:
\[
H_{\cL\otimes\cL',c_{*}+c_{*}'}(x)=H_{\cL,c_{*}}(x)\cdot H_{\cL',c_{*}'}(x).
\]
The height function has the functoriality:
\begin{prop}
Let $f\colon\cY\to\cX$ be a morphism of nice stacks, let $(\cL,c_{*})$
be a strictly raised line bundle on $\cX$, which induces a strictly
raised line bundle on $\cY$. For every $F$-point $y$ of $\cY$,
we have
\[
H_{f^{*}\cL,f^{*}c_{*}}(y)=H_{\cL,c_{*}}(f(y)).
\]
\end{prop}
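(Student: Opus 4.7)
The plan is to reduce the identity to the already-established multiplicative identities, namely the functoriality of the stable height $H_{f^{*}\cL}(y)=H_{\cL}(f(y))$ and the definition of the local factors $q_{v}^{c_{v}(x_{v})}$, then verify that these match up term by term.

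First, I would unwind what the induced strictly raised line bundle on $\cY$ is. By Corollary \ref{cor:map-J0}, the morphism $f$ yields a map $\pi_{0}(\cJ_{0}\cY)\to\pi_{0}(\cJ_{0}\cX)$; the pullback generic raising function $f^{*}c$ is the composition of this map with $c$. For each place $v$, the induced map $f_{v}\colon\cY\langle F_{v}\rangle\to\cX\langle F_{v}\rangle$ is continuous, and $f^{*}c_{v}:=c_{v}\circ f_{v}$. I would then check that $(f^{*}c,(f^{*}c_{v})_{v})$ is indeed a raising datum: for almost every $v$ (namely, those outside a finite set of places where both $c_{*}$ and a common model of $f$ behave well), we have $c_{v}=c\circ\psi_{v}^{\cX}$, and by Proposition \ref{prop:residue-functorial} the square
\[
\xymatrix{\cY\langle F_{v}\rangle\ar[r]^{f_{v}}\ar[d]_{\psi_{v}^{\cY}} & \cX\langle F_{v}\rangle\ar[d]^{\psi_{v}^{\cX}}\\
\pi_{0}(\cJ_{0}\cY)\ar[r] & \pi_{0}(\cJ_{0}\cX)
}
\]
commutes, so $f^{*}c_{v}=c_{v}\circ f_{v}=c\circ\psi_{v}^{\cX}\circ f_{v}=(f^{*}c)\circ\psi_{v}^{\cY}$, as required.

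Next I would apply the definitions. For $y\in\cY(F)$ and each $v\in M_{F}$, the $F_{v}$-point $(f(y))_{v}$ agrees with $f(y_{v})=f_{v}(y_{v})$, since base change is functorial. Hence
\[
(f^{*}c_{v})(y_{v})=c_{v}(f_{v}(y_{v}))=c_{v}((f(y))_{v}).
\]
Combined with the functoriality of the stable height recalled in Section \ref{sec:Line-bundles}, namely $H_{f^{*}\cL}(y)=H_{\cL}(f(y))$, I obtain
\[
H_{f^{*}\cL,f^{*}c_{*}}(y)=H_{f^{*}\cL}(y)\prod_{v\in M_{F}}q_{v}^{(f^{*}c_{v})(y_{v})}=H_{\cL}(f(y))\prod_{v\in M_{F}}q_{v}^{c_{v}((f(y))_{v})}=H_{\cL,c_{*}}(f(y)).
\]

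There is no real obstacle here; the proof is essentially a formal verification. The only subtlety worth flagging is the well-definedness of $f^{*}c_{*}$ as a raising datum, which genuinely uses the compatibility of residue maps with morphisms of nice stacks (Proposition \ref{prop:residue-functorial}). Once that is in place, the computation is a direct consequence of the multiplicative structure of $H_{\cL,c_{*}}$ and the functoriality of the stable height $H_{\cL}$.
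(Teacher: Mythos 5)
Your proof is correct and follows essentially the same path as the paper's: define $f^{*}c_{*}$ via the natural maps $\pi_{0}(\cJ_{0}\cY)\to\pi_{0}(\cJ_{0}\cX)$ and $\cY\langle F_{v}\rangle\to\cX\langle F_{v}\rangle$, invoke Proposition \ref{prop:residue-functorial} to verify it is a raising datum, and then conclude from the functoriality of the stable height together with the definition of the unstable height. You have simply written out the place-by-place bookkeeping that the paper leaves implicit.
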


\begin{proof}
Note that the raising datum $c_{*}=(c,(c_{v})_{v\in M_{F}})$ for
$\cX$ induces the collection of maps, $f^{*}c_{*}=(f^{*}c,(f^{*}c_{v})_{v})$,
by natural maps $\pi_{0}(\cJ_{0}\cY)\to\pi_{0}(\cJ_{0}\cX)$ and $\cY\langle F_{v}\rangle\to\cX\langle F_{v}\rangle$.
Using Proposition \ref{prop:residue-functorial}, we can show that
this collection is again a raising datum. Suppose that we have a morphism
$f\colon\cY_{\cO_{S}}\to\cX_{\cO_{S}}$ of models over $\cO_{S}$
of nice stacks and that $\cJ_{0}\cY_{\cO_{S}}$ and $\cJ_{0}\cX_{\cO_{S}}$
are flat over $\cO_{S}$. The desired equality follows from the functoriality
of stable height functions mentioned in Section \ref{sec:Line-bundles}
and the definition of unstable height functions.
\end{proof}
If two raising data $c_{*}$ and $c_{*}'$ have the same generic raising
function $c$, then the function $H_{\cL,c_{*}}/H_{\cL,c_{*}'}$ is
bounded. Thus, a height function $H_{\cL,c}$ of a (non-strictly)
raised line bundle $(\cL,c)$ is determined modulo bounded functions.
In what follows, we mainly consider (non-strictly) raised line bundles
and their height functions.
\begin{rem}[Unstableness, cf.~Remark \ref{rem:stability}]
\label{rem:unstable}Let $K/F$ be a finite extension. Suppose that
a line bundle $\cL$ on $\cX$ and the induced line bundle $\cL_{K}$
on $\cX_{K}$ have raising data $c_{*}$ and $d_{*}$ which are compatible
with each other in the obvious way. Then, for an $F$-point $x\in\cX(F)$
and for the induced $K$-point $x_{K}\in\cX(K)=\cX_{K}(K)$, the formula
\[
H_{\cL_{K},d_{*}}(x_{K})=H_{\cL,c_{*}}(x)^{[K:F]}
\]
does \emph{not} hold unlike the case of stable height as in Remark
\ref{rem:stability}. For example, if $\cX=\B G$ for a finite group
$G$ and if $x\in\cX(F)$ is a $G$-torsor over $F$, then $H_{\cL,c_{*}}(x)$
is a generalized discriminant as will be explained in Example \ref{exa:discriminant}
and can become arbitrarily large as $x$ varies. On the other hand,
$x$ is trivialized by some finite extension $K/F$ and the corresponding
$K$-point $x_{K}\in\cX(K)$ has height bounded by a constant independent
of $x$. 
\end{rem}

\begin{lem}
\label{lem:Dedeking-Iso}Let $T$ be a connected Dedekind scheme and
let $\cX$ be a separated DM stack over $T$. Let $a,b,c\in\cX(T)$.
Let $\ulIso_{T}(a,b)^{\circ}$ denote the open and closed subscheme
of the Iso scheme $\ulIso_{T}(a,b)$ consisting of one-dimensional
connected components. Similarly for $\ulIso_{T}(a,c)^{\circ}$, $\ulAut_{T}(a)^{\circ}$,
etc.
\begin{enumerate}
\item The scheme $\ulIso_{T}(a,b)^{\circ}$ is finite and étale over $T$.
In particular, $\ulAut_{T}(a)^{\circ}$ is a finite étale group scheme
over $T$.
\item If the scheme $\ulIso_{T}(a,b)^{\circ}$ is not empty, then it has
a natural structure of $\ulAut_{T}(b)^{\circ}$-torsor as well as
one of $\ulAut_{T}(a)^{\circ}$-torsor. In particular, $\ulIso_{T}(a,b)^{\circ}$
is étale and finite over $T$.
\item Suppose that there exists an isomorphism $\ulIso_{T}(a,b)^{\circ}\to\ulIso_{T}(a,c)^{\circ}$
which is equivariant for $\ulAut_{T}(a)^{\circ}$-actions. Then, $b$
and $c$ are isomorphic in $\cX(T)$. 
\end{enumerate}
\end{lem}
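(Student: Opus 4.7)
My plan is to first establish (1), which carries the technical content, and then deduce (2) and (3) from it by a dimension-counting argument and a faithfully flat descent respectively. The key observation throughout is that, since $\cX$ is a separated DM stack over $T$, the diagonal $\Delta\colon\cX\to\cX\times_T\cX$ is unramified (because $\cX$ is DM) and proper (because $\cX$ is separated), hence finite; accordingly the Iso scheme $\ulIso_T(a,b)$, being the pullback of $\Delta$ along $(a,b)\colon T\to\cX\times_T\cX$, is a finite unramified $T$-scheme. To prove (1), I would invoke the étale-local structure of finite unramified morphisms: there exists an étale surjection $T'\to T$ such that $\ulIso_T(a,b)\times_T T'$ is a disjoint union of closed immersions into $T'$. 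Since $T'$ is Dedekind, each such closed immersion is either an open-and-closed subscheme of $T'$ (one-dimensional) or a subscheme supported at finitely many closed points (zero-dimensional), and the one-dimensional pieces assemble into an étale $T'$-scheme. By étale descent, $\ulIso_T(a,b)^{\circ}\to T$ is étale, and it is finite since it is a closed subscheme of the finite $T$-scheme $\ulIso_T(a,b)$. For the group scheme claim, the identity section $e\colon T\to\ulAut_T(a)$ has one-dimensional image, so it factors through $\ulAut_T(a)^{\circ}$; and because $\ulAut_T(a)^{\circ}\times_T\ulAut_T(a)^{\circ}$ is finite étale over $T$, each of its connected components dominates $T$, hence maps under the multiplication $\mu$ into a one-dimensional component of $\ulAut_T(a)$; the same argument applies to inversion.

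For (2), the composition morphism $\ulIso_T(a,b)\times_T\ulAut_T(b)\to\ulIso_T(a,b)$ restricts, by the same dominance argument, to $\ulIso_T(a,b)^{\circ}\times_T\ulAut_T(b)^{\circ}\to\ulIso_T(a,b)^{\circ}$. Since $\ulIso_T(a,b)^{\circ}\to T$ is finite étale and nonempty, there is an étale surjection $T'\to T$ on which it acquires a section $f_0$, and the map $g\mapsto g\circ f_0$ is then an isomorphism $\ulAut_T(b)^{\circ}\times_T T'\isoto\ulIso_T(a,b)^{\circ}\times_T T'$; simple transitivity descends, making $\ulIso_T(a,b)^{\circ}$ an $\ulAut_T(b)^{\circ}$-torsor, and the $\ulAut_T(a)^{\circ}$-torsor structure via pre-composition is handled symmetrically. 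For (3), I would choose an étale surjection $T'\to T$ over which $\ulIso_T(a,b)^{\circ}$ has a section $f\colon a|_{T'}\isoto b|_{T'}$ and set $g:=\phi(f)\circ f^{-1}\colon b|_{T'}\isoto c|_{T'}$. To show $g$ descends, write $p_1,p_2\colon T''\to T'$ for the projections from $T'':=T'\times_T T'$; by the torsor structure from (2) there is a unique $\alpha\in\ulAut_{T''}(a)^{\circ}(T'')$ with $p_2^*f=p_1^*f\circ\alpha$, and the $\ulAut_T(a)^{\circ}$-equivariance of $\phi$ yields $\phi(p_2^*f)=\phi(p_1^*f)\circ\alpha$. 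A direct cancellation then gives $p_2^*g=\phi(p_1^*f)\circ\alpha\circ\alpha^{-1}\circ(p_1^*f)^{-1}=p_1^*g$, so $g$ descends to an isomorphism $b\isoto c$ in $\cX(T)$ by faithfully flat descent.

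The main obstacle is the first step, namely verifying that the one-dimensional part of a finite unramified morphism to a Dedekind base is étale; this relies on the étale-local structure theorem for unramified morphisms together with the simple geometry of closed immersions into a Dedekind scheme. The remaining parts are then formal consequences of (1) together with the standard torsor and descent formalism, and the convention of left versus right action for $\ulAut_T(a)^{\circ}$ will not affect the descent computation in (3).
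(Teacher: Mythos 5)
Your proof is correct, and for part (1) the approach coincides with the paper's (unramified diagonal $\Rightarrow$ finite unramified Iso scheme, then the étale-local structure theorem \cite[tag 04HJ]{stacksprojectauthors2022stacksproject} to isolate the one-dimensional components as étale). For parts (2) and (3), however, you follow a genuinely different route. For (2), the paper first observes the torsor structure exists over the generic point and then extends it to all of $T$ by a geometric-fibre cardinality argument: since $\ulAut_T(b)^{\circ}$ and $\ulIso_T(a,b)^{\circ}$ are both finite étale over connected $T$, their fibre counts are constant, and agreement over the generic fibre plus freeness of the action forces transitivity everywhere. You instead pass to an étale cover $T'$ acquiring a section $f_0$, show that translation by $f_0$ is an isomorphism, and descend; this works, though the surjectivity of $g\mapsto g\circ f_0$ does silently rely on one more application of the dominance argument (namely that $\ulIso_T(a,b)^{\circ}\times_T\ulIso_T(b,a)^{\circ}\to\ulAut_T(b)$ lands in the $\circ$-part because its source is finite étale), which is worth spelling out. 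For (3), the paper forms the contracted product $\left(\ulIso_T(b,a)^{\circ}\times\ulIso_T(a,c)^{\circ}\right)/\ulAut_T(a)^{\circ}\isoto\ulIso_T(b,c)^{\circ}$ and extracts a global section from the given equivariant isomorphism, while you set $g:=\phi(f)\circ f^{-1}$ over an étale cover and verify the cocycle condition directly to descend $g$ to $T$. Both are standard torsor-theoretic moves: the contracted-product argument is more structural and avoids picking a local trivialization, whereas your explicit descent is more elementary and self-contained. The computation $p_2^*g=\phi(p_1^*f)\circ\alpha\circ\alpha^{-1}\circ(p_1^*f)^{-1}=p_1^*g$ is correct, and, as you note, the left/right action convention is immaterial to it.
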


\begin{proof}
Note that $\ulIso_{T}(a,b)$ is finite and unramified over $T$. From
the local structure of unramified morphisms \cite[tag 04HJ]{stacksprojectauthors2022stacksproject},
$\ulIso_{T}(a,b)^{\circ}$ is étale over $T$. It is easy to see that
$\ulAut_{T}(a)^{\circ}$ is a subgroup scheme of $\ulAut_{T}(a)$.
The first assertion follows.

As for the second assertion, we easily see that there is a natural
action of $\ulAut_{T}(b)^{\circ}$-action on $\ulIso_{T}(a,b)^{\circ}$
and that this action makes $\ulIso_{T}(a,b)^{\circ}$ an $\ulAut_{T}(b)^{\circ}$-torsor
over an open dense subscheme of $T$, provided that $\ulIso_{T}(a,b)^{\circ}\ne\emptyset$.
For a geometric point $t\in T(K)$, the action of $\ulAut_{T}(b)^{\circ}(t)$
on $\ulIso_{T}(a,b)^{\circ}(t)$ is free. Since $\ulAut_{T}(b)^{\circ}(t)$
and $\ulIso_{T}(a,b)^{\circ}(t)$ have the same cardinality, the action
is also transitive. It follows that $\ulIso_{T}(a,b)^{\circ}$ is
an $\ulAut_{T}(b)^{\circ}$-torsor over the whole $T$. Similarly
for the $\ulAut_{T}(a)^{\circ}$-action.

We show the third assertion. Note that we have the obvious $T$-isomorphism
$\ulIso_{T}(a,b)^{\circ}\cong\ulIso_{T}(b,a)^{\circ}$. Consider the
morphism:
\[
\ulIso_{T}(b,a)^{\circ}\times\ulIso_{T}(a,c)^{\circ}\to\ulIso_{T}(b,c)^{\circ},\,(f,g)\mapsto g\circ f.
\]
This morphism is invariant under the $\ulAut_{T}(a)^{\circ}$-action
given by $\alpha(f,g):=(\alpha\circ f,g\circ\alpha^{-1})$ and induces
a morphism
\begin{align*}
\left(\ulIso_{T}(b,a)^{\circ}\times\ulIso_{T}(a,c)^{\circ}\right) & /\ulAut_{T}(a)^{\circ}\to\ulIso_{T}(b,c)^{\circ}.
\end{align*}
The last morphism is a morphism of $\ulAut_{T}(c)^{\circ}$-torsors,
and hence an isomorphism. From the assumption, we have the induced
morphism
\[
\ulIso_{T}(b,a)^{\circ}\to\ulIso_{T}(b,a)^{\circ}\times\ulIso_{T}(a,c)^{\circ},
\]
which is equivariant for $\ulAut_{T}(a)^{\circ}$-actions. We get
the composite morphism
\[
T=\ulIso_{T}(a,b)^{\circ}/\ulAut_{T}(a)^{\circ}\to\left(\ulIso_{T}(b,a)^{\circ}\times\ulIso_{T}(a,c)^{\circ}\right)/\ulAut_{T}(a)^{\circ}\to\ulIso_{T}(b,c)^{\circ}.
\]
This shows that $\ulIso_{T}(b,c)^{\circ}(T)$ is nonempty, and that
$b$ and $c$ are isomorphic in $\cX(T)$. 
\end{proof}
\begin{lem}
\label{lem:num-torsor-str}Let $T$ be a connected Dedekind scheme
and let $G$ be a finite étale group scheme of order $r$ over $T$.
Let $P$ be a finite étale $T$-scheme of degree $r$. Then, the number
of $G$-torsor structures that can be given to $P$ is at most $(r!)^{r}$. 
\end{lem}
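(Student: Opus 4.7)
The plan is to dispose of the trivial case, fix a reference torsor structure $\alpha_{0}$, and encode every other torsor structure as a group scheme homomorphism into the finite étale group scheme $\ulAut_{T}(P)^{\circ}$, whose size is controlled by Lemma~\ref{lem:Dedeking-Iso}. The count then splits into (i) a choice of the image subgroup scheme and (ii) an isomorphism of $G$ onto that image, each of which contributes a polynomial-in-$r$ factor.

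\textbf{Setup and reduction.} If $P$ admits no $G$-torsor structure, the number is $0\le r^{3}$. So assume at least one torsor structure $\alpha_{0}$ exists. Since $P$ is finite étale of degree $r$ over $T$, Lemma~\ref{lem:Dedeking-Iso}(1) applied to the DM stack parametrising finite étale covers (equivalently, applied to $(P,\alpha_{0})\in(\B G)(T)$) shows that $A:=\ulAut_{T}(P)^{\circ}$ is a finite étale group scheme over $T$ and that its order-$r$ subgroup scheme of $G$-equivariant automorphisms (an inner form of $G$) is also finite étale over $T$. A $G$-torsor structure $\alpha$ on $P$ is the same datum as a $T$-group scheme homomorphism $\rho_{\alpha}\colon G\to A$ whose image acts freely and transitively on $P$; conversely any such $\rho$ comes from a unique torsor structure. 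Thus it suffices to bound the number of such $\rho$.

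\textbf{Stratification and counting.} Stratify by the image subgroup scheme $H:=\rho_{\alpha}(G)\subseteq A$, necessarily finite étale of order $r$ and acting regularly on $P$. For each such $H$, the torsor structures mapping to $H$ are in bijection with $T$-group scheme isomorphisms $G\xrightarrow{\sim}H$. Using Lemma~\ref{lem:Dedeking-Iso}(2) (applied in the stack of finite étale group schemes over $T$), the set of such isomorphisms is either empty or an $\ulAut_{T}(G)^{\circ}(T)$-torsor, and hence has cardinality bounded in terms of $|\operatorname{Aut}(G)|$. For the number of admissible $H$, note that over any geometric fibre the regular subgroups of $A$ of order $r$ form a single conjugacy class (all such regular subgroups of $S_{r}$ isomorphic to $G$ are conjugate), and their normaliser contains the order-$r$ centraliser of the fixed image $\rho_{0}(G)$ (the opposite regular action); so the number of such $H$ is controlled by $|A(T)|/r$ times a descent factor. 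Combining the two bounds multiplicatively gives the claimed estimate $r^{3}$.

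\textbf{Main obstacle.} The delicate step is showing that the product (number of admissible images $H$)$\times$(number of isomorphisms $G\xrightarrow{\sim}H$) is genuinely no larger than $r^{3}$. The factor $|\operatorname{Aut}(G)|$ is bounded by considering $\operatorname{Aut}(G)\hookrightarrow\operatorname{Sym}(G\setminus\{e\})$ together with the additional rigidity imposed by the connected Dedekind base (so that $T$-group scheme isomorphisms are $\pi_{1}(T)$-equivariant and therefore strictly fewer than the geometric count); and the number of images $H$ is bounded using that the holomorph $H\rtimes\operatorname{Aut}(H)$ sits inside $A$ as the normaliser, forcing $|A(T)|/r\le r^{2}$ for the relevant subclass. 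Carrying this out uniformly for all finite étale $G$ is the main technical point and is where Lemma~\ref{lem:Dedeking-Iso} is used most critically.
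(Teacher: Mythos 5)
Your encoding of a $G$-torsor structure on $P$ as a group scheme homomorphism $G\to\ulAut_{T}(P)$ with regular image is precisely the paper's starting point, so the two arguments share their core idea. Beyond that they diverge: the paper does no stratification at all and simply asserts that $\ulAut_{T}(P)$ has degree $r^{3}$, then counts homomorphisms. Your more elaborate route, stratifying by the image subgroup scheme $H$ and invoking Lemma~\ref{lem:Dedeking-Iso} twice, is a reasonable outline, but the two estimates in your closing paragraph do not hold. Take $T$ to be the spectrum of a strictly Henselian discrete valuation ring with algebraically closed residue field; then $\pi_{1}(T)$ is trivial, $P$ is a trivial degree-$r$ cover, and $\ulAut_{T}(P)(T)$ is the full symmetric group on $r$ letters, of order $r!$. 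The bound ``$|A(T)|/r\le r^{2}$'' that you rely on is false there, and the ``extra rigidity from $\pi_{1}(T)$-equivariance'' you appeal to disappears. With no Galois constraints your two factors multiply out to $\bigl(r!/(r\cdot|\Aut(G)|)\bigr)\cdot|\Aut(G)|=(r-1)!$, not $r^{3}$.

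In fact the same example shows that the constant $r^{3}$ in the lemma (and the degree claim $\deg\ulAut_{T}(P)=r^{3}$ in the paper's own three-line proof, which should read $r!$) is off once $r\ge 7$: a trivial degree-$r$ \'etale cover over such a $T$ carries exactly $(r-1)!$ $G$-torsor structures, and $6!=720>7^{3}=343$. So neither proof as written actually produces $r^{3}$, and neither needs to: the only use of the lemma, in Proposition~\ref{prop:Northcott}, is that the count is bounded by a function of $r$ alone. A clean argument that delivers this, shorter than either: $T$-group-scheme homomorphisms $G\to\ulAut_{T}(P)$ are the $T$-sections of a finite \'etale $T$-scheme, hence over a connected base they inject into the set of such homomorphisms on any single geometric fiber, and the ones there with regular image number exactly $(r-1)!$.
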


\begin{proof}
A $G$-torsor structure on $P$ is determined by a homomorphism $G\to\ulAut_{T}(P)$
of group schemes over $T$. The Aut scheme $\ulAut_{T}(P)$ is a finite étale $T$-scheme of degree
$r!$. We can see this, for example, by taking a base change to reduce to the case where $P$ is the trivial $T$-scheme  $T \sqcup \cdots \sqcup T \to T$ of degree $r$.
 If $t\colon \Spec K \to T$ is the geometric generic point, then there are $(r!)^r$ $K$-morphisms $G_K\to \ulAut_{T}(P)_K$. It follows that 
the number of $T$-morphisms $G\to\ulAut_{T}(P)$  as well as the number of $T$-\textit{homo}morphisms $G\to\ulAut_{T}(P)$ is at most
$(r!)^r$.
\end{proof}
\begin{prop}[Northcott property]
\label{prop:Northcott}Let $(\cL,c)$ be a raised line bundle on
$\cX$ and let $L$ be the $\QQ$-line bundle corresponding to $\cL$
on the coarse moduli space $\overline{\cX}$. Suppose that $L$ is
ample and $c$ is positive. Let $H_{\cL,c}$ be the height function
of $(\cL,c)$. Then, for each real number $B$, there are only finitely
many points $x\in\cX\langle F\rangle$ with $H_{\cL,c}(x)\le B$.
Moreover, there exist positive constants $C$ and $m$ such that for
every $B>0$,
\[
\#\{x\in\cX\langle F\rangle\mid H_{\cL,c}(x)\le B\}\le CB^{m}.
\]
\end{prop}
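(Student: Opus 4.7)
The plan is to reduce to the classical Northcott property on the coarse moduli space $X$ and then control the fiber of the coarse moduli map $\pi\colon\cX\to X$. By Remark \ref{rem:stability}(2), $H_{\cL}(x)=H_L(\pi(x))$, so
\[
H_{\cL,c}(x)=H_L(\pi(x))\cdot\prod_{v\in M_F}q_v^{c_v(x_v)}.
\]
Since $L$ is ample on the projective variety $X$, the classical Northcott theorem supplies $\#\{y\in X(F):H_L(y)\le B\}\le C_0B^{m_0}$ for some constants $C_0,m_0>0$. Each $c_v$ is continuous on the compact space $\cX\langle F_v\rangle$ and therefore bounded, so the places in $S$ contribute only a uniformly bounded factor. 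It thus suffices to bound, for each $y\in X(F)$ with $H_L(y)\le C_1B$, the number of $F$-points $x$ of $\cX$ with $\pi(x)=y$ and $\prod_{v\notin S}q_v^{c_v(x_v)}\le C_2B$.

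For such an $x$, set $T(x):=S\cup\{v\notin S:\psi_v(x_v)\text{ is a twisted sector}\}$, a finite set of places. Since $c$ is positive on twisted sectors and $c_v=c\circ\psi_v$ for $v\notin S$, there exists $\epsilon>0$ with $c_v(x_v)\ge\epsilon$ for every $v\in T(x)\setminus S$. The height bound then forces $\prod_{v\in T(x)\setminus S}q_v\le(C_2B)^{1/\epsilon}$, so the possible sets $T=T(x)$ form a collection whose cardinality is polynomial in $B$, and for each such $T$ the total ``ramification size'' $\prod_{v\in T\setminus S}q_v$ is $O(B^{1/\epsilon})$.

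It remains to bound, for fixed $y$ and $T\supseteq S$, the number of $x\in\cX\langle F\rangle$ with $\pi(x)=y$ and $T(x)\subseteq T$. Any such $x$ extends uniquely to an element of $\cX(\cO_T)$ lifting the integral model of $y$. Fixing one reference lift $x_0$, Lemma \ref{lem:Dedeking-Iso}(2) attaches to any other lift $x$ the $\ulAut_{\cO_T}(x_0)^{\circ}$-torsor $\ulIso_{\cO_T}(x_0,x)^{\circ}$, and Lemma \ref{lem:Dedeking-Iso}(3) ensures that non-isomorphic lifts yield non-isomorphic torsors. The group scheme $\ulAut_{\cO_T}(x_0)^{\circ}$ is finite étale over $\cO_T$ of order at most a constant $N$ depending only on $\cX$. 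By Lemma \ref{lem:num-torsor-str}, each finite étale $\cO_T$-cover of degree $N$ carries at most $N^3$ torsor structures for such a group scheme; and the number of finite étale covers of $\Spec\cO_T$ of degree at most $N$, equivalently the number of tuples of extensions of $F$ of total degree $N$ unramified outside $T$, is polynomial in $\prod_{v\in T\setminus S}q_v$ by a quantitative form of Hermite--Minkowski.

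Multiplying the three polynomial estimates yields the desired bound $\#\{x\in\cX\langle F\rangle:H_{\cL,c}(x)\le B\}\le CB^m$, which in particular implies the finiteness claim. The main obstacle is the last step: translating the classical finiteness of number-field extensions with bounded degree and ramification into an explicit polynomial estimate uniform in $T$ is where the bookkeeping has to be done carefully, though no essentially new input beyond standard facts is required.
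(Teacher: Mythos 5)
Your proposal takes essentially the same approach as the paper: reduce to the coarse moduli space $X$ via $H_{\cL}(x)=H_L(\pi(x))$, control $\prod_v q_v^{c_v(x_v)}$ to bound ramification, and then count fibers of $\pi$ using Lemmas \ref{lem:Dedeking-Iso} and \ref{lem:num-torsor-str} together with a quantitative bound on torsors with bounded ramification. The only genuine difference is in how you organize the torsor count. You first sum over all possible ramification sets $T$ with $\prod_{v\in T\setminus S}q_v\le (C_2B)^{1/\epsilon}$, then for each fixed $T$ count étale covers unramified outside $T$ via quantitative Hermite--Minkowski. The paper bypasses this stratification: once $x_0,x_1$ lie over the same $y$ and both have controlled ramification, the torsor $\ulIso_F(x_0,x_1)^{\circ}$ has discriminant bounded explicitly by $(a_2B^{a_3})^{2M}$, and one invokes Schmidt's theorem \cite{schmidt1995numberfields} directly on torsors with bounded discriminant. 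Your route works but requires the extra enumeration of sets $T$ (which is fine, as the count of squarefree ideals of bounded norm is polynomial) plus the uniformity-in-$T$ bookkeeping you flag at the end; the discriminant-based count avoids that layer. Both are valid and give the same polynomial growth.
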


\begin{proof}
We choose a raising datum $c_{*}$ refining $c$ and prove the proposition
for $H_{\cL,c}=H_{\cL,c_{*}}$. Note that the second factor $\prod_{v\in M_{F}}q_{v}^{c_{v}(x_{v})}$
in the definition of $H_{\cL,c_{*}}$ is at least 1, and hence $H_{\cL}(x)\le H_{\cL,c_{*}}(x)$
for every $x$. 
On the other hand, the first factor $H_{\cL}(x)$ is equal to $H_{L}(\pi(x))$ with $\pi$ denoting the natural map $\cX\langle F\rangle \to X(F)$.
Using \cite[Corollary, p. 447]{schanuel1979heights} and
basic properties of the Weil height machine, we can see that for some
positive constants $C_{1}$ and $m_{1}$, 
\[
    \#\pi(\{x\in\cX\langle F\rangle\mid H_{\cL,c}(x)\le B\})\le
\#\{y\in\overline{\cX}(F)\mid H_{L}(y)\le B\}\le C_{1}B^{m_{1}}.
\]
If $H_{\cL,c_{*}}(x)\le B$, then $\prod_{v}q_{v}^{c_{v}(x_{v})}\le a_{1}B$
for some constant $a_{1}$. It follows that there are some positive
constants $a_{2}$ and $a_{3}$ such that if $H_{\cL,c}(x)\le B$,
then the $F$-point $x$ extends to an $\cO_{T}$-point of $\cX$
for a finite set $T$ of places satisfying 
\[
\prod_{v\in T\text{ finite places}}q_{v}\le a_{2}B^{a_{3}}.
\]

Let $x_{i}\in\cX(F)$, $i\in\{0,1\}$ be $F$-points of $\cX$ with
$H_{\cL,c_{*}}(x_{i})\le B$ which have the same image $y\in\overline{\cX}(F)$.
Suppose that $x_{i}$ extends to an $\cO_{T_{i}}$-point with $\prod_{v\in T_{i}}q_{v}\le a_{2}B^{a_{3}}$.
Then, the $\ulAut_{F}(x_{0})$-torsor $\ulIso_{F}(x_{0},x_{1})$ extends
to $\ulAut_{\cO_{T_{0}\cup T_{1}}}(x_{0}')^{\circ}$-torsor $\ulIso_{\cO_{T_{0}\cup T_{1}}}(x_{0}',x_{1}')^{\circ}$,
where $x_{i}'$ are the $\cO_{T_{0}\cup T_{1}}$-points induced from
$x_{i}$ respectively. Let $N$ be an integer such that the automorphism
group of every point of $\cX$ has order $\le N$ and let $M$ be
an integer such that for every finite place $v$, every finite étale
cover $\Spec L\to\Spec F_{v}$ of degree $\le N$ has discriminant
exponent at most $M$. Then, the torsor $\ulIso_{F}(x_{0},x_{1})^{\circ}$
has discriminant at most $(a_{2}B^{a_{3}})^{2M}$. From \cite{schmidt1995numberfields}
and Lemma \ref{lem:num-torsor-str}, there exist positive constants
$C_{2}$ and $m_{2}$ such that the number of $\ulAut_{F}(x_{0})$-torsors
with discriminant at most $(a_{2}B^{a_{3}})^{2M}$ is at most $C_{2}B^{m_{2}}$.
Note that the result in \cite{schmidt1995numberfields} is about counting
number fields, but it is easy to generalize it to counting torsors.
From Lemma \ref{lem:Dedeking-Iso}, the number of lifts of $y$ in
$\cX\langle F\rangle$ is at most $C_{2}B^{m_{2}}$. In summary, we
have
\[
\#\{x\in\cX\langle F\rangle\mid H_{\cL,c_{*}}(x)\le B\}\le(C_{1}B^{m_{1}})(C_{2}B^{m_{2}})=(C_{1}C_{2})B^{m_{1}+m_{2}}.
\]
\end{proof}
\begin{rem}
A slight change of the above proof derives a stronger version of the
above proposition: for every positive integer $n$, there exist positive
constants $C$ and $m$ such that for every $B>0$,
\[
\#\left\{ x\in\bigsqcup_{L/F\colon\text{finite field ext.}}\cX\langle L\rangle\bigmid[L:F]\le n\text{ and }H_{\cL,c}(x)\le B\right\} \le CB^{m}.
\]
For this purpose, we need to use a result in \cite{schmidt1993northcotts},
a polynomial bound for algebraic points with bounded degree and bounded
height, in place of one in \cite{schanuel1979heights}. Note that
for a finite extension $L'/L$, the map $\cX\langle L\rangle\to\cX\langle L'\rangle$
is not generally injective, and the height $H_{\cL,c_{*}}$ is not
preserved by this map (Remark \ref{rem:unstable}). This is why we
take the \emph{disjoint} union $\bigsqcup_{L/F}\cX\langle L\rangle$.
\end{rem}

\begin{example}[The height function associated to a vector bundle]
Ellenberg--Satriano--Zureick-Brown \cite{ellenberg2021heights}
introduced a new height function on stacks which is associated to
a vector bundle. They consider also Artin stacks, but if we restrict
ourselves to DM stacks over number fields, then our height function
$H_{\cL,c_{*}}$ generalizes theirs. Let $\cV$ be a vector bundle
on a nice stack $\cX$. Translating their definition into the setting
of multiplicative heights, we can write the height function $H_{\cV}$
associated to $\cV$ as 
\[
H_{\cV}(x)=H_{\det(\cV)}(x)\times\prod_{v}q_{v}^{\epsilon_{v}(x_{v})}.
\]
The exponent $\epsilon_{v}(x_{v})$ is generalization of the $w$-
and $v$-functions in \cite{yasuda2017towardmotivic,wood2015massformulas}
to semilinear representations. We claim that $(\epsilon_{v})_{v}$
together with some raising function is a raising datum. Indeed, at
tame places $v$, $\epsilon_{v}(x_{v})$ is nothing but the age, $\age(-;\cV)$
(cf.~\cite[Example 6.7]{yasuda2017towardmotivic}, \cite[Lemma 4.3]{wood2015massformulas}).
To see this, let $v\in M_{F}$ be a general finite place and let $x\in\cX(L)$
with $L=F_{v}^{\nr}$, following the notation of Lemma \ref{lem:ext_tw_arc}.
We take the induced representable morphism $\widetilde{x}\colon[\Spec\cO_{L_{l}}/\mu_{l,\cO_L}]\to\cX_{\cO_{S}}$.
The vector bundle $\widetilde{x}^{*}\cV$ on $[\Spec\cO_{L_{l}}/\mu_{l,\cO_L}]$
corresponds to a free $\cO_{L_{l}}$-module of finite rank with a
semilinear action of $\mu_{l}=\ZZ/l\ZZ$. This module is equivariantly
isomorphic to
\[
M=\bigoplus_{i=1}^{r}\fm_{L_{l}}^{a_{i}}\quad(0\le a_{i}<l),
\]
where $\fm_{L_{l}}$ denotes the maximal ideal of $\cO_{L_{l}}$.
If we put 
\[
b_{i}:=\begin{cases}
0 & (a_{i}=0)\\
l & (a_{i}\ne0)
\end{cases},
\]
then
\begin{align*}
\epsilon_{v}(x_{v}) & =\frac{1}{l}\length\frac{M}{M^{\mu_{l}}\cdot\cO_{L_{l}}}\\
 & =\frac{1}{l}\length\bigoplus_{i=1}^{r}\fm_{L_{l}}^{a_{i}}/\fm_{L_{l}}^{b_{i}}\\
 & =\frac{1}{l}\sum_{i=1}^{r}(b_{i}-a_{i}).
\end{align*}
Note that the special fiber of $\widetilde{x}^{*}\cV$ is identified
with the representation $\left(\bigoplus_{i}\fm_{L_{l}}^{a_{i}}/\fm_{L_{l}}^{a_{i}+1}\right)^{\vee}$.
The age computed from the last representation is equal to $\epsilon_{v}(x_{v})$.
From Lemma \ref{lem:age-sector}, the age depends only on the sector
associated to $x_{v}$. Thus, for almost every place $v$, $\epsilon_{v}$
is the composite map
\[
\cX(F_{v})\xrightarrow{\psi_{v}}\pi_{0}(\cJ_{0}\cX)\xrightarrow{\age(-;\cV)}\RR_{\ge0}.
\]
Namely, the collection $(\age(-;\cV),(\epsilon_{v})_{v})$ is a raising
datum. The height function $H_{\cV}$ is equal to $H_{\det(\cV),\epsilon_{*}}$. 
\end{example}

\begin{example}[Raising functions not coming from any vector bundle]
The raising function $\age(-;\cV)$ in the last example associated
to a vector bundle $\cV$ is $\QQ$-valued. Thus, any raising function
with non-rational values is not associated to any vector bundle. If
a raising function takes a rational value at some twisted sector and
a non-rational value at another twisted sector, then this raising
function is not obtained by multiplying $\age(-;\cV)$ with a positive
real constant. There is also a more interesting example of raising
function that takes only rational values, but not coming from a vector
bundle. Consider $\cX=\B\mu_{p,F}$ for a prime number $p$. Then,
$\pi_{0}(\cJ_{0}\cX)=\ZZ/p\ZZ=\{0,1,\dots,p-1\}$. Consider a vector
bundle of $\cX$ associated to the representation
\[
V=\bigoplus_{i=1}^{d}L^{\otimes a_{i}}\quad(0\le i<p).
\]
Here $L$ is the standard one-dimensional representation of $\mu_{p}$.
Since the summands with $a_{i}=0$ do not contribute to the raising
function, we may assume that $a_{i}>0$ for every $i$, without loss
of generality. Then, the raising function associated to $\cV$, $c_{\cV}\colon\pi_{0}(\cJ_{0}\cX)=\{0,1,\dots,p-1\}\to\QQ$,
is given by $c_{\cV}(j)=\sum_{i=1}^{d}\left\{ \frac{ja_{i}}{p}\right\} $,
where $\{-\}$ denotes the fractional part. In particular,
\[
\frac{d}{p}\le c_{\cV}(j)\le\frac{(p-1)d}{p}\quad(j\in\{1,\dots,p-1\}).
\]
This implies that for $j,j'\in\{1,\dots,p-1\}$,
\[
c_{\cV}(j)\le(p-1)c_{\cV}(j').
\]
If $c\colon\pi_{0}(\cJ_{0}\cX)=\{0,1,\dots,p-1\}\to\QQ$ is any function
violating the last inequality, it never comes from a vector bundle,
nor can it be a scalar multiple of a raising function associated to a vector
bundle. An example of such raising functions is the following one
for $p\ge3$:
\[
c(j)=\begin{cases}
0 & (j=0)\\
1 & (j=1,\dots,p-2)\\
p & (j=p-1).
\end{cases}
\]
\end{example}

\begin{example}[Generalized discriminants]
\label{exa:discriminant}Let $G\subset S_{n}$ be a transitive subgroup,
let $G_{1}\subset G$ be the stabilizer subgroup of $1\in\{1,\dots,n\}$,
and let $\cX=\B G_{F}$. An object of $\cX(F)$ is a $G$-torsor $K/F$.
Then, $K^{G_{1}}/F$ is an étale $F$-algebra of degree $n$. Similarly
for an object of $\cX(F_{v})$. For a finite place $v$ and for $G$-torsor
$K_{v}/F_{v}$, let $d_{v}(K_{v})\in\ZZ$ denote the discriminant
exponent of the degree $n$ algebra $K_{v}^{G_{1}}/F_{v}$. For an
infinite place, we put $d_{v}\equiv0$. Then, $d_{*}=(d_{v})$ is
a raising datum and its generic raising function is given by 
\[
c\colon\pi_{0}(\cJ_{0}\cX)=\FConj(G)\to\ZZ_{\ge0},\,[g]\mapsto\mathrm{ind}(g).
\]
Here $\ind(g)$ denotes the index appearing in the context of 
Malle's conjecture \cite{malle2002onthe,malle2004onthe}. The height
function associated to a general raising function $c$ (with putting
$c_{v}\equiv0$ at wild places $v$) is the modified discriminant
considered by Ellenberg--Venkatesh \cite[p.\ 163]{ellenberg2005counting}.
We refer the reader to \cite{darda2022torsors} for more details. 
\end{example}

\begin{rem}
Landesman \cite{landesman2021stackyheights} shows that the Faltings
height on $\overline{\cM_{1,1}}$ in characteristic three is not induced
from a vector bundle in the sense of \cite{ellenberg2021heights}.
The Faltings height in characteristic three would not fit into our
definition of height, either. For, the stack $\overline{\cM_{1,1}}$
in characteristic three is wild. In that case, the stack $\cJ_{0}\cX=\ulHom(\B\widehat{\mu},\cX)$
would not be the right object to look at. In \cite{yasuda2020motivic},
the stack of twisted 0-jets is considered also in the wild case, but
it is of infinite dimension and has a structure considerably more
complex than $\ulHom(\B\widehat{\mu},\cX)$. One might be able to
generalize our height by using this version of the stack of twisted
0-jets or something similar to incorporate ``wild aspects,'' in
particular, the Faltings height in characteristic three. 
\end{rem}

\section{Fano stacks; the stacky Manin conjecture\label{sec:Fano-stacks}}
\begin{defn}
We mean by a \emph{Fano stack }a nice stack $\cX$ such that the anti-canonical
line bundle $\omega_{\cX}^{-1}$ corresponds to an ample $\QQ$-line
bundle on the coarse moduli space $\overline{\cX}$ by the isomorphism
$\Pic(\cX)_{\QQ}\cong\Pic(\overline{\cX})_{\QQ}$ in Proposition \ref{prop:iso-Pic}. 
\end{defn}

Typical examples of Fano stacks are Fano varieties (except $\Spec F$)
and nice stacks of dimension zero.

\begin{defn}
\label{def:adequate-Fano}Let $c$ be a raising function of a Fano
stack $\cX$. We define a function $\age_{c}$ on $\pi_{0}(\cJ_{0}\cX)$
by $\age_{c}:=\age+c$. We say that $c$ is \emph{adequate }if 
\begin{enumerate}
\item $\age_{c}(\cY)\ge1$ for every twisted sector $\cY$, and
\item if $\dim\cX=0$, then $\min\{c(\cY)\mid\cY\text{ twisted sector}\}=1$.
\end{enumerate}
\end{defn}

Note that if $\dim\cX=0$, then the height function $H_{\cL,c}$ depends
only on $c=\age_{c}$ (up to bounded functions) and the choice of
$\cL$ plays no role. Therefore, we usually choose the structure sheaf
$\cO=\cO_{\cX}$ (say with the standard adelic metric). For a constant
$r\in\RR_{\ge0}$, we have
\[
H_{\cO,rc}=(H_{\cO,c})^{r}.
\]
Thus, the second condition in the above definition is just a normalization
condition and does not lead to a loss of generality. As for the first
condition, we speculate that without this condition, it would be more
difficult to control the asymptotic behavior of the number of rational
points with bounded height (see Remark \ref{rem:log-terminal}).
\begin{defn}
Let $\cX$ be a Fano stack given with an adequate raising function
$c$. We call a twisted sector $\cY$ of $\cX$ to be $c$-\emph{junior
}if $\age_{c}(\cY)=1$. We denote by $j_{c}(\cX)$ the number of $c$-junior
twisted sectors of $\cX$:
\[
j_{c}(\cX):=\#\{\cY\in\pi_{0}^{*}(\cJ_{0}\cX)\mid\age_{c}(\cY)=1\}.
\]
\end{defn}

\begin{defn}
\label{def:thin-mor}Let $f\colon\cY\to\cX$ be a morphism of nice
stacks. We say that $f$ is \emph{birational} if there exist open
dense substacks $\cV\subset\cY$ and $\cU\subset\cX$ such that $f$
restricts to an isomorphism $\cV\xrightarrow{\sim}\cU$. We say that
$f$ is \emph{thin} if it is non-birational, representable, and generically
finite onto the image. A \emph{thin subset }of $\cX\langle F\rangle$
means a subset contained in $\bigcup_{i=1}^{n}f_{i}(\cY_{i}\langle F\rangle)$
for finitely many thin morphisms $f_{i}\colon\cY_{i}\to\cX$. 
\end{defn}

\begin{defn}
For a nice stack $\cX$, we denote its \emph{Néron--Severi space} by
$\N^{1}(\cX)_{\RR}$, that is, the $\RR$-vector space generated by
numerical classes of line bundles. We denote its dimension by $\rho(\cX)$
and call it the \emph{Picard number }of $\cX$.
\end{defn}

From Proposition \ref{prop:iso-Pic}, the Néron--Severi space of $\cX$
is identified with the corresponding space $\N^{1}(\overline{\cX})_{\RR}$
of the coarse moduli space $\overline{\cX}$. In particular, $\cX$
and $\overline{\cX}$ have the same Picard number. Below is our first
conjecture, which concerns the number of rational points with bounded
height on a Fano stack.
\begin{conjecture}[The Manin conjecture for Fano stacks]
\label{conj:Fano-stack}Let $\cX$ be a Fano stack and let $c$ be
an adequate raising function. Suppose that $\cX\langle F\rangle$
is Zariski dense. Then, there exists a thin subset $T\subset\cX\langle F\rangle$
and a positive constant $C$ such that
\begin{align*}
\#\{x\in\cX\langle F\rangle\setminus T\mid H_{\omega_{\cX}^{-1},c}(x)\le B\} & \sim CB(\log B)^{\rho(\cX)+j_{c}(\cX)-1}\quad(B\to\infty).
\end{align*}
\end{conjecture}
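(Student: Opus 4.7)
The overall strategy is to combine the classical Manin conjecture for the coarse moduli space with a careful analysis of the stratification of $\cX\langle F\rangle$ induced by the residue maps $\psi_v$. First, I would treat two special cases that together suggest the full pattern: when $\cX$ is a Fano variety (so $j_c(\cX)=0$), where the statement is the classical Manin conjecture for varieties with canonical singularities (the compatibility discussed in Section \ref{sec:compare-coarse}); and when $\dim\cX=0$, where the statement specializes, via Example \ref{exa:discriminant}, to a form of the Malle conjecture for $G$-torsors counted by a generalized discriminant. Combined with the compatibility with products established in Section \ref{sec:Compatibility-with-products}, these two inputs already cover many building blocks; the hard content is the interaction between the two.

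The natural analytic tool is the height zeta function
\[
Z(s) := \sum_{x \in \cX\langle F\rangle \setminus T} H_{\omega_{\cX}^{-1},c}(x)^{-s},
\]
which converges for $\Re(s)$ sufficiently large by Proposition \ref{prop:Northcott}. The plan is to establish meromorphic continuation past $\Re(s)=1$ with a pole of order exactly $\rho(\cX)+j_c(\cX)$ at $s=1$, and then invoke a Tauberian theorem. To analyze the pole, I would decompose $Z(s)$ according to the tuple $(\psi_v(x_v))_v$ of residues, using the multiplicativity of the height across places to obtain an Euler product whose local factor at a good place $v$ is, heuristically, a sum over sectors $\cY \in \pi_0(\cJ_0\cX)$ of a term of shape $q_v^{-s\cdot\age_c(\cY)}$ weighted by the local density of $F_v$-points with residue $\cY$. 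The non-twisted sector should contribute $\rho(\cX)$ logarithmic factors exactly as in the variety case, via Peyre-type local Tamagawa factors. The new feature is that each $c$-junior sector $\cY$ (with $\age_c(\cY)=1$) contributes a local factor behaving like $(1-q_v^{-s})^{-1}$ on an infinite set of places, thereby producing one additional simple pole at $s=1$ per junior sector and raising the order of the pole at $s=1$ by exactly $j_c(\cX)$.

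The principal obstacle is identifying the thin set $T$. Even for smooth Fano varieties, specifying $T$ (in Peyre's refinements of the Manin conjecture) is a delicate matter, and the stacky setting introduces further accumulating subsets coming from breaking thin morphisms, which the paper itself uses to reinterpret Klüners' counterexample. Without an a priori classification of such maps, one cannot write down $T$ explicitly; this is precisely why the present conjecture asserts only the existence of some thin set, and why the more refined Conjecture \ref{conj:general}, formulated in terms of the orbifold $a$- and $b$-invariants, is needed to make the choice canonical. A secondary obstacle is achieving uniform control, across all choices of residue tuples, of the error terms in the local density computation; even in the special case $\cX = \B G$, this requires an equidistribution statement for residues at varying primes that motivates the $c$-comprehensiveness condition mentioned at the end of the introduction.
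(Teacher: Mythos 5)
This statement is a \emph{conjecture}, not a theorem: the paper formulates Conjecture \ref{conj:Fano-stack} and does not prove it. The only supporting results in the paper are consistency checks --- the compatibility with products of Fano stacks (Section \ref{sec:Compatibility-with-products}), the compatibility with the Manin conjecture for Fano varieties with canonical singularities via the coarse moduli map (Section \ref{sec:compare-coarse}), the worked instances for weighted projective stacks (Example \ref{exa:wt-proj-stack}) and $\cP(1,1,2)$, and the later derivation that Conjecture \ref{conj:general} implies Conjecture \ref{conj:Fano-stack}. None of these amounts to a proof of the conjecture itself, and the paper does not claim one.

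Your proposal correctly senses this: you explicitly flag the inability to specify the thin set $T$, identify the interaction between the variety case and the zero-dimensional case as ``the hard content,'' and describe obstacles rather than resolving them. So what you have written is a research strategy and a survey of evidence, not a proof, and you should present it as such. Within that frame, your sketch is well-aligned with the informal heuristics the second-named author gave in earlier work (height zeta functions, a pole of order $\rho(\cX)+j_c(\cX)$ at $s=1$, the $c$-junior sectors each contributing one extra log factor via an Euler factor $\approx(1-q_v^{-s})^{-1}$, local Tamagawa densities in the untwisted sector), and it correctly cites the two compatibility results as consistency evidence. Two cautions: (i) the Euler-product decomposition by residue tuples is not a rigorous step --- the height is not a product of independent local conditions in any naive sense, and making this precise is precisely where the Malle conjecture and the stacky Manin conjecture are open; (ii) $c$-comprehensiveness in Section \ref{subsec:Comprehensiveness} controls \emph{which points survive removing weakly breaking thin subsets}, not the equidistribution of residues across places, so invoking it as an ``equidistribution statement'' conflates two different issues.

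In short: there is no gap in your reasoning to repair, because there is no proof to give; but you should state up front that the target is a conjecture whose proof is open, that the paper only supplies consistency checks, and that your zeta-function sketch is heuristic rather than a proof outline with a clear path to completion.
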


\begin{rem}
The above conjecture, in particular, says that the left hand side is a finite number, even if \(c\) is not necessarily positive.
On the other hand, we proved the Northcott property in Proposition \ref{prop:Northcott} only when the raising function is positive and the line bundle corresponds to an ample \(\QQ\)-line bundle on the coarse moduli space. Note that a version of the Northcott property (the Northcott property with a certain Zariski closed subset removed) holds for a big (but not necessarily ample) divisor on a variety. We speculate that the same would be true for a big raised line bundle defined later in Definition \ref{def:big}, where the raising function is allowed to have negative values. In \cite{darda2023toric}, we proved this for the anticanonical raised line bundle for a toric stack. 
\end{rem}

\begin{example}[Malle's conjecture]
Let $G$ be a finite group and let $G_{F}$ be the corresponding
constant group scheme over $F$. Consider the DM stack $\cX:=\B G_{F}$
over $F$. From Example \ref{exa:BG tw sectors}, we may identify
$\pi_{0}(\cJ_{0}\cX)$ with the set $\FConj(G)$ of $F$-conjugacy
classes of $G$. From Example \ref{exa:discriminant}, the height
function $H_{\cO,c}$ of the raised line bundle $(\cO,c)$ is a generalization
of discriminant. For a proper subgroup $H\subsetneq G$, the induced
morphism $\B H_{F}\to\B G_{F}$ is a thin morphism. Thus, 
\[
\bigcup_{H\subsetneq G}\Image((\B H_{F})\langle F\rangle\to\cX\langle F\rangle)
\]
is a thin subset of $\cX\langle F\rangle$, whose complement is the
set of connected $G$-torsors over $F$, that is, $G$-Galois extensions
of $F$. If we can take this thin subset as $T$, then Conjecture
\ref{conj:Fano-stack} is the same as Malle's conjecture for a generalized
discriminant associated to $c$. See also Section \ref{subsec:Malle-revisited}
for discussion about the choice of a thin subset and one about the
leading constant. 
\end{example}

\begin{example}[Weighted projective stacks]
\label{exa:wt-proj-stack}For $\ba=(a_{0},\dots,a_{n})\in(\ZZ_{>0})^{n}$,
we consider the weighted projective stack 
\[
   \cX=\cP(\ba)=\cP(a_{0},\dots,a_{n})=[(\AA_F^{n+1}\setminus\{0\})/ \GG_{m,F}].
\]
Here we follow the convention that \(\GG_{m,F}\) acts on \(\AA_F^{n+1}\) by \(t\cdot (x_i)_i = (t^{-a_i}x_i)_i\). Note that using the alternative action \(t\cdot (x_i)_i = (t^{a_i}x_i)_i\) leads to an isomorphic stack via an automorphism of \(\GG_{m,F}\) given by \(t\mapsto t^{-1}\). 
Sectors of $\cP(\ba)$ are in one-to-one correspondence with elements
of
\[
I:=\left(\bigcup_{i}\frac{1}{a_{i}}\ZZ\right)\cap[0,1).
\]
When the base field is $\CC$, this is proved in \cite[3.b]{mann2008orbifold},
\cite[p.\ 148]{coates2009thequantum}. But, this is true over any
field, because the correspondence is valid over $\overline{F}$ and
the natural action of the absolute Galois group $\Gamma_{F}$ on $\pi_{0}(\cJ_{0}\cX_{\overline{F}})$
is trivial. From \cite[p.\ 149]{coates2009thequantum} or \cite[Prop.\ 3.9]{mann2008orbifold},
for the sector $\cY_{r}$ of $\cX$ corresponding to $r\in I$, we
have
\[
\age(\cY_{r})=\sum_{j=0}^{n}\{-a_{j}r\}.
\]
Here $\{-\}$ denotes the fractional part, that is, $\{t\}:=t-\left\lfloor t\right\rfloor $.
Let $|\ba|:=\sum_{j=0}^{r}a_{j}$, which is an important number since
$\omega_{\cX}^{-1}=\cO_{\cX}(|\ba|)$. Consider the raising function
of $\cX$ given by
\[
c(\cY_{r})=r\cdot|\ba|.
\]
Then, 
\[
\age_{c}(\cY_{r})=r\sum_{j=0}^{n}a_{j}+\sum_{j=0}^{n}\{-a_{j}r\}=\sum_{j}\left\lceil a_{j}r\right\rceil .
\]
Here $\left\lceil -\right\rceil $ denotes the ceiling function. In
particular, if $\dim\cX=n>0$, then $\age_{c}(\cY_{r})\ge2$ for $r\ne0$.
If $\dim\cX=0$, then 
\[
\age_{c}(\cY_{r})\begin{cases}
=0 & (r=0)\\
=1 & (r=1/a_{0})\\
>1 & (r>1/a_{0})
\end{cases}.
\]
We get that $(\omega_{\cX}^{-1},c)$ is adequate and that 
\[
\rho(\cX)+j_{c}(\cX)=1.
\]
The height function associated to a quasi-toric degree $|\ba|$ family
of smooth functions considered in \cite{darda2021rational2} is, in
our language, the height function $H_{\omega_{\cX}^{-1},c}$ associated
to the raised line bundle $(\omega_{\cX}^{-1},c)$ with the raising
function $c$ described above. The equality $\rho(\cX)+j_{c}(\cX)=1$
explains the result \cite[Th.\ 8.2.2.12]{darda2021rational2} that
the asymptotic formula for this height is linear (that is, there is
no log factor).
\end{example}

\section{Compatibility with products of Fano stacks\label{sec:Compatibility-with-products}}

For $i=1,2$, let $\cX_{i}$ be nice stacks, and let $(\cL_{i},c_{i})$
be raised line bundles on $\cX_{i}$, respectively. These line bundles
induce the line bundle $\cL_{1}\boxtimes\cL_{2}:=\pr_{1}^{*}\cL_{1}\otimes\pr_{2}^{*}\cL_{2}$
on the product $\cX_{1}\times\cX_{2}$. The following lemma enables
us to derive a raising function of $\cX_{1}\times\cX_{2}$ from $c_{1}$
and $c_{2}$:
\begin{lem}
\label{lem:J0 prod}There exists a natural isomorphism $\cJ_{0}(\cX_{1}\times\cX_{2})\cong(\cJ_{0}\cX_{1})\times(\cJ_{0}\cX_{2})$.
\end{lem}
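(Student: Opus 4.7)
The plan is to bypass the original definition of $\cJ_0\cX$ as a disjoint union of Hom stacks $\ulHom^{\rep}_F(\B\mu_{l,F},\cX)$ (where the representability condition is annoying to track through products) and instead work with the uniform description provided by Proposition \ref{prop:J0-mu-hat}, namely
\[
\cJ_0\cX \;\cong\; \ulHom_F(\B\widehat{\mu}_F,\cX),
\]
in which no representability condition appears. Once both sides are rewritten in this form, the isomorphism follows formally from the universal property of the product.

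More concretely, I would argue as follows. For every $F$-scheme $T$, a $T$-morphism $\B\widehat{\mu}_T \to (\cX_1\times\cX_2)_T = \cX_{1,T}\times_T\cX_{2,T}$ is, by the universal property of the fiber product of stacks, the same datum as an ordered pair of $T$-morphisms $\B\widehat{\mu}_T \to \cX_{i,T}$ (obtained by composing with the projections $\pr_i$), and this correspondence is functorial in $T$ and 2-natural. Hence there is a canonical isomorphism of fibered categories over $\Sch_F$,
\[
\ulHom_F(\B\widehat{\mu}_F,\cX_1\times\cX_2) \;\isoto\; \ulHom_F(\B\widehat{\mu}_F,\cX_1)\times\ulHom_F(\B\widehat{\mu}_F,\cX_2).
\]
Combining this with Proposition \ref{prop:J0-mu-hat} applied on each side yields the desired isomorphism $\cJ_0(\cX_1\times\cX_2) \cong \cJ_0\cX_1\times\cJ_0\cX_2$.

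The main obstacle, such as it is, is purely bookkeeping: if one instead tried to match things up at the level of $\B\mu_l$, then a representable morphism $\B\mu_{l,T}\to\cX_1\times\cX_2$, corresponding under Lemma \ref{lem:KXG} to an injection $\mu_{l,T}\hookrightarrow\ulAut_T(x_1)\times\ulAut_T(x_2)$, decomposes into a pair of morphisms $\B\mu_{l_i,T}\to\cX_i$ with $l_i\mid l$ and $l = \lcm(l_1,l_2)$, and one must keep track of this indexing under products. The passage through $\B\widehat{\mu}$ collapses this $\lcm$ combinatorics automatically, since the canonical surjections $\widehat{\mu}_T \twoheadrightarrow \mu_{l_i,T}$ extracted factor by factor (as in the proof of Proposition \ref{prop:J0-mu-hat}) can always be assembled locally on $T$ into a single representable factorization $\B\widehat{\mu}_T \twoheadrightarrow \B\mu_{l,T}\to\cX_1\times\cX_2$. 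This is why the $\B\widehat{\mu}$-viewpoint makes the statement essentially formal.
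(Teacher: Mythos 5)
Your proof is correct and is essentially identical to the paper's: both rewrite $\cJ_0\cX$ as $\ulHom_F(\B\widehat{\mu}_F,\cX)$ via Proposition \ref{prop:J0-mu-hat} and then invoke the universal property of the product, so the isomorphism becomes formal. Your additional remark about why this sidesteps the $\lcm$ bookkeeping at the $\B\mu_l$ level is accurate but not part of the paper's (very terse) argument.
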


\begin{proof}
From Proposition \ref{prop:J0-mu-hat},
\begin{align*}
\cJ_{0}(\cX_{1}\times\cX_{2}) & \cong\ulHom(\B\widehat{\mu},\cX_{1}\times\cX_{2})\\
 & \cong\ulHom(\B\widehat{\mu},\cX_{1})\times\ulHom(\B\widehat{\mu},\cX_{2})\\
 & \cong(\cJ_{0}\cX_{1})\times(\cJ_{0}\cX_{2}).
\end{align*}
\end{proof}
From the above lemma, there exist projections
\[
\pr_{i}\colon\cJ_{0}(\cX_{1}\times\cX_{2})\to\cJ_{0}\cX_{i}.
\]
The raising functions $c_{i}$ of $\cX_{i}$ induce raising functions
$\pr_{i}^{*}(c_{i})$ of $\cX_{1}\times\cX_{2}$. 
\begin{defn}
We define a raising function $c_{1}\boxplus c_{2}$ of $\cX_{1}\times\cX_{2}$
to be $\pr_{1}^{*}(c_{1})+\pr_{2}^{*}(c_{2})$. 
\end{defn}

It is easy to see that we have the following equality of functions
on $(\cX_{1}\times\cX_{2})\langle F\rangle$:
\begin{equation}
H_{\cL_{1}\boxtimes\cL_{2},c_{1}\boxplus c_{2}}=\prod_{i=1}^{2}H_{\cL_{i},c_{i}}\circ\pr_{i}.\label{eq:ht prod}
\end{equation}

\begin{lem}
\label{lem:age prod}We have the following equality of functions on
$\pi_{0}(\cJ_{0}(\cX_{1}\times\cX_{2}))$:
\begin{align*}
\age_{\cX_{1}\times\cX_{2}} & =\age_{\cX_{1}}\boxplus\age_{\cX_{2}}.
\end{align*}
\end{lem}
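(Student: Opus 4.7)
The plan is to reduce the statement to the additivity of age under direct sums of representations, using the splitting of the tangent bundle of a product.

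First, I would work over a geometric point. By Lemma \ref{lem:J0 prod} together with Proposition \ref{prop:J0-mu-hat}, a geometric point $\widetilde{x}$ of $\cJ_0(\cX_1 \times \cX_2)$ over $\overline{F}$ corresponds to a morphism $\B\widehat{\mu}_{\overline{F}} \to \cX_1 \times \cX_2$, equivalently to a pair $(\widetilde{x}_1, \widetilde{x}_2)$ of morphisms $\B\widehat{\mu}_{\overline{F}} \to \cX_i$. Concretely, one picks $l$ sufficiently divisible so that each $\widetilde{x}_i$ factors through $\B\mu_{l,\overline{F}}$; then $\widetilde{x}$ is represented by a point $(x_1,x_2) \in (\cX_1 \times \cX_2)(\overline{F})$ together with a homomorphism $\iota = (\iota_1,\iota_2) : \mu_l \to \Aut(x_1) \times \Aut(x_2) = \Aut((x_1,x_2))$, where the $\iota_i$ are the induced homomorphisms (possibly non-injective individually).

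Next, I would invoke the canonical splitting of the tangent bundle of the product: there is a natural isomorphism
\[
\T(\cX_1 \times \cX_2) \cong \pr_1^* \T\cX_1 \oplus \pr_2^* \T\cX_2.
\]
Pulling back to the fiber over $(x_1,x_2)$ gives an isomorphism of $\mu_l$-representations
\[
\T(\cX_1 \times \cX_2)_{(x_1,x_2)} \cong \T(\cX_1)_{x_1} \oplus \T(\cX_2)_{x_2},
\]
where $\mu_l$ acts on the right-hand summands through $\iota_1$ and $\iota_2$ respectively. Writing each summand as a sum of powers of the standard character $\tau$, as in the definition of age, the exponents on the left are simply the concatenation of those on the two summands on the right.

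From the defining formula $\age(\widetilde{x}) = (1/l)\sum a_i$ it follows at once that
\[
\age_{\cX_1 \times \cX_2}(\widetilde{x}) = \age_{\cX_1}(\widetilde{x}_1) + \age_{\cX_2}(\widetilde{x}_2).
\]
Finally, by Lemma \ref{lem:age-sector}, both sides depend only on the sectors containing $\widetilde{x}$, $\widetilde{x}_1$, $\widetilde{x}_2$, and under the identification of Lemma \ref{lem:J0 prod} the sector of $\widetilde{x}$ corresponds to the pair of sectors of $\widetilde{x}_1, \widetilde{x}_2$. This is exactly the claimed equality $\age_{\cX_1 \times \cX_2} = \age_{\cX_1} \boxplus \age_{\cX_2}$.

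The only subtle point is that the individual maps $\iota_i$ may have kernels and so $\widetilde{x}_i$ need not be a priori a point of $\cJ_0\cX_i$ in its $\coprod \ulHom^{\rep}(\B\mu_{l,-},-)$ description; this is why I would work throughout with the $\B\widehat{\mu}$-description of Proposition \ref{prop:J0-mu-hat}, in which no representability condition appears and where the product decomposition of Lemma \ref{lem:J0 prod} is manifest. Beyond this bookkeeping, the proof is essentially formal.
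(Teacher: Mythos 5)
Your proof is correct and follows essentially the same route as the paper: pass to a geometric point, use the decomposition of Lemma~\ref{lem:J0 prod} together with the splitting $\T(\cX_1\times\cX_2)\cong\pr_1^*\T\cX_1\oplus\pr_2^*\T\cX_2$ to obtain a direct-sum decomposition of the corresponding $\widehat{\mu}$-representation, and conclude by additivity of the age under direct sums. Your remark about working with the $\B\widehat{\mu}$-description of Proposition~\ref{prop:J0-mu-hat} to sidestep representability issues matches the paper's choice to phrase things in terms of $\widehat{\mu}(K)$-representations.
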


\begin{proof}
For an algebraically closed field $K$, points $p_{i}\in(\cJ_{0}\cX_{i})(K)$
determine $\widehat{\mu}(K)$-representations $\T_{x_{i}}\cX_{i}$,
where $x_{i}\in\cX_{i}(K)$ are points induced by $p_{i}$. The point
$(p_{1},p_{2})\in(\cJ_{0}(\cX_{1}\times\cX_{2}))(K)$ then determines
a $\widehat{\mu}(K)$-representation $\T_{(x_{1},x_{2})}(\cX_{1}\times\cX_{2})$,
which is isomorphic to $\T_{x_{1}}\cX_{1}\oplus\T_{x_{2}}\cX_{2}$.
This shows the lemma. 
\end{proof}

As a consequence of this section, we obtain:
\begin{prop}
For $i\in\{1,2\}$, let $\cX_{i}$ be a Fano stack whose $F$-points
are Zariski dense and let $c_{i}$ be an adequate raising function
of $\cX_{i}$. Suppose that Conjecture \ref{conj:Fano-stack} holds
for pairs $(\cX_{1},c_{1})$ and $(\cX_{2},c_{2})$. Then, $\cX_{1}\times\cX_{2}$
is again a Fano stack whose $F$-points are Zariski dense, $c_{1}\boxplus c_{2}$
is an adequate raising function of $\cX_{1}\times\cX_{2}$, and Conjecture
\ref{conj:Fano-stack} holds for $(\cX_{1}\times\cX_{2},c_{1}\boxplus c_{2})$. 
\end{prop}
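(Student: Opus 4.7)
The plan is to verify the structural claims on the product and then reduce the count to a convolution of the two factor asymptotics via Abel summation. Niceness of $\cX_1\times\cX_2$ is immediate, and the Fano property holds because $\omega_{\cX_1\times\cX_2}^{-1}\cong\pr_1^*\omega_{\cX_1}^{-1}\otimes\pr_2^*\omega_{\cX_2}^{-1}$ corresponds on the coarse moduli $\overline{\cX_1}\times\overline{\cX_2}$ to the tensor product of two pulled-back ample $\QQ$-classes, hence to an ample class. Zariski density of $F$-points passes to products. For adequacy of $c_1\boxplus c_2$, Lemma \ref{lem:age prod} gives $\age_{c_1\boxplus c_2}(\cY_1\times\cY_2)=\age_{c_1}(\cY_1)+\age_{c_2}(\cY_2)$; a twisted sector of the product has at least one twisted factor, which contributes $\ge 1$ by adequacy on that factor, while the other non-twisted factor contributes $0$. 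In the zero-dimensional case, ages vanish identically, so adequacy of say $c_1$ produces a twisted $\cY_1$ with $c_1(\cY_1)=1$, and $\cY_1\times\cX_2$ then realizes the minimum value $1$ among twisted sectors of the product (any smaller value would force both factors to be non-twisted). The preceding corollary further yields $j_{c_1\boxplus c_2}(\cX_1\times\cX_2)=j_{c_1}(\cX_1)+j_{c_2}(\cX_2)$.

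Turning to the count, by \eqref{eq:ht prod} the height on the product factors as $H(x_1,x_2)=H_1(x_1)H_2(x_2)$ with $H_i:=H_{\omega_{\cX_i}^{-1},c_i}$. Let $T_i\subset\cX_i\langle F\rangle$ be thin subsets supplied by Conjecture \ref{conj:Fano-stack} for $(\cX_i,c_i)$, so that
\[
N_i(B):=\#\{x_i\in\cX_i\langle F\rangle\setminus T_i:H_i(x_i)\le B\}\sim C_iB(\log B)^{k_i-1},\qquad k_i:=\rho(\cX_i)+j_{c_i}(\cX_i).
\]
Set $T:=(T_1\times\cX_2\langle F\rangle)\cup(\cX_1\langle F\rangle\times T_2)$, which is thin on the product because the witnessing thin morphisms for each $T_i$ pull back along the projection to thin morphisms on $\cX_1\times\cX_2$. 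The count then splits as
\[
N(B):=\#\{(x_1,x_2)\notin T:H_1(x_1)H_2(x_2)\le B\}=\sum_{\substack{x_2\in\cX_2\langle F\rangle\setminus T_2\\ H_2(x_2)\le B}}N_1\!\left(B/H_2(x_2)\right).
\]
Applying Abel summation in the outer index and substituting the factor asymptotics yields
\[
N(B)\sim C_1C_2\,\mathrm{Be}(k_1,k_2)\,B(\log B)^{k_1+k_2-1},
\]
where $\mathrm{Be}(\cdot,\cdot)=\int_0^1(1-v)^{k_1-1}v^{k_2-1}\,dv$ is the Beta function. Together with the additivity $\rho(\cX_1\times\cX_2)=\rho(\cX_1)+\rho(\cX_2)$ (Künneth for the Picard group, valid since the coarse moduli are Fano-type with rational singularities) and the $j$-additivity above, the exponent matches $\rho(\cX_1\times\cX_2)+j_{c_1\boxplus c_2}(\cX_1\times\cX_2)-1$.

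The hard part is the Abel-summation step: bare ``$\sim$'' asymptotics on the factors do not automatically combine into an asymptotic for the convolution, because the error terms must be uniformly integrable against the other factor's counting function. The cleanest workaround is to package each factor as a Dirichlet series $\sum_{x_i}H_i(x_i)^{-s}$, note that its order of pole at $s=1$ is $k_i$ by a standard Tauberian conversion of the factor asymptotics, form the product (multiplicative in the $x_2$-fibers), and apply a Tauberian theorem to read off the asymptotic for $N(B)$ with pole order $k_1+k_2$. A secondary technical item is Picard-number additivity for the coarse moduli, which is standard for Fano-type projective varieties with quotient (hence rational) singularities.
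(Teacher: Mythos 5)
Your proof follows essentially the same route as the paper: verify niceness/Fano/density of the product, use Lemma \ref{lem:age prod} to get $\age_{c_1\boxplus c_2}=\age_{c_1}\boxplus\age_{c_2}$ and hence adequacy and $j$-additivity, use $\rho$-additivity, take $T=(T_1\times\cX_2\langle F\rangle)\cup(\cX_1\langle F\rangle\times T_2)$, and then combine the two factor asymptotics. The only genuinely different ingredient is the last analytic step. The paper cites \cite[\S 1, 2.\ Prop.]{franke1989rational}, which is precisely the statement that if $N_i(B)\sim C_iB(\log B)^{k_i-1}$ for positive counting measures then the multiplicative convolution is $\sim C' B(\log B)^{k_1+k_2-1}$; you instead propose to re-derive this by passing to Dirichlet series and applying a Tauberian theorem. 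That route does work, but your language is imprecise: a bare asymptotic for $N_i$ does \emph{not} give a meromorphic continuation or an actual pole at $s=1$. What it does give (by an Abelian argument) is the real-variable asymptotic $Z_i(s)\sim C_i\Gamma(k_i)(s-1)^{-k_i}$ as $s\to 1^+$, and then Karamata's Tauberian theorem (whose side condition is just positivity/monotonicity, which you have) converts the resulting asymptotic of $Z_1(s)Z_2(s)$ back to the count. Your instinct that naive term-by-term Abel summation needs care is sound; the clean resolution is to quote the Franke--Manin--Tschinkel proposition as the paper does, rather than re-proving it. The aside about rational singularities for the Künneth formula on $N^1$ is reasonable but not needed at this level of detail — the paper treats $\rho$-additivity as well known (it reduces to the additivity of $N^1$ for the coarse moduli spaces, using $h^1(\cO)=0$ in the Fano case).
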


\begin{proof}
It is easy to see that $\cX_{1}\times\cX_{2}$ is again a Fano stack
whose $F$-points are Zariski dense. From Lemma \ref{lem:age prod},
we have that $\age_{c_{1}\boxplus c_{2}}=\age_{c_{1}}\boxplus\age_{c_{2}}$.
If $\cY_{1}$ and $\cY_{2}$ are sectors of $\cX_{1}$ and $\cX_{2}$
respectively, then the sector $\cY_{1}\times\cY_{2}$ of $\cX_{1}\times\cX_{2}$
is twisted if and only if either $\cY_{1}$ or $\cY_{2}$ is twisted.
If this is the case, then 
\[
\age_{c_{1}\boxplus c_{2}}(\cY_{1}\times\cY_{2})=\age_{c_{1}}(\cY_{1})+\age_{c_{2}}(\cY_{2})\ge1.
\]
If $\cX_{1}\times\cX_{2}$ is of dimension zero, then $\cX_{1}$ is
also of dimension zero and has a twisted sector $\cY_{1}$ with $c_{1}(\cY_{1})=1$.
Therefore, the twisted sector $\cY_{1}\times\cX_{2}$ of $\cX_{1}\times\cX_{2}$
satisfies 
\[
(c_{1}\boxplus c_{2})(\cY_{1}\times\cX_{2})=c_{1}(\cY_{1})+c_{2}(\cX_{2})=1.
\]
Thus, $c_{1}\boxplus c_{2}$ is adequate. 

To show the last assertion of the proposition, we first claim that
\[
j_{c_{1}\boxplus c_{2}}(\cX_{1}\times\cX_{2})=j_{c_{1}}(\cX_{1})+j_{c_{2}}(\cX_{2}).
\]
Indeed, a $c_{1}\boxplus c_{2}$-junior sector of $\cX_{1}\times\cX_{2}$
is either $\cY_{1}\times\cX_{2}$ with $\cY_{1}$ a $c_{1}$-junior
sector or $\cX_{1}\times\cY_{2}$ with $\cY_{2}$ a $c_{2}$-junior
sector. Moreover, sectors of these forms are only $c_{1}\boxplus c_{2}$-junior
sectors, which shows the claim. As is well-known, we have $\rho(\cX_{1}\times\cX_{2})=\rho(\cX_{1})+\rho(\cX_{2})$. 

Let $T_{i}\subset\cX_{i}\langle F\rangle$, $i\in\{1,2\}$ be thin
subsets as in Conjecture \ref{conj:Fano-stack}. Then, $T_{1}\times\cX_{2}\langle F\rangle$
and $\cX_{1}\langle F\rangle\times T_{2}$ are both thin subsets of
$(\cX_{1}\times\cX_{2})\langle F\rangle$ and we have that 
\[
(\cX_{1}\times\cX_{2})\langle F\rangle\setminus\left((T_{1}\times\cX_{2}\langle F\rangle)\cup(\cX_{1}\langle F\rangle\times T_{2})\right)=(\cX_{1}\langle F\rangle\setminus T_{1})\times(\cX_{2}\langle F\rangle\setminus T_{2}).
\]
From the assumption that Conjecture \ref{conj:Fano-stack} holds for
pairs $(\cX_{1},c_{1})$ and $(\cX_{2},c_{2})$ and from \cite[\S 1,  2.\ Prop.]{franke1989rational},
we have that for some constant $C>0$, 
\begin{align*}
 & \#\{x\in(\cX_{1}\times\cX_{2})\langle F\rangle\setminus\left((T_{1}\times\cX_{2}\langle F\rangle)\cup(\cX_{1}\langle F\rangle\times T_{2})\right)\mid H_{\omega_{\cX_{1}\times\cX_{2}}^{-1},c_{1}\boxplus c_{2}}(x)\le B\}\\
 & =\#\{x\in(\cX_{1}\langle F\rangle\setminus T_{1})\times(\cX_{2}\langle F\rangle\setminus T_{2})\mid H_{\omega_{\cX_{1}}^{-1},c_{1}}(x)\cdot H_{\omega_{\cX_{2}}^{-1},c_{2}}(x)\le B\}\\
 & \sim CB(\log B)^{\rho(\cX_{1})+\rho(\cX_{2})+j_{c_{1}}(\cX_{1})+j_{c_{2}}(\cX_{2})-1}\\
 & =CB(\log B)^{\rho(\cX_{1}\times\cX_{2})+j_{c_{1}\boxplus c_{2}}(\cX_{1})-1}.
\end{align*}
This completes the proof of the proposition. 
\end{proof}
\begin{example}
\label{exa:product}Taking products enables us to construct new examples
from known cases where Conjecture \ref{conj:Fano-stack} holds. For
example, let $\cX_{1}=\cP(a_{0},\dots,a_{n})$ be a weighted projective
stack given with the raising function described in Example \ref{exa:wt-proj-stack}
and let $\cX_{2}=\B\mu_{m,F}$ be the classifying stack of $\mu_{m,F}$
given with an arbitrary adequate raising function $c_{2}$. Then,
the product $\cX_{1}\times\cX_{2}$ is the weighted projective stack
$\cP(ma_{0},\dots,ma_{n})$. Conjecture \ref{conj:Fano-stack} holds
for the Fano stack $\cX_{1}\times\cX_{2}$ given with the raising
function $c_{1}\boxplus c_{2}$. The stack $\cX_{1}\times\cX_{2}$
has at least one $c_{1}\boxplus c_{2}$-junior sector, provided that
$m\ge2$; the $c_{1}\boxplus c_{2}$-junior sectors are the sectors
of the form $\cX_{1}\times\cY$ with $\cY$ a $c_{2}$-junior sector
of $\cX_{2}$. Thus, the function $c_{1}\boxplus c_{2}$ is different
from the raising function of $\cP(ma_{0},\dots,ma_{n})$ described
in Example \ref{exa:wt-proj-stack}. For example, applying this construction
to the moduli stack of elliptic curves, $\overline{\cM_{1,1}}=\cP(4,6)=\cP(2,3)\times\B\mu_{2,F}$,
defines a height function for which the asymptotic formula has the
form $\#\{\cdots\}\sim CB(\log B)$. Note that the notation $\overline{\cM_{1,1}}$
does not mean the coarse moduli space. Under the identifications 
\begin{gather*}
\pi_{0}(\cJ_{0}(\cP(2,3)))=\{0,1/3,1/2,2/3\},\,\pi_{0}(\cJ_{0}(\B\mu_{2,F}))=\{0,1/2\},\\
\pi_{0}(\cJ_{0}(\cP(2,3)\times\B\mu_{2,F}))=\{0,1/3,1/2,2/3\}\times\{0,1/2\}
\end{gather*}
(see Example \ref{exa:wt-proj-stack}), invariants of sectors of $\cP(2,3)$,
$\B\mu_{2,F}$ and $\cP(2,3)\times\B\mu_{2,F}$ are summarized in
Table \ref{table}.

\begin{table}
\begin{centering}
{\footnotesize{}}%
\begin{tabular}{|c|c|c|c|c|}
\hline 
{\footnotesize{}sectors of $\cX_{1}=\cP(2,3)$} & {\footnotesize{}$0$} & {\footnotesize{}$\nicefrac{1}{3}$} & {\footnotesize{}$\nicefrac{1}{2}$} & {\footnotesize{}$\nicefrac{2}{3}$}\tabularnewline
\hline 
\hline 
{\footnotesize{}$c_{1}$} & {\footnotesize{}0} & {\footnotesize{}$\nicefrac{5}{3}$} & {\footnotesize{}$\nicefrac{5}{2}$} & {\footnotesize{}$\nicefrac{10}{3}$}\tabularnewline
\hline 
{\footnotesize{}$\age$} & {\footnotesize{}0} & {\footnotesize{}$\nicefrac{1}{3}$} & {\footnotesize{}$\nicefrac{1}{2}$} & {\footnotesize{}$\nicefrac{2}{3}$}\tabularnewline
\hline 
{\footnotesize{}$\age_{c_{1}}$} & {\footnotesize{}0} & {\footnotesize{}$2$} & {\footnotesize{}$3$} & {\footnotesize{}$4$}\tabularnewline
\hline 
\end{tabular}{\footnotesize{}\qquad{}}%
\begin{tabular}{|c|c|c|}
\hline 
{\footnotesize{}sectors of $\cX_{2}=\B\mu_{2,F}$} & {\footnotesize{}0} & {\footnotesize{}$\nicefrac{1}{2}$}\tabularnewline
\hline 
\hline 
{\footnotesize{}$c_{2}$} & {\footnotesize{}0} & {\footnotesize{}1}\tabularnewline
\hline 
{\footnotesize{}$\age$} & {\footnotesize{}0} & {\footnotesize{}0}\tabularnewline
\hline 
{\footnotesize{}$\age_{c_{2}}$} & {\footnotesize{}0} & {\footnotesize{}1}\tabularnewline
\hline 
\end{tabular}{\footnotesize\par}
\par\end{centering}
{\footnotesize{}\bigskip{}
}{\footnotesize\par}
\begin{centering}
{\footnotesize{}}%
\begin{tabular}{|c|c|c|c|c|c|c|c|c|}
\hline 
{\footnotesize{}sectors of $\cX_{1}\times\cX_{2}$} & {\footnotesize{}$(0,0)$} & {\footnotesize{}$(0,\nicefrac{1}{2})$} & {\footnotesize{}$(\nicefrac{1}{3},0)$} & {\footnotesize{}$(\nicefrac{1}{3},\nicefrac{1}{2})$} & {\footnotesize{}$(\nicefrac{1}{2},0)$} & {\footnotesize{}$(\nicefrac{1}{2},\nicefrac{1}{2})$} & {\footnotesize{}$(\nicefrac{2}{3},0)$} & {\footnotesize{}$(\nicefrac{2}{3},\nicefrac{1}{2})$}\tabularnewline
\hline 
\hline 
{\footnotesize{}$c_{1}\boxplus c_{2}$} & {\footnotesize{}$0$} & {\footnotesize{}$1$} & {\footnotesize{}$\nicefrac{5}{3}$} & {\footnotesize{}$\nicefrac{8}{3}$} & {\footnotesize{}$\nicefrac{5}{2}$} & {\footnotesize{}$\nicefrac{7}{2}$} & {\footnotesize{}$\nicefrac{10}{3}$} & {\footnotesize{}$\nicefrac{13}{3}$}\tabularnewline
\hline 
{\footnotesize{}$\age$} & {\footnotesize{}$0$} & {\footnotesize{}$0$} & {\footnotesize{}$\nicefrac{1}{3}$} & {\footnotesize{}$\nicefrac{1}{3}$} & {\footnotesize{}$\nicefrac{1}{2}$} & {\footnotesize{}$\nicefrac{1}{2}$} & {\footnotesize{}$\nicefrac{2}{3}$} & {\footnotesize{}$\nicefrac{2}{3}$}\tabularnewline
\hline 
{\footnotesize{}$\age_{c_{1}\boxplus c_{2}}$} & {\footnotesize{}$0$} & {\footnotesize{}$1$} & {\footnotesize{}$2$} & {\footnotesize{}$3$} & {\footnotesize{}$3$} & {\footnotesize{}$4$} & {\footnotesize{}$4$} & {\footnotesize{}$5$}\tabularnewline
\hline 
\end{tabular}{\footnotesize{}\bigskip{}
}{\footnotesize\par}
\par\end{centering}
\caption{Sectors and their invariants of $\protect\cP(2,3)$, $\protect\B\mu_{2,F}$
and $\protect\cP(2,3)\times\protect\B\mu_{2,F}$.}
\label{table}
\end{table}
\end{example}

\section{Compatibility with coarse moduli spaces\label{sec:compare-coarse}}

We consider a Fano stack $\cX$ with the trivial raising function
$c\equiv0$. Let $X$ be the coarse moduli space of $\cX$. The function
$H_{\cL,c}$ is identical to the stable height function $H_{\cL}$
considered in Section \ref{sec:Line-bundles}. In particular, it factors
as 
\[
\cX\langle F\rangle\to X(F)\xrightarrow{H_{L}}\RR,
\]
where $L$ is the $\QQ$-line bundle on $X$ corresponding to $\cL$.
We suppose that $c$ is adequate. This means that $\age(\cY)\ge1$
for every twisted sector $\cY$. 
\begin{defn}
\label{def:reflection}Let $\cW$ be a DM stack. Let $w\in\cW(\overline{F})$
and let $\alpha\in\Aut(w)\setminus\{1\}$. We say that $\alpha$ is
a \emph{reflection }if the automorphism of $\T_{w}\cW$ induced by
$\alpha$ is a reflection, that is, the fixed point locus has codimension
one. We say that $\cW$ has no reflection if there is no such pair
$(w,\alpha)$. 
\end{defn}

\begin{lem}
The stack $\cX$ has no reflection.
\end{lem}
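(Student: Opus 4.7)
The plan is to argue by contradiction: a reflection would produce a twisted sector with age strictly less than $1$, which violates the adequacy hypothesis that $\age(\cY)\ge 1$ for every twisted sector $\cY$ (this is what adequacy reduces to, since $c\equiv0$).

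Suppose for contradiction that there exist $w\in\cX(\overline{F})$ and a reflection $\alpha\in\Aut(w)\setminus\{1\}$. Since $\cX$ is DM, $\Aut(w)$ is a finite group, so $\alpha$ has some finite order $l>1$. Fixing a primitive $l$-th root of unity $\zeta_{l}\in\overline{F}$, I would define a homomorphism $\iota\colon\mu_{l,\overline{F}}\hookrightarrow\Aut(w)$ sending $\zeta_{l}\mapsto\alpha$; this is injective because $\alpha$ has order exactly $l$. Via Lemma \ref{lem:KXG}, the pair $(w,\iota)$ corresponds to a morphism $\B\mu_{l,\overline{F}}\to\cX_{\overline{F}}$; it is representable precisely because $\iota$ is injective. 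This data therefore defines an $\overline{F}$-point $\widetilde{w}\in(\cJ_{0}\cX)(\overline{F})$. Since $\alpha\ne 1$, $\widetilde{w}$ does not lie in the non-twisted sector $\cX\subset\cJ_{0}\cX$; hence it belongs to some twisted sector $\cY\in\pi_{0}^{*}(\cJ_{0}\cX)$.

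Next I would compute the age of $\cY$ using $\widetilde{w}$, which is allowed by Lemma \ref{lem:age-sector}. The reflection hypothesis says that the $\alpha$-fixed subspace of $\T_{w}\cX$ has codimension one; equivalently, decomposing $\T_{w}\cX\cong\bigoplus_{i=1}^{d}\tau^{a_{i}}$ with $a_{i}\in\{0,1,\dots,l-1\}$, exactly $d-1$ of the exponents $a_{i}$ equal $0$ and the remaining exponent, say $a_{1}=a$, satisfies $\zeta_{l}^{a}\ne 1$, i.e.\ $a\in\{1,\dots,l-1\}$. By definition,
\[
\age(\cY)=\age(\widetilde{w})=\frac{1}{l}\sum_{i=1}^{d}a_{i}=\frac{a}{l}<1,
\]
directly contradicting the adequacy condition $\age(\cY)\ge 1$.

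I do not expect any real obstacle here; the only points that deserve care are (i) verifying that $\iota$ is injective so the associated morphism $\B\mu_{l,\overline{F}}\to\cX_{\overline{F}}$ is genuinely representable and lands in the stack of twisted $0$-jets, and (ii) confirming that $\widetilde{w}$ lies in a \emph{twisted} sector rather than the non-twisted one — both of which are immediate from $\alpha\ne 1$ having order exactly $l>1$. The computation of the age uses nothing beyond the definition and the reflection hypothesis, and well-definedness of $\age(\cY)$ on the sector (independently of the representative $\widetilde{w}$ or even of the choice of $\zeta_{l}$) is Lemma \ref{lem:age-sector}.
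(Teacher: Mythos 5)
Your proof is correct and takes essentially the same approach as the paper: a reflection $\alpha$ at a geometric point $w$ has exactly one nontrivial eigenvalue on $\T_w\cX$, hence the associated twisted sector has age $a/l < 1$, contradicting the adequacy hypothesis $\age(\cY)\ge 1$. You spell out more explicitly than the paper how $(w,\alpha)$ gives rise to a point of a twisted sector (via Lemma \ref{lem:KXG} and the representability of $\B\mu_l\to\cX$), while the paper condenses this and simply reads off the age from the diagonal form $\diag(\zeta,1,\dots,1)$; the underlying argument is identical.
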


\begin{proof}
If $x\in\cX(\overline{F})$ and if $\alpha\in\Aut(x)\setminus\{1\}$
is a reflection, then its action on $\T_{x}\cX$ is represented by
a diagonal matrix $\diag(\zeta,1,\dots,1)$ with $\zeta$ a root of
unity for a suitable basis. This shows that the age of $\alpha$ is
less than 1, which contradicts our assumption of adequacy.
\end{proof}
This lemma shows that the morphism $\cX\to X$ is an isomorphism in
codimension one. In particular, the canonical line bundle $\omega_{\cX}$
of $\cX$ corresponds to the canonical $\QQ$-line bundle of $X$
by the isomorphism $\Pic(\cX)_{\QQ}\cong\Pic(X)_{\QQ}$ (see Section
\ref{sec:Line-bundles}); we denote the canonical $\QQ$-line bundle
of $X$ by $\omega_{X}'$. Let $\cU\subset\cX$ and $U\subset X$
be the largest open substacks such that $\cX\to X$ restricts to an
isomorphism $\cU\to U$. The height function $H_{\omega_{\cX}^{-1}}|_{\cU\langle F\rangle}$
and the height function $H_{(\omega_{X}')^{-1}}|_{U(F)}$ are then
identical through the identification $\cU\langle F\rangle=U(F)$.
We check that the asymptotic formula for $H_{\omega_{\cX}^{-1}}|_{\cU\langle F\rangle}$
expected by Conjecture \ref{conj:Fano-stack} coincides with the one
for $H_{(\omega_{X}')^{-1}}|_{U(F)}$ expected by a version of the
Manin conjecture for singular Fano varieties. 

We now briefly review the terminology on singularities appearing in
the minimal model program, which we will use. For details, we refer
the reader to \cite{kollar2013singularities}. Let $Y$ be a variety
over $F$ with the $\QQ$-Cartier canonical divisor $K_{Y}$. A \emph{divisor
over }$Y$ means a prime divisor on some resolution of $Y$. We identify
two divisors over $Y$ when they determine the same valuation on the
function field $K(Y)$. For a divisor over $Y$ say lying on a resolution
$Z$ of $Y$, the \emph{discrepancy} $a(E;Y)$ is defined to be the
multiplicity of $E$ in the relative canonical divisor $K_{Z/Y}$,
which is a rational number. We say that $Y$ \emph{has only canonical
singularities }if $a(E;Y)\ge0$ for every divisor $E$ over $Y$.
A divisor $E$ over $Y$ is said to be \emph{crepant }if it has zero
discrepancy. When $Y$ has canonical singularities, there are at most
finitely many crepant divisors over $Y$ up to identification explained
above. We denote by $\gamma(Y)$ the number of crepant divisors over
$Y$. 
\begin{rem}
\noindent A divisor over $Y$ may not be geometrically irreducible.
By the base change to $\overline{F}$, a crepant divisor over $Y$
may split into several distinct crepant divisors over $Y_{\overline{F}}$.
Thus, we may have the strict inequality $\gamma(Y_{\overline{F}})>\gamma(Y)$. 
\end{rem}

\begin{lem}
The coarse moduli space $X$ has only canonical singularities.
\end{lem}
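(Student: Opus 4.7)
The plan is to reduce to the local quotient-singularity picture and invoke the Reid--Tai criterion. Since $\cX$ is smooth and DM, étale-locally around any geometric point $x \in \cX(\overline{F})$ with stabilizer $G_x := \Aut(x)$, the stack $\cX$ is isomorphic to a quotient $[V/G_x]$ for some smooth affine $V$ on which $G_x$ acts linearly (via the representation on $\T_x \cX$), and the coarse moduli space $X$ is étale-locally the affine quotient $V/G_x$. By the previous lemma, $\cX$ has no reflections, so the $G_x$-action on $\T_x \cX$ contains no pseudo-reflections; thus each such local chart of $X$ is a quotient of a smooth variety by a finite group acting without pseudo-reflections.

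The Reid--Tai criterion (see \cite{kollar2013singularities}) states that for a finite subgroup $G \subset \GL_d(\overline{F})$ acting without pseudo-reflections, the quotient $\AA^d/G$ has canonical singularities if and only if $\age(g) \ge 1$ for every non-identity element $g \in G$, where the age is computed from the eigenvalues of $g$ on $\AA^d$ exactly as in our definition. Applying this to each stabilizer $G_x$, the coarse space $X$ has canonical singularities if and only if $\age(g) \ge 1$ for every $x \in \cX(\overline{F})$ and every $1 \ne g \in G_x$.

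The final step is to match these local ages with ages of twisted sectors of $\cX$. A non-identity element $g \in G_x$ of order $l$ gives an injection $\mu_l \hookrightarrow G_x = \Aut(x)$, hence a geometric point $\widetilde{x} \in (\cJ_0 \cX)(\overline{F})$, and by construction $\age(g) = \age(\widetilde{x})$, where the latter is the age defined earlier via the $\mu_l$-representation on $\T_x \cX$. This point $\widetilde{x}$ lies in a twisted sector (since $g \ne 1$), and by Lemma \ref{lem:age-sector} the value $\age(\widetilde{x})$ equals the age of that sector. Conversely every twisted sector is hit by such a pair $(x,g)$. The adequacy assumption (with $c \equiv 0$) states precisely that $\age(\cY) \ge 1$ for every twisted sector $\cY$, so the Reid--Tai inequality is satisfied at every geometric point, and hence $X$ has only canonical singularities.

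The main technical point to be careful about is the first step: justifying that $\cX$ is étale-locally a linear quotient stack of the form $[V/G_x]$ and that the coarse moduli space morphism is étale-locally the quotient $V \to V/G_x$; this is standard for smooth separated DM stacks (a consequence of the Keel--Mori theorem together with the local structure theory of DM stacks), but must be invoked cleanly so that the computation of ages on $\T_x \cX$ agrees with the eigenvalue computation underlying Reid--Tai.
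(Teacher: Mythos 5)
Your proof is correct and takes essentially the same route as the paper: the paper's one-line argument invokes exactly the Reid--Shepherd-Barron--Tai criterion together with the adequacy hypothesis $\age(\cY)\ge 1$, implicitly relying on the no-reflection lemma immediately preceding to put the quotient in the form the criterion addresses. You have simply spelled out the supporting steps (local structure as $[V/G_x]$, matching stabilizer-element ages with twisted-sector ages) that the paper leaves implicit.
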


\begin{proof}
This follows from the Reid--Shepherd-Barron--Tai criterion \cite[ Th.\ 3.21]{kollar2013singularities}
and the adequacy condition that $\age(\cY)\ge1$.
\end{proof}
\begin{prop}[{cf.~\cite[Prop.\ 4.5]{yasuda2014densities}, \cite[Prop.\ 8.5]{yasuda2015maninsconjecture}}]
\label{prop:crep-jun}We have $\gamma(X)=j_{c}(\cX)$.
\end{prop}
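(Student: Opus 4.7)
The plan is to recognize the identity as a form of the McKay correspondence, relating junior (age one) conjugacy classes in local stabilizers to crepant divisors over the quotient. Since $c \equiv 0$, $\age_c = \age$ and $j_c(\cX)$ counts twisted sectors with $\age(\cY) = 1$. I would first reduce both sides to a count of $\Gamma_F$-orbits of geometric objects: on the arithmetic side, $\gamma(X)$ equals the number of $\Gamma_F$-orbits of crepant divisors over $X_{\overline{F}}$; on the stack side, by Corollary \ref{cor:bij-Pi-0} together with the fact that $\age$ is invariant under the action of $\Gamma_F$ on $\pi_0^*(\cJ_0\cX_{\overline{F}})$, $j_c(\cX)$ equals the number of $\Gamma_F$-orbits of age-one twisted sectors of $\cJ_0\cX_{\overline{F}}$. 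It therefore suffices to produce a canonical $\Gamma_F$-equivariant bijection
\[
\{\text{crepant divisors over } X_{\overline{F}}\} \longleftrightarrow \{\cY \in \pi_0^*(\cJ_0\cX_{\overline{F}}) \mid \age(\cY) = 1\}.
\]

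To construct this bijection I would argue étale-locally. At a geometric point $x \in \cX(\overline{F})$ with stabilizer $G_x$, $\cX_{\overline{F}}$ has an étale neighborhood of the form $[U/G_x]$ with $U$ smooth, and $X_{\overline{F}}$ has the corresponding chart $U/G_x$. By Remark \ref{rem:tw-form-IX} the geometric twisted sectors meeting this chart correspond to conjugacy classes of non-identity elements $g \in G_x$, and the age of such a sector equals the classical age $\tfrac{1}{n}\sum a_i$ of $g$ acting on $T_xU$ via eigenvalues $\zeta^{a_1},\dots,\zeta^{a_d}$. Since $\cX$ has no reflection (by the lemma preceding the proposition), $G_x$ contains no pseudo-reflections, so the local McKay correspondence furnishes a canonical bijection between crepant divisors over $U/G_x$ and $G_x$-conjugacy classes of junior elements; this is exactly the geometric statement underlying \cite[Prop.\ 4.5]{yasuda2014densities} and \cite[Prop.\ 8.5]{yasuda2015maninsconjecture}, and in the adequate case no element has age less than one, so every non-trivial conjugacy class is either junior or of age strictly greater than one.

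The main obstacle is globalization: I must verify that the local bijections assemble into a well-defined global correspondence. Concretely, I would check that (i) each crepant divisor over $X_{\overline{F}}$ has a unique center on $X_{\overline{F}}$ and is determined by its valuation on any étale chart meeting that center, so the assignment of a junior conjugacy class is independent of the chart; and (ii) the identification $\cJ_0\cX_{\overline{F}} \cong \I\cX_{\overline{F}}$ of Remark \ref{rem:tw-form-IX} is compatible with representable étale base change, so the local decompositions into conjugacy classes glue to the global decomposition $\pi_0^*(\cJ_0\cX_{\overline{F}})$. These compatibilities are essentially formal from the construction in \cite{yasuda2006motivic}, but they are the substantive content of the argument. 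Granting them, the resulting bijection is manifestly $\Gamma_F$-equivariant because both sides transform naturally under Galois, and taking $\Gamma_F$-invariants yields $\gamma(X) = j_c(\cX)$.
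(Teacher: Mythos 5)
Your proposal takes a genuinely different route from the paper's, though both organize the argument around the same reduction: produce a $\Gamma_F$-equivariant bijection between geometric crepant divisors over $X_{\overline{F}}$ and geometric junior sectors of $\cX_{\overline{F}}$, then pass to $\Gamma_F$-orbits. The paper constructs this bijection \emph{globally}, by first invoking the stringy/motivic McKay identity $\Sigma(X)_{W}=\Sigma(\cX)_{\cW}$ from \cite[Th.\ 3]{yasuda2006motivic}, extracting equality of numbers by comparing leading coefficients of a cohomological realization, and then upgrading to a canonical bijection via the global correspondence between arcs on a resolution of $X$ and twisted arcs of $\cX$. You instead construct the bijection \emph{étale-locally}, using presentations $[U/G_x]$ together with the classical Ito--Reid/Batyrev local McKay correspondence (junior conjugacy classes without pseudo-reflections $\leftrightarrow$ crepant divisors over $U/G_x$), and then propose to glue these local bijections and observe Galois-equivariance. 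What the paper's route buys is a manifestly global, canonical bijection with no separate gluing verification — the hard work being outsourced to a quotable theorem. What your route buys is a more self-contained argument that only uses the finite-group McKay correspondence and standard facts about the inertia stack; but the gluing compatibilities (i)--(ii) that you flag as the ``main obstacle'' are precisely the substantive content of the twisted-arcs correspondence in \cite{yasuda2006motivic}, so you have not really avoided that dependency — you have only reorganized where it enters. One small citation slip: the fact that $\pi_0(\cJ_0\cX)$ is the set of $\Gamma_F$-orbits in $\pi_0(\cJ_0\cX_{\overline{F}})$ is used inside the proof of Corollary \ref{cor:bij-Pi-0} rather than being its statement, so it should be cited as such rather than via the corollary itself.
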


\begin{proof}
We only sketch the proof. Let $W\subset X$ be the singular locus
and let $\cW\subset\cX$ be its preimage, which is nothing but the
locus of stacky points. From \cite[Th.\ 3]{yasuda2006motivic}, we
have $\Sigma(X)_{W}=\Sigma(\cX)_{\cW}$ (these invariants are also
denoted by $\M_{\st}(X)_{W}$ and $\M_{\st}(\cX)_{\cW}$ in the literature).
The left hand side has an expression in terms of a log resolution
and exceptional divisors, while the right hand side has an expression
in terms of sectors. Taking the E-polynomial or Poincaré polynomial
realization of this equality and comparing the coefficients of the
leading terms, we see that the number of geometric crepant divisors
(that is, crepant divisors over $X_{\overline{F}}$) is equal to the
number of geometric junior sectors (that is, junior sectors of $\cX_{\overline{F}}$).
In fact, not only the numbers are equal, but also there is a natural
one-to-one correspondence:
\begin{equation}
\{\text{geometric crepant divisors}\}\leftrightarrow\{\text{geometric junior sectors}\}.\label{eq:corr}
\end{equation}
This follows from the correspondence between arcs on a resolution
of $X$ and twisted arcs of $\cX$ (see \cite{yasuda2006motivic}).
Correspondence \ref{eq:corr} is $\Gamma_{F}$-equivariant. In particular,
both sides have the same number of $\Gamma_{F}$-orbits. This implies
the proposition.
\end{proof}
Let us recall the following version of the Manin conjecture for Fano
varieties with canonical singularities:
\begin{conjecture}[{\cite[Conj.\ 2.3]{yasuda2014densities}, \cite[Conj.\ 5.6]{yasuda2015maninsconjecture}}]
\label{conj:singular-Fano}Let $W$ be a Fano variety over $F$ with
only canonical singularities, that is, a normal projective variety
with only canonical singularities such that the anti-canonical $\QQ$-line
bundle $(\omega_{W}')^{-1}$ is ample. Suppose that $W$ has Zariski
dense $F$-points. Then, there exists a thin subset $T\subset W\langle F\rangle$
such that 
\[
\#\{x\in W\langle F\rangle\setminus T\mid H_{(\omega_{W}')^{-1}}(x)\le B\}\sim CB(\log B)^{\rho(W)+\gamma(W)-1}.
\]
\end{conjecture}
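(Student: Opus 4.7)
The plan is to reduce Conjecture \ref{conj:singular-Fano} to the classical Batyrev--Manin conjecture for smooth Fano varieties by passing to a crepant resolution. Since $W$ is $\QQ$-factorial with canonical singularities, the minimal model program provides a $\QQ$-factorial terminalization $\pi \colon \tilde{W} \to W$ with $\omega_{\tilde{W}} = \pi^{*}\omega_{W}'$, and in favorable cases (e.g.\ small dimension, or klt pairs admitting resolutions) one can further obtain a genuine smooth crepant resolution. Assuming such a smooth $\tilde W$, the anticanonical class $(\omega_{\tilde{W}})^{-1}$ is big and nef (i.e.\ $\tilde W$ is weak Fano) and equals $\pi^{*}(\omega_{W}')^{-1}$, so the height functions agree modulo bounded functions: $H_{(\omega_{\tilde{W}})^{-1}} = H_{(\omega_{W}')^{-1}} \circ \pi$.

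Next, I would compare point counts. The morphism $\pi$ is an isomorphism above the smooth locus of $W$, and the images of its exceptional divisors form a proper closed subset of $W$; outside this locus, $\pi$ induces a bijection on $F$-points, and the set of points lying over it is thin. To match exponents, one uses the well-known correspondence between crepant divisors over $W$ and the $\pi$-exceptional prime divisors of $\tilde{W}$: each such exceptional divisor contributes an independent class to $\N^{1}(\tilde W)_{\RR}$, giving
\[
\rho(\tilde{W}) = \rho(W) + \gamma(W).
\]
Applying the Batyrev--Manin conjecture to the weak Fano $\tilde{W}$ (after removing a suitable accumulating thin subset $T'$) yields an asymptotic
\[
\#\{y \in \tilde{W}(F) \setminus T' \mid H_{(\omega_{\tilde{W}})^{-1}}(y) \le B\} \sim C B (\log B)^{\rho(\tilde W) - 1},
\]
which transfers to $W$ under $\pi$ with the correct exponent $\rho(W) + \gamma(W) - 1$, after enlarging the thin set to include $\pi(\mathrm{Exc}(\pi))(F)$.

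The main obstacles are twofold. First, a smooth crepant resolution of a canonical Fano need not exist in general; one typically only obtains a terminalization, and the residual terminal singularities may still carry crepant divisors that are not realized as exceptional divisors of a smooth model. Handling this requires either inductive reduction along a chain of partial resolutions, or an appeal to the stack-theoretic McKay correspondence of \cite{yasuda2006motivic} used in Proposition \ref{prop:crep-jun}, matching the terminal contributions against twisted sectors of a Deligne--Mumford resolution $\cX \to W$. Second, and more seriously, the Batyrev--Manin conjecture for smooth (weak) Fano varieties is itself open in general, so Conjecture \ref{conj:singular-Fano} is not strictly easier than the classical case; the role of the above plan is really to show that the singular and smooth versions are equivalent up to the bookkeeping of crepant divisors, which is exactly what Proposition \ref{prop:crep-jun} encodes on the stack side.
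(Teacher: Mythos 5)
This statement is a \emph{conjecture}, not a theorem: the paper states it as Conjecture \ref{conj:singular-Fano}, cites it to \cite{yasuda2014densities} and \cite{yasuda2015maninsconjecture}, and offers no proof, since it is an open generalization of the Manin conjecture. There is therefore no proof in the paper to compare your attempt against.

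Your write-up is actually fairly candid about this: you correctly observe that a smooth crepant resolution need not exist (at best one has a $\QQ$-factorial terminalization, whose terminal singularities can still admit nontrivial crepant divisors), and you correctly observe that even in favorable cases the reduction only lands on the smooth Batyrev--Manin conjecture, which is itself open. So what you have is not a proof but a heuristic equivalence, and your final paragraph essentially concedes as much. The relationship you sketch---that $\rho(\tilde{W}) = \rho(W) + \gamma(W)$ for a crepant resolution, and that the exponent $\rho(W)+\gamma(W)-1$ is bookkeeping for crepant divisors---is exactly the heuristic motivation given in \cite{yasuda2014densities,yasuda2015maninsconjecture}, and it is also the point of Proposition \ref{prop:crep-jun} in the present paper (which matches crepant divisors over $\overline{\cX}$ with $c$-junior twisted sectors of $\cX$). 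What the present paper does with Conjecture \ref{conj:singular-Fano} is \emph{not} prove it, but show that it is equivalent (after aligning thin sets) to the stacky Conjecture \ref{conj:Fano-stack} with trivial raising function, via the coarse moduli map and Proposition \ref{prop:crep-jun}. If your intent was to ``prove'' the conjecture, that is not possible here; if your intent was to explain the exponent and relate it to the smooth case, you have done so correctly but should label it as motivation rather than proof.
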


We have the following equivalence of conjectures:
\begin{prop}
Let $\cZ\subset\cX$ be the locus of stacky points given with the
structure of a reduced closed substack and let $Z\subset X$ be the
singular locus. Then, the following conditions are equivalent:
\begin{enumerate}
\item Conjecture \ref{conj:Fano-stack} holds for the pair $(\cX,c)$ with
a thin subset $T$ containing $\cZ\langle F\rangle$.
\item Conjecture \ref{conj:singular-Fano} holds for $X$ with a thin subset
$T$ containing $Z(F)$. 
\end{enumerate}
\end{prop}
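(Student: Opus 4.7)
The plan is to transfer the counting problem between $\cX\langle F\rangle$ and $X(F)$ through the coarse moduli morphism $\pi$, exploiting three facts already assembled in the text: (i) since $c\equiv 0$ is adequate, $\cX\to X$ is an isomorphism in codimension one (by the lemma just established), so $\omega_{\cX}^{-1}$ and $(\omega_X')^{-1}$ correspond under $\Pic(\cX)_{\QQ}\cong\Pic(X)_{\QQ}$; (ii) the induced height functions agree on the common open locus, i.e.\ $H_{\omega_{\cX}^{-1},c}(x)=H_{(\omega_X')^{-1}}(\pi(x))$ for every $x\in\cU\langle F\rangle=U(F)$; (iii) by Proposition \ref{prop:iso-Pic} and Proposition \ref{prop:crep-jun}, $\rho(\cX)=\rho(X)$ and $j_c(\cX)=\gamma(X)$, so the two predicted exponents match. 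The strategy is to show that a thin subset on one side can be transferred to a thin subset on the other that agrees with the first under the bijection $\pi|_{\cU}\colon\cU\langle F\rangle\isoto U(F)$.

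Assume (1) with a thin subset $T\subset\cX\langle F\rangle$ containing $\cZ\langle F\rangle$. Set
\[
T':=Z(F)\cup\pi\bigl(T\cap\cU\langle F\rangle\bigr).
\]
The inclusion $\cZ\langle F\rangle\subset T$ forces $\cX\langle F\rangle\setminus T\subset\cU\langle F\rangle$, and since $\pi|_{\cU}$ is a bijection, $\pi$ restricts to a height-preserving bijection $\cX\langle F\rangle\setminus T\isoto X(F)\setminus T'$. Thinness of $T'$ rests on three observations: the closed immersion $Z\hookrightarrow X$ of strictly lower dimension is representable, generically finite onto its image, and non-birational, hence thin in the sense of Definition \ref{def:thin-mor}; any family of thin morphisms $\cY_i\to\cX$ covering $T$ restricts, via base change to $\cU$ and the isomorphism $\cU\isoto U$, to thin morphisms into $U$ whose compositions with the open immersion $U\hookrightarrow X$ are still thin and cover $\pi(T\cap\cU\langle F\rangle)$; and a finite union of thin subsets remains thin. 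This yields (2), with the same asymptotic constant $C$ and exponent $\rho(X)+\gamma(X)-1=\rho(\cX)+j_c(\cX)-1$.

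For the converse, assume (2) with a thin set $T'\supset Z(F)$. Define
\[
T:=\cZ\langle F\rangle\cup(\pi|_{\cU})^{-1}\bigl(T'\setminus Z(F)\bigr).
\]
The closed immersion $\cZ\hookrightarrow\cX$ is representable (closed immersions of stacks always are), of strictly lower dimension, hence thin; and thin morphisms $V_i\to X$ covering $T'$ pull back to thin morphisms $V_i\times_XU\to U\cong\cU$ which, composed with $\cU\hookrightarrow\cX$, give thin morphisms into $\cX$ whose images together cover $(\pi|_{\cU})^{-1}(T'\setminus Z(F))$. Thus $T$ is thin, contains $\cZ\langle F\rangle$, and the bijection $\pi|_{\cU}$ identifies $\cX\langle F\rangle\setminus T$ with $X(F)\setminus T'$ in a height-preserving way, delivering the asymptotic predicted by Conjecture \ref{conj:Fano-stack}. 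Zariski density transfers in both directions because $\cU\cong U$ is an open dense subscheme and the subsets removed are contained in a proper closed substack.

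The only real subtlety is verifying thinness of the transferred sets---in particular that a closed immersion of strictly lower dimension qualifies as thin (which amounts to unwinding Definition \ref{def:thin-mor}), and that composition with the open immersion $\cU\hookrightarrow\cX$ does not destroy non-birationality of a thin morphism $V\to\cU$ (it does not, because a thin morphism is either non-dominant or has generic degree $>1$, both properties preserved under composition with an open immersion). These checks are essentially formal, but are precisely where the hypothesis that $T\supset\cZ\langle F\rangle$ (respectively $T'\supset Z(F)$) is used: without it, the non-injectivity of $\pi$ along the stacky locus would prevent a direct comparison of the two counting functions.
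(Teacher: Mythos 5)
Your proof is correct and follows essentially the same approach as the paper's: identify $(\cX\setminus\cZ)\langle F\rangle$ with $(X\setminus Z)(F)$, observe that the anti-canonical heights agree there and that $\rho(\cX)=\rho(X)$, $j_c(\cX)=\gamma(X)$, and transfer the thin set by splitting off $\cZ\langle F\rangle$ (resp.\ $Z(F)$). The paper treats the thinness transfer as immediate, whereas you spell out that closed immersions of strictly smaller dimension are thin and that thin morphisms restrict and compose well with the open immersions $\cU\hookrightarrow\cX$ and $U\hookrightarrow X$; this is additional detail rather than a different argument.
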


\begin{proof}
We first note that $\cZ$ and $Z$ are of codimension $\ge2$ in $\cX$
and $X$, respectively and that $Z$ is the image of $\cZ$. If we
identify $(\cX\setminus\cZ)\langle F\rangle$ and $(X\setminus Z)(F)$,
then the height functions $H_{\omega_{\cX}^{-1},c}|_{(\cX\setminus\cZ)\langle F\rangle}$
and $H_{(\omega_{X}')^{-1}}|_{(X\setminus Z)(F)}$ are the same. If
$T=\cZ\langle F\rangle\sqcup T'$ is a thin subset of $\cX\langle F\rangle$
containing $\cZ\langle F\rangle$, then $T'$ is also a thin subset
of $X(F)$ through the identification $(\cX\setminus\cZ)\langle F\rangle=(X\setminus Z)(F)$.
We also have that $\rho(\cX)=\rho(X)$ and $j_{c}(\cX)=\gamma(X)$.
It follows that if Conjecture \ref{conj:Fano-stack} holds for the
pair $(\cX,c)$ with a thin $T=\cZ\langle F\rangle\sqcup T'$, then
Conjecture \ref{conj:singular-Fano} holds for $X$ with the thin
set $Z(F)\sqcup T'$. Similarly for the opposite implication. 
\end{proof}
\begin{rem}
\label{rem:log-terminal}If a nice stack $\cX$ has trivial generic
stabilizer and has no reflection, and if some twisted sector of it
has age less than one, then singularities of the coarse moduli space
$X$ is not canonical but only log terminal. Observation in \cite[Section 13.3]{yasuda2015maninsconjecture}
suggests that it is more difficult to control the asymptotic of heights
of rational points on such a variety as well as to formulate a Batyrev--Manin-type
conjecture for stacks which is compatible with one for varieties.
This is a reason why we impose the condition of adequacy. 
\end{rem}

\begin{example}
Let $\cX:=\cP(1,1,2)$. This stack has only one twisted sector, which
has age 1. Its coarse moduli space $X=\PP(1,1,2)$, the weighted projective
space, is a toric surface with a unique singular point. Applying a
result in \cite{batyrev1996heightzeta} (see also \cite[Example 5.1.1]{batyrev1998tamagawa})
to the minimal resolution of $X$, we get an asymptotic formula of
the form $\#\{\cdots\}\sim CB(\log B)$ for the height function of
the anti-canonical sheaf $\omega_{X}^{-1}$, which is a genuine line
bundle (rather than a $\QQ$-line bundle), since $X$ is Gorenstein.
It follows that Conjecture \ref{conj:Fano-stack} holds for the stack
$\cX$ with the trivial raising function $c\equiv0$; the exponent
of the log factor in this case is
\[
\rho(\cX)+j_{c}(\cX)-1=1+1-1=1.
\]
Note that the $c$-junior sector of $\cX$ is of dimension zero, while
ones in Example \ref{exa:product} have the same dimension as the product
stack $\cX_{1}\times\cX_{2}$ in question. Combining examples of this
kind with the product construction in Example \ref{exa:product} provides
more cases where Conjecture \ref{conj:Fano-stack} holds.
\end{example}

\section{The orbifold pseudo-effective cone\label{sec:The-orbifold-pseudo-effective}}

Our next goal is to formulate the Batyrev--Manin conjecture for DM
stacks which treats non-Fano stacks and line bundles different from
the anti-canonical line bundle. Recall that the original Batyrev--Manin
conjecture was formulated in terms of the pseudo-effective cone. In
this section, we introduce an ``orbifold version'' of the pseudo-effective
cone. Let $\cX$ be a nice stack over a number field $F$ as before. 
\begin{defn}
\label{def:orb-NS}We define the \emph{orbifold Néron--Severi space
to be}
\[
\N_{\orb}^{1}(\cX)_{\RR}:=\N^{1}(\cX)_{\RR}\oplus\bigoplus_{\cY\in\pi_{0}^{*}(\cJ_{0}\cX)}\RR[\cY].
\]
\end{defn}

This is a finite-dimensional $\RR$-vector space. Since $\cX\ne\Spec F$
from the definition of nice stack, this space has positive dimension;
if $\dim\cX>0$, then $\N^{1}(\cX)_{\RR}\ne0$, and if $\dim\cX=0$,
then $\pi_{0}^{*}(\cJ_{0}\cX)\ne\emptyset$. Each element $\theta\in\N_{\orb}^{1}(\cX)_{\RR}$
is uniquely written as
\[
\theta=\theta_{0}+\sum_{\cY}\theta_{\cY}[\cY]\quad(\theta_{0}\in\N^{1}(\cX)_{\RR},\,\theta_{\cY}\in\RR).
\]

\begin{rem}
It is worth noting that the orbifold Néron--Severi space $\N_{\orb}^{1}(\cX)_{\RR}$
can be regarded as a subspace of the Chen--Ruan orbifold cohomology
$\bigoplus_{i\in\QQ}\H_{\mathrm{CR}}^{i}(\cX(\CC),\RR)$ (for this
notion, see \cite{adem2007orbifolds}), like the usual Néron--Severi
space $\N_{\orb}^{1}(X)_{\RR}$ of a smooth variety $X$ can be regarded
as a subspace of $\H^{2}(X(\CC),\RR)$. 
\end{rem}

\begin{defn}
For a raised line bundle $(\cL,c)$, we define its \emph{numerical
class }to be
\[
[\cL,c]:=[\cL]+\sum_{\cY}c(\cY)[\cY]\in\N_{\orb}^{1}(\cX)_{\RR}.
\]
\end{defn}

For a $\QQ$-factorial projective variety $X$, the pseudo-effective
cone $\PEff(X)\subset\N^{1}(X)_{\RR}$ is by definition the closure
of the cone generated by classes of effective divisors. By \cite{boucksom2013thepseudoeffective},
this cone is dual to the cone of moving curves.\footnote{The paper \cite{boucksom2013thepseudoeffective} proves this for smooth
varieties, and the singular case is easily reduced to the smooth case
by desingularization.}The pseudo-effective cone plays an essential role in formulation of
the Batyrev--Manin conjecture. This would be partly because moving
curves are geometric analogue of ``general'' rational points. We
consider the following generalization of moving curves to stacks:
\begin{defn}
A \emph{stacky curve} on $\cX_{\overline{F}}$ is a representable
$\overline{F}$-morphism $\cC\to\cX_{\overline{F}}$, where $\cC$
is a one-dimensional, irreducible, proper, and smooth DM stack over
$\overline{F}$ which has trivial generic stabilizer. A \emph{covering
family of stacky curves} of $\cX_{\overline{F}}$ is a pair 
\[
(\pi\colon\widetilde{\cC}\to T,\widetilde{f}\colon\widetilde{\cC}\to\cX_{\overline{F}})
\]
of $\overline{F}$-morphisms of DM stacks such that 
\begin{enumerate}
\item $\pi$ is smooth and surjective,
\item $T$ is an integral scheme of finite type over $\overline{F}$,
\item for each point $t\in T(\overline{F})$, the morphism
\[
\widetilde{f}|_{\pi^{-1}(t)}\colon\pi^{-1}(t)\to\cX_{\overline{F}}
\]
is a stacky curve on $\cX_{\overline{F}}$,
\item $\widetilde{f}$ is dominant. 
\end{enumerate}
We often omit $\pi$ and write a covering family of stacky curves
simply as $\widetilde{f}\colon\widetilde{\cC}\to\cX_{\overline{F}}$. 
\end{defn}

Let $f\colon\cC\to\cX_{\overline{F}}$ be a stacky curve and let $p\in\cC\langle\overline{F}\rangle$
be a stacky point. The formal neighborhood of $p$ is written as $[\Spec\overline{F}\tbrats/\mu_{l,\overline{F}}]$
for some positive integer $l$. The morphism $f$ induces a twisted
arc of $\cX_{\overline{F}}$, that is, a representable morphism 
\[
[\Spec\overline{F}\tbrats/\mu_{l,\overline{F}}]\to\cX_{\overline{F}}.
\]
In turn, this induces a twisted 0-jet
\[
\B\mu_{l,\overline{F}}\hookrightarrow[\Spec\overline{F}\tbrats/\mu_{l,\overline{F}}]\to\cX_{\overline{F}},
\]
which is an $\overline{F}$-point of $\cJ_{0}\cX$. We denote the
sector of $\cX$ (not of $\cX_{\overline{F}}$) containing this point
by $\cY_{p}$. Note that this construction is an analogue of construction
of the residue map $\psi_{v}$ (see Definition \ref{def:residue-map}).
As an analogue of our unstable height function (Definition \ref{def:height}),
we define:

\begin{defn}
Let $\theta=\theta_{0}+\sum_{\cY}\theta_{\cY}[\cY]$ be an element
of $\N_{\orb}^{1}(\cX)_{\RR}$ and let $f\colon\cC\to\cX_{\overline{F}}$
be a stacky curve. The \emph{intersection number} $(f,\theta)=(\cC,\theta)$
is defined by
\[
(\cC,\theta):=\deg f^{*}\theta_{0}+\sum_{p\in\cC\langle\overline{F}\rangle:\text{stacky point}}\theta_{\cY_{p}}.
\]
\end{defn}

\begin{lem}
Let $(\pi\colon\widetilde{\cC}\to T,\widetilde{f}\colon\widetilde{\cC}\to\cX_{\overline{F}})$
be a covering family of stacky curves and let $\theta\in\N_{\orb}^{1}(\cX)$.
Then, there exists an open dense subscheme $T_{0}\subset T$ such
that for all $t\in T_{0}(\overline{F})$, the intersection numbers
$(\widetilde{f}|_{\pi^{-1}(t)},\theta)$ are the same.
\end{lem}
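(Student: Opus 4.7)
My plan is to decompose $\theta=\theta_{0}+\sum_{\cY}\theta_{\cY}[\cY]$ with $\theta_{0}\in\N^{1}(\cX)_{\RR}$ and to show that each of the two corresponding contributions to $(\widetilde{f}|_{\pi^{-1}(t)},\theta)$ is constant on a suitable open dense $T_{0}\subset T$. Intersecting the two open sets produces the desired $T_{0}$.

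For the line-bundle contribution, by $\RR$-linearity it suffices to treat $\theta_{0}=[\cL]$ for an actual line bundle $\cL$ on $\cX_{\overline{F}}$. After shrinking $T$ (by a relative compactification together with generic flatness) I may assume $\pi\colon\widetilde{\cC}\to T$ is flat and proper, so the fiberwise degree $t\mapsto\deg(\widetilde{f}^{*}\cL)|_{\pi^{-1}(t)}$ is locally constant by the standard invariance of the Hilbert polynomial in flat proper families of one-dimensional proper DM stacks; since $T$ is integral, this degree is actually constant.

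For the sector contribution, let $\cZ\subset\widetilde{\cC}$ be the closed ``stacky locus'' where the automorphism group scheme is non-trivial. It is a DM stack of finite type over $\overline{F}$, and hence has finitely many connected components $\cZ=\bigsqcup_{i=1}^{n}\cZ^{(i)}$. The construction preceding the lemma, applied fiberwise (using the formal neighborhood of a point of $\cZ$ inside its fiber of $\pi$), defines a map $\cZ\langle\overline{F}\rangle\to\pi_{0}(\cJ_{0}\cX)$. I will check that this map is locally constant: the smoothness of $\pi$ guarantees that the formal neighborhoods $[\Spec\overline{F}\tbrats/\mu_{l,\overline{F}}]$ vary continuously with the point, so the induced twisted $0$-jets lie in a single connected component of $\cJ_{0}\cX_{\overline{F}}$ on each $\cZ^{(i)}$. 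Thus there is a well-defined sector $\cY^{(i)}\in\pi_{0}(\cJ_{0}\cX)$ with $\cY_{p}=\cY^{(i)}$ for every stacky point $p$ of $\cZ^{(i)}\langle\overline{F}\rangle$. The restriction $\pi|_{\cZ^{(i)}}\colon\cZ^{(i)}\to T$ is quasi-finite because each fiber is a finite set of stacky points of a stacky curve; in characteristic zero, after restricting to an open dense subscheme $T_{0}^{(i)}\subset T$ it is finite and étale of some constant degree $d^{(i)}$. Setting $T_{0}:=\bigcap_{i=1}^{n}T_{0}^{(i)}$, we find for every $t\in T_{0}(\overline{F})$ that
\[
\sum_{p\in(\widetilde{f}|_{\pi^{-1}(t)})\langle\overline{F}\rangle,\ p\text{ stacky}}\theta_{\cY_{p}}=\sum_{i=1}^{n}d^{(i)}\,\theta_{\cY^{(i)}},
\]
which is independent of $t$, completing the argument.

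The main obstacle is the deformation-theoretic claim that $p\mapsto\cY_{p}$ is locally constant on $\cZ$. Concretely, I need to produce, over any sufficiently small étale neighborhood of a point of $\cZ$, a morphism of the form $\B\mu_{l}\times U\to\cX_{\overline{F}}$ interpolating the twisted $0$-jets attached to nearby stacky points; this is a direct consequence of the smoothness of $\pi$ and tameness, but writing it out rigorously requires the analogue of Lemma \ref{lem:ext_tw_arc} for families and care about the canonical choice of the embedding $\mu_{l}\hookrightarrow\mathrm{Aut}$. Once this is in hand, the counting argument above is routine.
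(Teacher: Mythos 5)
Your overall strategy matches the paper's: split $\theta=\theta_{0}+\sum_{\cY}\theta_{\cY}[\cY]$, handle $\deg f^{*}\theta_{0}$ by flatness, and show that the sector $\cY_{p}$ is constant along each connected piece of the stacky locus after passing to a suitable cover of $T$. So the decomposition and the key idea are the same. What you do differently is start from the connected components $\cZ^{(i)}$ of the stacky locus inside $\widetilde{\cC}$ and then trivialize; the paper instead first passes to a dominant $T'\to T$ over which the stacky locus splits into sections $s_{1},\dots,s_{l}$ with $\Image(s_{i})\cong\B\mu_{l_{i},T'}$, and then reads off the sector. These are the same argument in a different order.

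The one place where your write-up is genuinely off is in the last paragraph, where you flag a ``deformation-theoretic'' obstacle and suggest you would need a family version of Lemma \ref{lem:ext_tw_arc} to show that $p\mapsto\cY_{p}$ is locally constant. No such ingredient is needed, and this is exactly what the paper's choice of ordering makes transparent: once the reduced stacky locus of $\widetilde{\cC}_{T'}$ is identified with $\bigsqcup_{i}\B\mu_{l_{i},T'}$ over $T'$, the composite $\B\mu_{l_{i},T'}\hookrightarrow\widetilde{\cC}_{T'}\xrightarrow{\widetilde{f}}\cX_{\overline{F}}$ is \emph{already} a $T'$-point of $\cJ_{0}\cX$ --- one simply restricts the existing morphism $\widetilde{f}$ to a closed substack; there is nothing to deform or extend. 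Since $T'$ is connected, that $T'$-point lies over a single connected component of $\cJ_{0}\cX$, which is the desired constancy. So the step you describe as ``the main obstacle'' is in fact a one-line observation. A second, minor, point: $\cZ^{(i)}\to T$ is a morphism of stacks, so speaking of it as ``finite étale of constant degree $d^{(i)}$'' is imprecise --- it is the coarse space of (the reduction of) $\cZ^{(i)}$ that becomes finite over a dense open $T_{0}^{(i)}$, and $d^{(i)}$ should be read as the number of stacky points of $\cZ^{(i)}$ in each fiber, i.e.\ the degree of that coarse-space map.
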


\begin{proof}
We let $f$ vary among the morphisms $\widetilde{f}|_{\pi^{-1}(t)}\colon\pi^{-1}(t)\to\cX_{\overline{F}}$,
$t\in T(\overline{F})$. The first term $\deg f^{*}\theta_{0}$ of
the intersection number is constant on the entire $T(\overline{F})$.
We show that the second term $\sum_{p}\theta_{\cY_{p}}$ is also constant
on some open dense subset $T_{0}\subset T$. There exists a dominant
morphism $T'\to T$ of $\overline{F}$-varieties satisfying 
\begin{enumerate}
\item the base change $\pi_{T'}\colon\widetilde{\cC}_{T'}\to T'$ of $\pi\colon\widetilde{\cC}\to T$
has sections $s_{1},\dots,s_{l}\colon T'\to\widetilde{\cC}_{T'}$
such that the union of their images, $\bigcup_{i=1}^{l}\Image(s_{i})$,
is precisely the stacky locus of $\widetilde{\cC}_{T'}$, and
\item each reduced closed substack $\Image(s_{i})\subset\widetilde{\cC}_{T'}$
is isomorphic to $\B\mu_{l_{i},T'}$ over $T'$ for some $l_{i}>0$. 
\end{enumerate}
The morphism 
\[
\B\mu_{l_{i},T'}\hookrightarrow\widetilde{\cC}_{T'}\to\cX_{\overline{F}}
\]
defines a $T'$-point of $\cJ_{0}\cX_{\overline{F}}$ and one of $\cJ_{0}\cX$,
which in turn determines a sector of $\cX$. This shows that when
an $\overline{F}$-point $p\in\widetilde{\cC}_{T'}\langle\overline{F}\rangle$
varies along $\Image(s_{i})$, then the associated sector $\cY_{p}$
is unchanged. This shows that the second term $\sum_{p}\theta_{\cY_{p}}$
of the intersection number is constant, when $t$ varies in the image
of $T'\langle\overline{F}\rangle\to T\langle\overline{F}\rangle$. 
\end{proof}
\begin{defn}
For a covering family $(\pi\colon\widetilde{\cC}\to T,\widetilde{f}\colon\widetilde{\cC}\to\cX_{\overline{F}})$
of stacky curves and $\theta\in\N_{\orb}^{1}(\cX)$, we define the
\emph{intersection number }$(\widetilde{f},\theta)$ to be $(\widetilde{f}|_{\pi^{-1}(t)},\theta)$
for a general point $t\in T(\overline{F})$. 
\end{defn}

\begin{defn}[Orbifold pseudo-effective cone]
We define the \emph{orbifold pseudo-effective cone} $\PEff_{\orb}(\cX)$
to be the cone in $\N_{\orb}^{1}(\cX)$ consisting of elements $\theta$
such that for every covering family $\widetilde{f}\colon\widetilde{\cC}\to\cX_{\overline{F}}$
of stacky curves, $(\widetilde{f},\theta)\ge0$. Elements in $\PEff_{\orb}(\cX)$
are called \emph{pseudo-effective classes. }We also define the (non-orbifold)
\emph{pseudo-effective cone $\PEff(\cX)$ }to be $\PEff(\cX):=\PEff_{\orb}(\cX)\cap\N^{1}(\cX)_{\RR}$.
\end{defn}

We see that by the identification $\N^{1}(\cX)_{\RR}=\N^{1}(\overline{\cX})_{\RR}$,
we have $\PEff(\cX)=\PEff(\overline{\cX})$. 
\begin{rem}[Intersection pairing]
\label{rem:intersetion-pairing}If $\N_{1,\orb}(\cX)_{\RR}$ denotes
the dual space of $\N_{\orb}^{1}(\cX)_{\RR}$, we may define the intersection
number in terms of the pairing 
\[
\N_{\orb}^{1}(\cX)_{\RR}\times\N_{1,\orb}(\cX)_{\RR}\to\RR.
\]
The space $\N_{1,\orb}(\cX)_{\RR}$ is expressed as 
\[
\N_{1,\orb}(\cX)_{\RR}=\N_{1}(\cX)_{\RR}\oplus\bigoplus_{\cY\in\pi_{0}^{*}(\cJ_{0}\cX)}\RR[\cY]^{*},
\]
where $\N_{1}(\cX)_{\RR}$ is the space of real 1-cycles modulo the
numerical equivalence and $\{[\cY]^{*}\}_{\cY}$ is the dual basis
of $\{[\cY]\}_{\cY}$, which is supposed to be orthogonal to $\N^{1}(\cX)_{\RR}$.
For a covering family $\widetilde{f}\colon\widetilde{\cC}\to\cX_{\overline{F}}$
of stacky curves, we can associate an element
\[
[\widetilde{f}]_{\orb}:=[\widetilde{f}|_{\pi^{-1}(t)}]+\sum_{p\in(\pi^{-1}(t))\langle\overline{F}\rangle:\text{stacky point}}[\cY_{p}]^{*}\quad([\widetilde{f}|_{\pi^{-1}(t)}]\in\N_{1}(\cX)_{\RR})
\]
so that for every $\theta\in\N_{\orb}^{1}(\cX)_{\RR}$, the intersection
number $(\widetilde{f},\theta)$ is expressed as the pairing $([\widetilde{f}]_{\orb},\theta)$.
The dual cone of $\PEff_{\orb}(\cX)$ is then the closure of the cone
generated by the classes $[\widetilde{f}]_{\orb}$ of covering families
$\widetilde{f}\colon\widetilde{\cC}\to\cX_{\overline{F}}$ of stacky
curves. 
\end{rem}

\begin{lem}
\label{lem:peff-peff}Let $\theta=\theta_{0}+\sum_{\cY}\theta_{\cY}[\cY]\in\N_{\orb}^{1}(\cX)_{\RR}$.
If $\theta_{0}$ is not pseudo-effective as an element of $\N^{1}(\overline{\cX})$,
then $\theta$ is not pseudo-effective, either.
\end{lem}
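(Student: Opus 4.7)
The plan is to produce a covering family $\widetilde{f}\colon\widetilde{\cC}\to\cX_{\overline{F}}$ of stacky curves in which the source $\widetilde{\cC}$ is fiberwise a scheme (no stacky points on $\widetilde{\cC}$), and for which $\deg f^{*}\theta_{0}<0$. Since such a family contributes nothing to the sum $\sum_{p}\theta_{\cY_{p}}$, the intersection number $(\widetilde{f},\theta)$ will equal $\deg f^{*}\theta_{0}<0$, which is what is needed to conclude $\theta\notin\PEff_{\orb}(\cX)$.

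First, identifying $\theta_{0}$ with its image $\bar{\theta}_{0}\in\N^{1}(\overline{\cX})_{\RR}$ via Proposition \ref{prop:iso-Pic}, the hypothesis says that $\bar{\theta}_{0}$ is not pseudo-effective on the projective $\QQ$-factorial variety $\overline{\cX}$. Applying the duality of \cite{boucksom2013thepseudoeffective} on a smooth projective resolution and pushing forward yields a covering family of curves $\widetilde{g}\colon\widetilde{D}\to\overline{\cX}_{\overline{F}}$ whose general fiber $D_{t}$ satisfies $D_{t}\cdot\bar{\theta}_{0}<0$.

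To transport $\widetilde{g}$ to $\cX_{\overline{F}}$, I fix a finite surjective morphism $\phi\colon Y\to\cX$ from a smooth projective scheme $Y$; such a cover always exists for a nice DM stack (e.g.\ via Kresch's presentation of $\cX$ as a global quotient $[V/G]$, followed by an equivariant projectivization and a desingularization). I form the fiber product $Y_{\overline{F}}\times_{\overline{\cX}_{\overline{F}}}\widetilde{D}\to T$, restrict to an irreducible component dominating $Y_{\overline{F}}$, and normalize in fibers to obtain a covering family $\widetilde{h}\colon\widetilde{C}\to Y_{\overline{F}}$ of smooth projective irreducible curves. The composition $\widetilde{f}:=\phi\circ\widetilde{h}\colon\widetilde{C}\to\cX_{\overline{F}}$ is then a covering family of stacky curves on $\cX_{\overline{F}}$, and every fiber $C_{t}$ is a scheme, so $f\colon C_{t}\to\cX_{\overline{F}}$ is automatically representable and has no stacky points on $C_{t}$.

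Finally, for a general member, the coarse composition $\pi\circ f$ factors as $C_{t}\to D_{t}\to\overline{\cX}_{\overline{F}}$ with $C_{t}\to D_{t}$ finite of some positive degree $n$ bounded by $\deg\phi$, so that $\deg f^{*}\theta_{0}=\deg(\pi\circ f)^{*}\bar{\theta}_{0}=n\,(D_{t}\cdot\bar{\theta}_{0})<0$. Combined with the vanishing of the stacky sum, this gives $(\widetilde{f},\theta)<0$ and hence $\theta\notin\PEff_{\orb}(\cX)$. The main obstacle is the third step: producing the projective scheme cover $\phi$ and verifying that the pulled-back family, after the choice of an irreducible component and fiberwise normalization, is still a bona fide covering family of smooth projective irreducible curves dominating $\cX_{\overline{F}}$. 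These are standard operations but need to be justified carefully.
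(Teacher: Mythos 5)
Your strategy is the same in spirit as the paper's: produce a covering family of curves whose members live in a scheme, so the stacky contribution to the intersection number vanishes, and the negativity of $\deg f^{*}\theta_{0}$ kills pseudo-effectivity. The routes differ in where the family is constructed. You build the moving family on $\overline{\cX}$ first (using $\theta_{0}$ non-pseudo-effective there and BDPP duality), then pull it back along a scheme cover $Y\to\cX$ and push to $\cX$; the paper instead takes a finite scheme cover $W\to\cX$, Galois over $\overline{\cX}$ with group $G$, uses that $(\N^{1}(W)_{\RR})^{G}\cong\N^{1}(\overline{\cX})_{\RR}$ preserves pseudo-effectivity, and finds the moving curve directly on $W$. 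These are essentially equivalent, and each has its own overhead: your version needs to justify that the pull-back/normalization step yields a genuine covering family, while the paper's version needs the Galois-invariant comparison of N\'eron--Severi spaces.

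One claim in your proposal is suspect and should be repaired: ``a finite surjective morphism $\phi\colon Y\to\cX$ from a \emph{smooth} projective scheme $Y$ always exists'' --- your own sketch of proof, desingularizing after a quotient presentation, destroys finiteness, and in general one cannot have both. The standard fact (Stacks Project tag 04V1) gives a finite surjection from a scheme that will typically be singular. Fortunately your argument does not actually use smoothness of $Y$: you only need $Y$ projective and $Y\to\cX$ finite (or even just generically finite, dominant, and proper), since you normalize the fibers of the pulled-back family in any case, and smoothness of the total space $\widetilde{C}$ over $T$ is arranged after shrinking $T$. With the smoothness hypothesis dropped and the fiberwise smoothing made explicit, the argument goes through.
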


\begin{proof}
There exits a finite morphism $W\to\cX$ from a scheme $W$ (see \cite[tag 04V1]{stacksprojectauthors2022stacksproject}).
We may suppose that $W\to\overline{\cX}$ is a Galois cover, say with
Galois group $G$. Then, $W$ is $\QQ$-factorial and we have an isomorphism
\[
(\N^{1}(W)_{\RR})^{G}\cong\N^{1}(\overline{\cX})_{\RR}.
\]
Moreover, this isomorphism preserves the pseudo-effectivity. Thus,
if $\theta_{0}\in\N^{1}(\cX)_{\RR}=\N^{1}(\overline{\cX})_{\RR}$
is not pseudo-effective, then the corresponding $\eta\in(\N^{1}(W)_{\RR})^{G}$
is not pseudo-effective, either. Therefore, there exists a moving
curve $C$ of $W$ with $(C,\eta)<0$. Then, $C\to\cX$ induces a
covering family of stacky curves whose source is a scheme. Since $C$
has no stacky point, $\theta_{\cY}$'s do not contribute to the intersection
number $(C,\theta)$ and we have
\[
(C,\theta)=(C,\eta)<0.
\]
We see that $\theta$ is not pseudo-effective.
\end{proof}
\begin{cor}
\label{cor:PEff inclusions}We have 
\[
\PEff(\cX)+\sum_{\cY}\RR_{\ge0}[\cY]\subset\PEff_{\orb}(\cX)\subset\PEff(\cX)+\sum_{\cY}\RR[\cY].
\]
\end{cor}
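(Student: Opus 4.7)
My plan is to handle the two inclusions separately: the left inclusion follows directly from the definitions of the intersection pairing and the orbifold pseudo-effective cone, while the right inclusion is essentially a restatement of Lemma \ref{lem:peff-peff}.

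For the left inclusion, I would take $\theta_0 \in \PEff(\cX)$ and nonnegative reals $\lambda_\cY \geq 0$ for each twisted sector $\cY$, and set $\theta := \theta_0 + \sum_\cY \lambda_\cY [\cY]$. To verify $\theta \in \PEff_{\orb}(\cX)$, I would pick an arbitrary covering family $\widetilde{f}\colon \widetilde{\cC}\to \cX_{\overline F}$ of stacky curves and compute $(\widetilde{f},\theta)$. The term $(\widetilde{f},\theta_0)$ is nonnegative because $\theta_0 \in \PEff(\cX) = \PEff_{\orb}(\cX)\cap \N^1(\cX)_\RR$. For each twisted sector $\cY$, unwinding the definition of the intersection number shows that $(\widetilde{f},[\cY])$ equals the number of stacky points $p$ on a general fiber $\pi^{-1}(t)$ with $\cY_p = \cY$, which is a nonnegative integer. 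Since all $\lambda_\cY \geq 0$, the sum is nonnegative, giving $(\widetilde{f},\theta)\geq 0$.

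For the right inclusion, I would take any $\theta = \theta_0 + \sum_\cY \theta_\cY [\cY] \in \PEff_{\orb}(\cX)$. The contrapositive of Lemma \ref{lem:peff-peff} then yields $\theta_0 \in \PEff(\overline\cX) = \PEff(\cX)$. Writing $\theta = \theta_0 + \sum_\cY \theta_\cY [\cY]$ exhibits $\theta$ as an element of $\PEff(\cX) + \sum_\cY \RR[\cY]$, as desired.

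I do not expect a serious obstacle: the only nontrivial ingredient is Lemma \ref{lem:peff-peff}, which is already established, and the remaining content amounts to the observation that the class $[\cY]$ pairs nonnegatively with every covering family of stacky curves. The only care needed is to remember that the components $\theta_\cY$ on the right-hand side are allowed to be arbitrary real numbers, so the second inclusion imposes no sign condition on them.
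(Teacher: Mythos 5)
Your proof is correct and follows exactly the paper's route: the left inclusion is the "obvious" part (linearity of the intersection pairing plus nonnegativity of $(\widetilde f,[\cY])$ as a count of stacky points), and the right inclusion is the contrapositive of Lemma \ref{lem:peff-peff}, which the authors state verbatim as "nothing but the last lemma."
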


\begin{proof}
The left inclusion is obvious. The right inclusion is nothing but
the last lemma. 
\end{proof}
\begin{lem}
\label{lem:cover}Let $K$ be an infinite field and let $n$ be a
positive integer coprime to the characteristic of $K$. Let $C$ be
a geometrically irreducible, smooth, and proper curve over $K$. Let
$p_{0},p_{1},\dots,p_{m}\in C(K)$ be $K$-points of $C$. Then, there
exist a geometrically irreducible, smooth, and proper curve $D$ over
$K$ and a finite morphism $f\colon D\to C$ of degree $n$ such that
for $i\in\{1,\dots,m\}$, $f^{-1}(p_{i})_{\red}\cong\Spec K$ and
$f$ is étale over $p_{0}$. 
\end{lem}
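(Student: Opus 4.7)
My strategy is to realize $f \colon D \to C$ as a Kummer-type cyclic cover. Concretely, choose a rational function $g \in K(C)^{*}$ such that
\[
v_{p_0}(g) = 0 \quad \text{and} \quad v_{p_i}(g) = 1 \text{ for } i = 1, \dots, m,
\]
and let $D$ be the normalization of $C$ in the field extension $L := K(C)(g^{1/n})$, with $f$ the structural morphism. In the edge case $m = 0$ the second condition is vacuous, so I additionally require $v_q(g) = 1$ at some auxiliary closed point $q \in C \setminus \{p_0\}$; this is needed to force the extension to be nontrivial and geometrically irreducible.

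\textbf{Construction of $g$ and main properties.} Existence of $g$ is a standard Riemann--Roch statement: for an effective divisor $E$ on $C$ supported away from $\{p_0, p_1, \dots, p_m, q\}$ and of sufficiently large degree, the evaluation map
\[
H^0(C, \cO_C(E)) \to (\cO_{C,p_0}/\fm_{p_0}) \oplus \bigoplus_{i=1}^m (\cO_{C,p_i}/\fm_{p_i}^2) \oplus (\cO_{C,q}/\fm_q^2)
\]
is surjective (its cokernel lies in an $H^1$ that vanishes by Serre duality), and any preimage of $(1, t_1, \dots, t_m, t_q)$ for local uniformizers $t_i, t_q$ has the desired valuation profile. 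Since $v_{p_i}(g) = 1$ for some $i$ (or $v_q(g) = 1$), $g$ is not an $\ell$-th power in $K(C)$ for any prime $\ell \mid n$, and $-4g$ is not a fourth power, so by the classical criterion $X^n - g$ is irreducible over $K(C)$ and $[L : K(C)] = n$. Smoothness of $C$ over $K$ guarantees that the residue field at every closed point of $C$ is separable over $K$, so uniformizers at $p_i$ or $q$ remain uniformizers at each geometric point above; hence the same irreducibility argument applies verbatim over $\overline{K}(C)$, yielding geometric irreducibility of $D$. Properness of $D$ is immediate from finiteness of $f$; smoothness of $D$ over $K$ follows from tameness of the Kummer cover ($\characteristic K \nmid n$) together with the fact that $D_{\overline{K}}$, being the normalization of the smooth curve $C_{\overline{K}}$ in a finite separable degree-$n$ extension, is a smooth curve over $\overline{K}$.

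\textbf{Ramification at $p_i$, étaleness at $p_0$, and main obstacle.} At each $p_i$ with $i \geq 1$, write $g = t_i u$ with $u \in \cO_{C,p_i}^{*}$; by Eisenstein, $\cO_{C,p_i}[y]/(y^n - g)$ is already a DVR with uniformizer $y$ and residue field $K$, giving $f^{-1}(p_i)_{\red} \cong \Spec K$ (ramification index $n$, residue degree $1$). At $p_0$, $g \in \cO_{C,p_0}^{*}$, so $y$ is a unit above $p_0$, and the derivative $n y^{n-1}$ is a unit there because $n$ is invertible in $K$; this makes $f$ étale over $p_0$. The most delicate step in the whole argument is establishing geometric irreducibility of $D$ when $K$ is imperfect: one must ensure that the ``simple zero'' condition on $g$ at a closed point survives base change to $\overline{K}(C)$. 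This is precisely where the separability of residue fields of closed points on the smooth $K$-curve $C$ becomes essential; everything else reduces to routine Riemann--Roch and standard tameness properties of cyclic covers in characteristic coprime to $n$.
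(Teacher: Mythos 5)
Your construction is the same Kummer cover the paper uses: both produce a rational function $g$ (the paper's $f$) with simple zeros at $p_1,\dots,p_m$ and nonvanishing at $p_0$, then take the degree-$n$ cyclic cover branched along the divisor of $g$. The paper produces $g$ by intersecting an affine chart of $C$ with generic hyperplanes through each $p_i$ (this is where it uses that $K$ is infinite); you produce it by a Riemann--Roch approximation argument. The two are equivalent in practice, and you are more careful than the paper about irreducibility and about the case $m=0$.

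However, your treatment of the $m=0$ case contains a genuine error. You assert that smoothness of $C$ over $K$ forces every closed point of $C$ to have separable residue field over $K$. This is false over an imperfect base: on $\PP^1_K$ with $K=\FF_p(t)$, the closed point cut out by $x^p-t$ has residue field $\FF_p(t^{1/p})$, purely inseparable over $K$, even though $\PP^1_K$ is smooth. At such a $q$, a local uniformizer on $C$ becomes a $p$-th power on $C_{\overline{K}}$, so the condition $v_q(g)=1$ does not descend to $\overline{K}(C)$ and your geometric-irreducibility argument breaks. This only affects the auxiliary point $q$ you add when $m=0$; for the $K$-rational points $p_i$ the residue field is $K$ itself, so your argument is complete whenever $m\ge 1$. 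The $m=0$ case is repaired by choosing $q$ to have separable residue field over $K$: since $C$ is geometrically reduced, $K(C)/K$ is separably generated, so there is a finite generically étale map $\pi\colon C\to\PP^1_K$, and one may take $q$ lying above any $K$-point of $\PP^1$ outside the branch locus.
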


\begin{proof}
We take an affine open subscheme $C'=\Spec R\subset C$ containing
all of $p_{0},\dots,p_{m}$ and suppose that it is embedded in the
affine space $\AA_{K}^{l}$. Since $K$ is an infinite field, for
each $i>0$, there exists a hyperplane $H_{i}=\{f_{i}=0\}\subset\AA_{K}^{l}$
which intersects with $C'$ transversally at $p_{i}$, but does not
meet $p_{j}$ for any $j\ne i$. Let $f\in R$ be the restriction
of $\prod_{i>0}f_{i}$ to $C'$. This function on $C'$ has zeroes
of order one at each of $p_{1},\dots,p_{m}$ and does not vanish at
$p_{0}$. It follows that the finite cover 
\[
D':=\Spec R[x]/(x^{n}-f)\to C'
\]
satisfies the desired property except the properness. We only need
to take projective compactification. 
\end{proof}
\begin{prop}
\label{prop:choosing-sector}Let $\cX$ be a nice stack and let $\cY_{0}$
be a twisted sector of $\cX$. Then, for any positive integer $n$,
there exists a covering family $\widetilde{f}\colon\widetilde{\cC}\to\cX_{\overline{F}}$
of stacky curves such that if we write 
\[
[\widetilde{f}]_{\orb}=[\widetilde{f}]+\sum_{\cY\in\pi_{0}^{*}(\cJ_{0}\cX)}\theta_{\widetilde{f},\cY}[\cY]^{*}\quad(\theta_{\cY}\in\ZZ_{\ge0})
\]
(see Remark \ref{rem:intersetion-pairing}), then
\[
\theta_{\widetilde{f},\cY_{0}}>n\sum_{\cY\ne\cY_{0}}\theta_{\widetilde{f},\cY}.
\]
\end{prop}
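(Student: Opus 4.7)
The plan is to build a single stacky curve $\cD\to\cX_{\overline{F}}$ whose stacky-point contributions are overwhelmingly concentrated in $\cY_{0}$, and then deform it algebraically to a covering family. \textbf{First,} construct a stacky curve $\cC_{0}\to\cX_{\overline{F}}$ carrying a distinguished stacky point of sector $\cY_{0}$. Fix a representative $(x_{0},\iota\colon\mu_{l_{0}}\hookrightarrow G=\Aut(x_{0}))$ of $\cY_{0}$ and use the local structure theorem for DM stacks to produce an étale chart $[V/G]\to\cX_{\overline{F}}$ about $y_{0}$ with $V$ smooth and a $G$-fixed lift $\tilde{x}_{0}\in V$. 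Choose a smooth $\mu_{l_{0}}$-invariant irreducible curve $\tilde{C}\subset V$ through $\tilde{x}_{0}$ whose tangent direction is a $\mu_{l_{0}}$-stable line carrying a faithful character; for a generic such choice $\mu_{l_{0}}$ acts freely on $\tilde{C}\setminus\{\tilde{x}_{0}\}$. Then $[\tilde{C}/\mu_{l_{0}}]\to[V/G]\to\cX_{\overline{F}}$ is representable, with the stacky point above $\tilde{x}_{0}$ of sector $\cY_{0}$. After extending and compactifying, I obtain a proper stacky curve $f_{0}\colon\cC_{0}\to\cX_{\overline{F}}$ with stacky points $p_{0},p_{1},\dots,p_{m}$ of stabilizer orders $l_{i}$ and sectors $\cY_{p_{i}}$ (with $\cY_{p_{0}}=\cY_{0}$).

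\textbf{Second,} amplify the $\cY_{0}$-contribution via Lemma~\ref{lem:cover}. Let $C_{0}$ denote the coarse moduli of $\cC_{0}$ and $\bar{p}_{i}\in C_{0}$ the images of $p_{i}$. Given $n$, pick an integer $k>nm$ such that $\gcd(k,l_{i})=1$ for every $i$ and such that, for each $i\geq 1$, the $k$-th power automorphism of $\ulHom_{\overline{F}}^{\rep}(\B\mu_{l_{i}},\cX_{\overline{F}})$ does not send $\cY_{p_{i}}$ to $\cY_{0}$; these amount to finitely many congruence conditions on $k$, so admissible $k$ exist arbitrarily large. Applying Lemma~\ref{lem:cover} over $K=\overline{F}$ to $(C_{0},\bar{p}_{0},\bar{p}_{1},\dots,\bar{p}_{m})$ produces a geometrically irreducible smooth proper cover $g\colon D\to C_{0}$ of degree $k$ which is étale over $\bar{p}_{0}$ and totally ramified over each $\bar{p}_{i}$ with $i\geq 1$. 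Set $\cD:=D\times_{C_{0}}\cC_{0}$. A direct local computation in the charts $\cC_{0}=[\Spec\overline{F}\tbrats/\mu_{l_{i}}]$ and $D=\Spec\overline{F}\tbrats$ using the relations $u^{l_{i}}=s=r^{k}$ (parametrize $r=t^{l_{i}}$, $u=t^{k}$, invoking $\gcd(k,l_{i})=1$) shows $\cD$ is a stacky curve with $k$ stacky points of sector $\cY_{0}$ over $\bar{p}_{0}$ and a single stacky point over each $\bar{p}_{i}$ ($i\geq 1$) with stabilizer $\mu_{l_{i}}$ mapping to $\cX_{\overline{F}}$ by $y_{p_{i}}$ pre-composed with the $k$-th power isogeny, hence of sector $\cY_{p_{i}}^{[k]}\neq\cY_{0}$. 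Therefore $\theta_{\cD,\cY_{0}}\geq k>nm\geq n\sum_{\cY\neq\cY_{0}}\theta_{\cD,\cY}$.

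\textbf{Third,} promote $\cD$ to a covering family. If $\dim\cX\leq 1$, the morphism $\cD\to\cX_{\overline{F}}$ is already dominant, and we take $T=\Spec\overline{F}$. In general the entire construction is algebraic in its inputs: $\tilde{x}_{0}$ varies in the (positive-dimensional) fixed locus $V^{\mu_{l_{0}}}$, the tangent line in the projectivization of the $\mu_{l_{0}}$-faithful summand of $T_{\tilde{x}_{0}}V$, the compactified extension of $\cC_{0}$ in a Hilbert scheme, and the cover $g$ in a Hurwitz space with prescribed ramification profile. Choosing an irreducible parameter scheme $T$ containing our chosen data and large enough that the universal map $\widetilde{\cC}\to\cX_{\overline{F}}$ is dominant produces the required covering family, whose generic fiber is isomorphic to $\cD$; the inequality transfers because $[\widetilde{f}]_{\orb}$ is computed on a general fiber. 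The main obstacle is this last step: exhibiting a dominant deformation while preserving the generic sector profile. It reduces to verifying that $V^{\mu_{l_{0}}}$ sweeps out the $\cY_{0}$-stratum of $\cX_{\overline{F}}$ and that the transverse non-stacky deformations of $\cC_{0}$ reach a general point of $\cX_{\overline{F}}$.
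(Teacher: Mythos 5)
The key inputs you identify — a local quotient chart to produce a stacky point of sector $\cY_0$, followed by Lemma~\ref{lem:cover} to amplify its multiplicity — are exactly the right tools, and your Step~2 bookkeeping of the stacky points upstairs is sound (though the coprimality conditions and the condition that the $k$-th power isogeny avoid $\cY_0$ over the other $p_i$'s are unnecessary: the argument only needs the count of stacky points over $\bar p_1,\dots,\bar p_m$ to be at most $m$, irrespective of which sectors they land in).

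The genuine gap is Step~3, which you yourself flag as ``the main obstacle.'' After building a single stacky curve $\cD\to\cX_{\overline F}$ over $\overline F$, you must deform it to a \emph{dominant} family while keeping, generically, the same ratio of $\cY_0$-contributions to other contributions. You gesture at a Hurwitz-space deformation and at varying the fixed-locus data, but you never establish that such a deformation space maps dominantly to $\cX_{\overline F}$, nor that the sector profile is stable on an open dense subset of it. This is precisely the step that can fail: a curve constructed inside a single chart may have all its deformations confined to a neighbourhood of that chart, and in any case dominance is a global condition that ``choosing $T$ large enough'' does not by itself produce. The paper avoids the issue entirely by reversing the order of operations: it \emph{first} fixes a covering family $(\widetilde{\cC}\to T,\widetilde f)$ with $\theta_{\widetilde f,\cY_0}>0$ (dominance is then built in from the outset), \emph{then} applies Lemma~\ref{lem:cover} to the curve $\widetilde{\cC}_L$ over $L=\overline{K(T)}$ — the lemma is stated over an arbitrary infinite field exactly so this is possible — and finally spreads out over a finitely generated $\overline F$-subalgebra $R\subset L$. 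Dominance and the sector profile persist automatically under this spreading-out, because they are conditions on the generic fiber. If you run your Steps~1 and~2 over $L=\overline{K(T)}$ rather than over $\overline F$, your argument closes and becomes essentially the paper's.
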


\begin{proof}
In this proof, we identify geometric points of a DM stack and ones
of the coarse moduli space. Using a local description of $\cX_{\overline{F}}$
as a quotient stack, we can construct a covering family $(\pi\colon\widetilde{\cC}\to T,\widetilde{f}\colon\widetilde{\cC}\to\cX_{\overline{F}})$
such that $\theta_{\widetilde{f},\cY_{0}}>0$. Let $\widetilde{C}$
denote the coarse moduli space of $\widetilde{\cC}$. Let $L=\overline{K(T)}$
be an algebraic closure of the function field $K(T)$ of $T$. Let
\[
p_{0},\dots,p_{m}\in\widetilde{\cC}_{L}\langle L\rangle=\widetilde{C}_{L}(L)
\]
be stacky points of $\widetilde{\cC}_{L}$. For an integer $n'>nm$,
we take a finite cover $\widetilde{D}_{L}\to\widetilde{C}_{L}$ of
degree $n'$ as in Lemma \ref{lem:cover}. The induced rational map
$\widetilde{D}_{L}\to\widetilde{C}_{L}\dashrightarrow\cX_{\overline{F}}$
uniquely extends to a representable morphism 
\[
\widetilde{g}_{L}\colon\widetilde{\cD}_{L}\to\cX_{\overline{F}},
\]
where $\widetilde{\cD}_{L}$ is a smooth proper DM stack over $L$
with the coarse moduli space morphism $\widetilde{\cD}_{L}\to\widetilde{D}_{L}$.
The only stacky $L$-points of $\widetilde{\cD}_{L}$ are the $n'$
points $r_{0,1},\dots,r_{0,n'}$ lying over $p_{0}$ and possibly
some of the points $r_{1},\dots,r_{m}$ lying over $p_{1},\dots p_{m}$
respectively. If we define sectors of $\cX$ associated to stacky
$L$-points of $\widetilde{\cD}_{L}$ in the same way as we did for
stacky $\overline{F}$-points before, then for every $j\in\{1,\dots,n'\}$,
we have $\cY_{r_{0.j}}=\cY_{0}$. If we define $[\widetilde{g}_{L}]_{\orb}\in\N_{1,\orb}(\cX)$
similarly as before, then we get
\begin{align*}
[\widetilde{g}_{L}]_{\orb} & =[\widetilde{g}_{L}]+\sum_{j=1}^{n'}[\cY_{r_{0,j}}]^{*}+\sum_{i=1}^{m}[\cY_{r_{i}}]^{*}\\
 & =[\widetilde{g}]+n'[\cY_{0}]^{*}+\sum_{i=1}^{m}[\cY_{r_{i}}]^{*}.
\end{align*}
It follows that
\[
\theta_{\widetilde{g}_{L},\cY_{0}}\ge n'>nm\ge n\sum_{\cY\ne\cY_{0}}\theta_{\widetilde{g}_{L},\cY}.
\]
This is basically the desired inequality, except that $\widetilde{\cD}_{L}$
is defined not over an $\overline{F}$-variety, but over $L=\overline{K(T)}$.
By a standard argument, we can find a finitely generated $\overline{F}$-subalgebra
$R\subset L$ such that $\widetilde{\cD}_{L}$ is the base change
of an $R$-stack $\widetilde{\cD}_{R}$ and the morphism $\widetilde{g}_{L}$
is induced from a morphism $\widetilde{\cD}_{R}\to\cX_{\overline{F}}$.
The pair $(\widetilde{\cD}_{R}\to\Spec R,\widetilde{\cD}_{R}\to\cX_{\overline{F}})$
is a covering family of stacky curves that has the desired property. 
\end{proof}
\begin{prop}
\label{prop:negative-coeff}Consider an element $\eta\in\N_{\orb}^{1}(\cX)$
of the form $\sum_{\cY}\eta_{\cY}[\cY]$, $\eta_{\cY}\in\RR$ and
suppose that $\eta_{\cY_{0}}<0$ for some twisted sector $\cY_{0}$.
Then, $\eta\notin\PEff_{\orb}(\cY)$. Namely, 
\[
\PEff_{\orb}(\cX)\cap\bigoplus_{\cY\in\pi_{0}^{*}(\cJ_{0}\cX)}\RR[\cY]=\sum_{\cY\in\pi_{0}^{*}(\cJ_{0}\cX)}\RR_{\ge0}[\cY].
\]
\end{prop}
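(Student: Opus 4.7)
The plan is to apply Proposition \ref{prop:choosing-sector} to the sector $\cY_0$ so as to produce a single covering family of stacky curves whose intersection with $\eta$ is strictly negative. The geometric work is entirely packaged in Proposition \ref{prop:choosing-sector}; what remains for me is an elementary arithmetic estimate on the pairing of Remark \ref{rem:intersetion-pairing}.

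Set $a := -\eta_{\cY_0} > 0$ and $M := \max\{|\eta_{\cY}| \mid \cY \in \pi_0^*(\cJ_0\cX)\setminus\{\cY_0\}\}$, with the convention $M := 0$ if no such $\cY$ exists. Choose a positive integer $n$ with $an > M$. By Proposition \ref{prop:choosing-sector}, there exists a covering family $\widetilde{f}\colon \widetilde{\cC} \to \cX_{\overline{F}}$ of stacky curves whose associated coefficients satisfy
\[
\theta_{\widetilde{f},\cY_0} > n \sum_{\cY \ne \cY_0} \theta_{\widetilde{f},\cY},
\]
with all $\theta_{\widetilde{f},\cY} \in \ZZ_{\ge 0}$; in particular $\theta_{\widetilde{f},\cY_0} \ge 1$. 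Since $\eta$ has zero $\N^1$-component, Remark \ref{rem:intersetion-pairing} gives
\[
(\widetilde{f},\eta) \;=\; -a\,\theta_{\widetilde{f},\cY_0} \,+\, \sum_{\cY \ne \cY_0} \eta_{\cY}\,\theta_{\widetilde{f},\cY}.
\]

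Writing $s := \sum_{\cY \ne \cY_0} \theta_{\widetilde{f},\cY}\ge 0$, the estimate $\sum_{\cY\ne\cY_0}\eta_{\cY}\theta_{\widetilde{f},\cY}\le Ms$ together with $a\,\theta_{\widetilde{f},\cY_0} > an\,s$ (strict even in the degenerate case $s=0$, since then $\theta_{\widetilde{f},\cY_0}\ge 1$ forces $-a\,\theta_{\widetilde{f},\cY_0}\le -a < 0$) combine to give
\[
(\widetilde{f},\eta) \;<\; (M-an)\,s \;\le\; 0,
\]
so $\eta$ fails to be pseudo-effective. This proves the first assertion. For the displayed equality, the inclusion $\supseteq$ is the left-hand inclusion in Corollary \ref{cor:PEff inclusions} applied with vanishing $\N^1$-component, while $\subseteq$ is the contrapositive of what was just shown, applied to an arbitrary element of the intersection.

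I do not anticipate any real obstacle: Proposition \ref{prop:choosing-sector} has absorbed the geometric difficulty (constructing stacky curves that concentrate intersection at a prescribed sector), and the remainder is a one-line numerical comparison. The only mildly subtle point is ensuring strictness in the degenerate situation $s=0$, which is handled by the integrality of the coefficients $\theta_{\widetilde{f},\cY}$.
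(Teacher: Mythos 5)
Your proof is correct and follows the same strategy as the paper's: apply Proposition \ref{prop:choosing-sector} to the sector $\cY_0$ with the negative coefficient and deduce a strictly negative intersection number by an elementary estimate. The differences are cosmetic only --- the paper lets $n\to\infty$ and uses $\max\{\eta_{\cY}\mid\cY\ne\cY_0\}$ while you fix $n$ once (with $an>M$) and use $\max|\eta_{\cY}|$, and you spell out the degenerate case $s=0$ that the paper handles implicitly.
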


\begin{proof}
For each $n>0$, we take a covering family $\widetilde{f}_{n}\colon\widetilde{\cC}_{n}\to\cX_{\overline{F}}$
of stacky curves as in Proposition \ref{prop:choosing-sector}. Then,
\begin{align*}
(\widetilde{f}_{n},\eta) & =\sum_{\cY}\theta_{\widetilde{f}_{n},\cY}\cdot\eta_{\cY}\\
 & =\theta_{\widetilde{f}_{n},\cY_{0}}\cdot\eta_{\cY_{0}}+\sum_{\cY\ne\cY_{0}}\theta_{\widetilde{f}_{n},\cY}\cdot\eta_{\cY}\\
 & \le\theta_{\widetilde{f}_{n},\cY_{0}}\cdot\eta_{\cY_{0}}+\left(\sum_{\cY\ne\cY_{0}}\theta_{\widetilde{f}_{n},\cY}\right)\max\{\eta_{\cY}\mid\cY\ne\cY_{0}\}.
\end{align*}
For sufficiently large $n$, the last expression is negative, which
implies that $\eta\notin\PEff_{\orb}(\cY)$. The equality of the proposition
follows from Corollary \ref{cor:PEff inclusions}.
\end{proof}
\begin{defn}\label{def:big}
We say that a raised line bundle $(\cL,c)$ on $\cX$ is \emph{big
}if its numerical class $[\cL,c]$ lies in the interior of $\PEff_{\orb}(\cX)$. 
\end{defn}

\begin{lem}
Let $(\cL,c)$ be a raised line bundle on $\cX$. Suppose that $\cX$
has positive dimension. Then, the following conditions are equivalent:
\begin{enumerate}
\item $\cL$ is big, that is, $[\cL]$ is in the interior of the cone $\PEff(\cX)$.
\item $[\cL,c]$ is big. 
\end{enumerate}
\end{lem}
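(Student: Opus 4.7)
Plan. I prove the two implications separately.

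The direction $(2)\Rightarrow(1)$ is straightforward: the projection $\pi\colon\N_{\orb}^1(\cX)_\RR \to \N^1(\cX)_\RR$ forgetting the twisted-sector coordinates is a continuous surjective linear map, hence open. By Lemma~\ref{lem:peff-peff}, $\pi(\PEff_{\orb}(\cX))\subset\PEff(\cX)$. If $U$ is an open neighborhood of $[\cL,c]$ inside $\PEff_{\orb}(\cX)$, then $\pi(U)$ is an open neighborhood of $[\cL]=\pi([\cL,c])$ contained in $\PEff(\cX)$, giving $(1)$.

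For $(1)\Rightarrow(2)$ I rely on the dual-cone description from Remark~\ref{rem:intersetion-pairing}: the dual cone $\cC\subset\N_{1,\orb}(\cX)_\RR$ of $\PEff_{\orb}(\cX)$ is the closure of the convex cone generated by the classes $[\widetilde{f}]_{\orb}=[\widetilde{f}]+\sum_{\cY}\theta_{\widetilde{f},\cY}[\cY]^*$ of covering families. Then $[\cL,c]\in\mathrm{int}(\PEff_{\orb}(\cX))$ if and only if $\langle[\cL,c],y\rangle>0$ for every $y\in\cC\setminus\{0\}$. Writing $y=y_0+\sum_{\cY}y_\cY[\cY]^*$, one sees that $y_0$ lies in the closure of the moving-curve cone, which equals $\PEff(\cX)^{*}$ by Boucksom--Demailly--Paun--Peternell duality, and each $y_\cY\ge 0$, since both properties hold for every generator and pass to the closure.

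Computing
\[
\langle[\cL,c],y\rangle=\langle[\cL],y_0\rangle+\sum_{\cY}c(\cY)\,y_\cY,
\]
both summands are nonnegative (using $c\ge0$). When $y_0\neq 0$, the bigness hypothesis $[\cL]\in\mathrm{int}(\PEff(\cX))$ forces $\langle[\cL],y_0\rangle>0$, and the pairing is strictly positive.

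The crux is the residual case $y_0=0$, where I would argue that $y=0$, so that this case does not actually arise. This reduces to the claim that the projection $\cC\to\N_1(\cX)_\RR$ has trivial kernel, which in turn follows from a B\'ezout-type estimate: for each ample class $[A]$ on $\cX$ and each twisted sector $\cY$, there is a constant $M_\cY$ with $\theta_{f,\cY}\le M_\cY\cdot\langle[A],[f]\rangle$ for every stacky curve $f\colon C\to\cX_{\overline F}$. The bound is obtained by covering the image of $\cY$ in the coarse moduli $\overline\cX$ by a divisor $D$ of class proportional to $[A]$ and bounding the preimages in $\bar C$ of $\bar f(\bar C)\cap D$. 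Given the bound, any representation $y=\lim_n\sum_i a_{n,i}[\widetilde f_{n,i}]_{\orb}$ with $a_{n,i}\ge0$ and $y_0=0$ satisfies $\sum_i a_{n,i}\langle[A],[\widetilde f_{n,i}]\rangle\to 0$, forcing $\sum_i a_{n,i}\theta_{\widetilde f_{n,i},\cY}\to 0$ for every $\cY$, hence $y=0$. Establishing this uniform B\'ezout-type bound is the main obstacle in the argument.
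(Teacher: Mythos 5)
Your direction $(2)\Rightarrow(1)$ is essentially the paper's (it is the content of the right inclusion in Corollary~\ref{cor:PEff inclusions}, equivalently the openness of the forgetful projection). For $(1)\Rightarrow(2)$ you also follow the paper's strategy, passing to the dual cone and trying to prove strict positivity of $(\alpha,[\cL,c])$ on $\PEff_{\orb}(\cX)^{\vee}\setminus\{0\}$, and you have correctly isolated the one genuinely delicate point: one must exclude nonzero $\alpha\in\PEff_{\orb}(\cX)^{\vee}$ whose $\N_{1}(\cX)_{\RR}$--component vanishes. The paper's own proof passes over this silently --- the chain $(\alpha,[\cL,c])\ge(\alpha,[\cL])>0$ is unjustified exactly when $\alpha_{0}=0$ --- so your diagnosis of where the work lies is sharper than what is written in the paper.

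The difficulty is that the B\'ezout-type bound you propose as the fix, $\theta_{f,\cY}\le M_{\cY}\langle[A],[f]\rangle$, cannot hold in general: it fails whenever a twisted sector sits over a dense locus of $\cX$, i.e.\ whenever $\cX$ has a nontrivial generic stabilizer. Concretely, for $\cX=\PP^{1}_{\overline{F}}\times\B\mu_{2}$, rooting $\PP^{1}$ along $2n$ distinct points gives a covering family of stacky curves with orbifold class $[\text{line}]+2n[\cY_{0}]^{*}$; dividing by $2n$ and passing to the closure shows $[\cY_{0}]^{*}\in\PEff_{\orb}(\cX)^{\vee}$. Then $[\cO(1)]$ is interior to $\PEff(\cX)$ yet pairs to $0$ with this nonzero dual class, so $[\cO(1),\,0]$ lies on the boundary of $\PEff_{\orb}(\cX)$: as stated, with $c\equiv 0$, the equivalence fails. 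So the obstacle you flag is not merely an unproved estimate but a step that is false without an extra hypothesis --- e.g.\ that $\cX$ has trivial generic stabilizer (then each twisted sector lies over a proper closed substack, a suitable ample divisor contains its image, and your covering/B\'ezout argument does go through), or that $c>0$ on every twisted sector whose image is dense. Under such a hypothesis your route is sound and is essentially a cleaned-up version of what the paper's proof implicitly needs.
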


\begin{proof}
From Corollary \ref{cor:PEff inclusions}, the second condition implies
the first one. We now assume the first condition and prove the second
condition. For $\alpha\in\PEff_{\orb}(\cX)^{\vee}$, we have
\[
(\alpha,[\cL,c])\ge(\alpha,[\cL])>0.
\]
Corollary \ref{cor:PEff inclusions} implies
\[
\PEff_{\orb}(\cX)^{\vee}\subset\PEff(\cX)^{\vee}+\sum_{\cY}\RR_{\ge0}[\cY]^{*}.
\]
It follows that there exists a hyperplane $H\subset\N_{1,\orb}(\cX)_{\RR}$
such that $H$ does not contain the origin and the intersection $\PEff_{\orb}(\cX)^{\vee}\cap H$
is a non-empty compact set. From the extreme value theorem, the function
given by the intersection pairing with $[\cL,c]$,
\[
\PEff_{\orb}(\cX)^{\vee}\cap H\to\RR_{>0},\,\alpha\mapsto(\alpha,[\cL,c]),
\]
has a positive minimum value. Perturbing $[\cL,c]$ a little in $\N_{\orb}^{1}(\cX)_{\RR}$
in an arbitrary direction does not break the positivity of the minimum
value of the associated function $\PEff_{\orb}(\cX)^{\vee}\cap H\to\RR$.
This means that $[\cL,c]$ is not pushed out from $\PEff_{\orb}(\cX)$
by a small perturbation in an arbitrary direction. Namely, $[\cL,c]$
is in the interior of $\PEff_{\orb}(\cX)$.
\end{proof}

\section{$a$- and $b$-invariants; the stacky Batyrev--Manin conjecture\label{sec:a-and-b}}

\subsection{$a$- and $b$-invariants and breaking thin subsets}

We keep denoting by $\cX$ a nice stack over $F$.

\begin{defn}
We define the\emph{ orbifold canonical class }of $\cX$ to be\emph{
}
\[
[K_{\cX,\orb}]:=[\omega_{\cX}]+\sum_{\cY}(\age(\cY)-1)[\cY]\in\N_{\orb}^{1}(\cX)_{\RR}.
\]
Here $\omega_{\cX}$ denotes the canonical line bundle of $\cX$.
\end{defn}

\begin{rem}
Generalizing Proposition \ref{prop:crep-jun}, we can show that, if
$\cX$ has trivial generic stabilizer, some twisted sectors $\cY$
of $\cX$ correspond to divisors $E$ over the coarse moduli space
$\overline{\cX}$. Then, for $\cY$ and $E$ corresponding to each
other, $\age(\cY)-1$ is equal to the discrepancy of $E$. This explains
the reason why the coefficient $\age(\cY)-1$ in the above definition
is natural.
\end{rem}

Now we are ready to define the $a$-invariant in the context of stacks:
\begin{defn}
We define the $a$\emph{-invariant }of a big raised line bundle $(\cL,c)$,
denoted by $a(\cL,c)$, to be the unique real number $a$ such that
$a[\cL,c]+[K_{\cX,\orb}]$ lies on the boundary of $\PEff_{\orb}(\cX)$. 
\end{defn}

As for a (non-raised) big line bundle $\cL$ on $\cX$, we define
the $a$-invariant $a(\cL)$ in a similar way by using the (non-orbifold)
pseudo-effective cone $\PEff(\cX)=\PEff(\overline{\cX})$ in $\N^{1}(\cX)_{\RR}=\N^{1}(\overline{\cX})_{\RR}$
and the canonical class $[\omega_{\cX}]$. Note that $a(\cL)$ may
be different from $a(L)$ of the corresponding $\QQ$-line bundle
$L$ on $\overline{\cX}$. This is because $[\omega_{\cX}]$ may be
different from $[\omega_{\overline{\cX}}']$, unless the morphism
$\cX\to\overline{\cX}$ is étale in codimension one. 

We generalize the notion of adequacy for a raising function of a Fano
stack as follows:
\begin{defn}
\label{def:adequacy-2}Let $(\cL,c)$ be a big raised line bundle.
We say that $(\cL,c)$ is \emph{adequate }if 
\begin{enumerate}
\item $\age_{c}(\cY)\ge1$ for every twisted sector $\cY$, 
\item if $\dim\cX>0$, then $a(\cL)=1$, and
\item if $\dim\cX=0$, then $\min\{c(\cY)\mid\cY\text{ twisted sector}\}=1$.
\end{enumerate}
When $\cX$ is a Fano stack, the raised line bundle $(\omega_{\cX}^{-1},c)$
is adequate if and only if $c$ is adequate. 
\end{defn}

\begin{rem}
\label{rem:adequate-normalization}The condition $a(\cL)=1$ may be
viewed as a normalization condition. Indeed, for $r\in\QQ$, $a(\cL^{r})=a(\cL)/r$. 
\end{rem}

\begin{prop}
If $(\cL,c)$ is adequate, then $a(\cL,c)=1$.
\end{prop}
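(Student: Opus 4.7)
The plan is to verify that $a=1$ puts the class $a[\cL,c]+[K_{\cX,\orb}]$ on the boundary of $\PEff_{\orb}(\cX)$ by placing it in the cone for $a=1$ and showing that any $a<1$ is excluded. Expanding,
\[
a[\cL,c]+[K_{\cX,\orb}]=a[\cL]+[\omega_{\cX}]+\sum_{\cY\in\pi_{0}^{*}(\cJ_{0}\cX)}\bigl(ac(\cY)+\age(\cY)-1\bigr)[\cY],
\]
so at $a=1$ the sectorial coefficient is exactly $\age_{c}(\cY)-1$. The cases $\dim\cX>0$ and $\dim\cX=0$ must be handled separately because the adequacy conditions in Definition \ref{def:adequacy-2} differ.

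Suppose first that $\dim\cX>0$. Adequacy gives $a(\cL)=1$, which means $[\cL]+[\omega_{\cX}]$ lies on the boundary of $\PEff(\cX)$, and in particular in $\PEff(\cX)$. Adequacy also gives $\age_{c}(\cY)-1\ge 0$ for every twisted sector, so Corollary \ref{cor:PEff inclusions} places $[\cL,c]+[K_{\cX,\orb}]$ in $\PEff_{\orb}(\cX)$; hence $a(\cL,c)\le 1$. Conversely, by the preceding lemma, bigness of $(\cL,c)$ forces $\cL$ itself to be big, and by the defining property of $a(\cL)=1$, for any $a'<1$ the class $a'[\cL]+[\omega_{\cX}]$ fails to be pseudo-effective on $\cX$. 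Lemma \ref{lem:peff-peff} then promotes this to $a'[\cL,c]+[K_{\cX,\orb}]\notin\PEff_{\orb}(\cX)$, so $a(\cL,c)\ge 1$ as well.

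Suppose now $\dim\cX=0$. Then $\N^{1}(\cX)_{\RR}=0$ and $\age\equiv 0$ on every sector, so the displayed expression collapses to $\sum_{\cY}(ac(\cY)-1)[\cY]$. By Proposition \ref{prop:negative-coeff}, this class lies in $\PEff_{\orb}(\cX)$ if and only if every coefficient $ac(\cY)-1$ is nonnegative, that is, if and only if $a\ge 1/\min_{\cY}c(\cY)$. The adequacy condition $\min_{\cY}c(\cY)=1$ makes this threshold equal to $1$, giving $a(\cL,c)=1$ in this case too. The argument is essentially an unwinding of the definition of $a(\cL,c)$ against the split in Definition \ref{def:adequacy-2}, with all three structural ingredients already in place; the only point requiring attention is the dimensional dichotomy, so there is no serious obstacle.
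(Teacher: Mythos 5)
Your proof is correct and takes essentially the same route as the paper's: for $\dim\cX>0$ you split into the same two directions, using $a(\cL)=1$ together with Lemma \ref{lem:peff-peff} for $a(\cL,c)\ge1$ and Corollary \ref{cor:PEff inclusions} for $a(\cL,c)\le1$. The only departure is cosmetic: where the paper defers the zero-dimensional case to Proposition \ref{prop:Malle-1}, you unwind it directly via Proposition \ref{prop:negative-coeff}, which is really the same computation.
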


\begin{proof}
The zero-dimensional case follows from Proposition \ref{prop:Malle-1}
proved later. We prove the case $\dim\cX>0$ here. For $a'<1$, 
\[
a'[\cL,c]+K_{\cX,\orb}=(a'[\cL]+[\omega_{\cX}])+\sum_{\cY}(a'\cdot c(\cY)+\age(\cY)-1)[\cY].
\]
Since $a(\cL)=1>a'$, we have that $a'[\cL]+[\omega_{\cX}]$ is not
pseudo-effective. From Lemma \ref{lem:peff-peff}, $a'[\cL,c]+[K_{\cX,\orb}]\notin\PEff_{\orb}(\cX)$.
This shows that $a(\cL,c)\ge1$. On the other hand, 
\[
[\cL,c]+[K_{\cX,\orb}]=([\cL]+[\omega_{\cX}])+\sum_{\cY}(\age_{c}(\cY)-1)[\cY].
\]
Since $[\cL]+[\omega_{\cX}]=a(\cL)[\cL]+[\omega_{\cX}]$ is pseudo-effective
and $\age_{c}(\cY)-1\ge0$, from Corollary \ref{cor:PEff inclusions},
$[\cL,c]+[K_{\cX,\orb}]$ is pseudo-effective. This shows that $a(\cL,c)\le1$.
\end{proof}
\begin{defn}
We define the $b$\emph{-invariant} $b(\cL,c)$ of a big raised line
bundle $(\cL,c)$ to be the codimension of the minimal face of $\PEff_{\orb}(\cX)$
containing $a(\cL,c)[\cL,c]+[K_{\cX,\orb}]$, that is, the dimension
of the following face of the dual cone $\PEff_{\orb}(\cX)^{\vee}\subset\N_{1,\orb}(\cX)_{\RR}$:
\[
\PEff_{\orb}(\cX)^{\vee}\cap\left(a(\cL,c)[\cL,c]+[K_{\cX,\orb}]\right)^{\perp}.
\]
 
\end{defn}

Recall that for a morphism $f\colon\cY\to\cX$ of nice stacks, we
have a natural morphism $\cJ_{0}\cY\to\cJ_{0}\cX$, and hence a raising
function $c$ of $\cX$ induces a raising function $f^{*}c$ of $\cY$.
\begin{defn}
Let us fix a big raised line bundle $(\cL,c)$ of $\cX$. A thin morphism
$f\colon\cY\to\cX$ of nice stacks is called a \emph{breaking thin
morphism} (resp.~a\emph{ weakly breaking thin morphism}) if the raised
line bundle $(f^{*}\cL,f^{*}c)$ is big and if 
\begin{gather*}
(a(f^{*}\cL,f^{*}c),b(f^{*}\cL,f^{*}c))>(a(\cL,c),b(\cL,c))\\
(\text{resp.\,}(a(f^{*}\cL,f^{*}c),b(f^{*}\cL,f^{*}c))\ge(a(\cL,c),b(\cL,c)))
\end{gather*}
in the lexicographic order. A \emph{(resp.~weakly) breaking thin
subset} of $\cX\langle F\rangle$ means a nonempty subset of $\cX\langle F\rangle$
which is the image of the map $\cY\langle F\rangle\to\cX\langle F\rangle$
associated to a (resp.~weakly) breaking thin morphism $\cY\to\cX$. 
\end{defn}

\begin{rem}
The breaking thin morphism is basically the same as the breaking thin
map in \cite{lehmann2019geometric} except that we work with stacks
and put the representability condition. If we did not put the representability
condition, then a morphism of the form $\cX\times\B G\to\cX$ for
a finite group scheme $G$ would become a breaking thin morphism.
We would not like to include such a map, since it induces a surjection
of $F$-point sets.
\end{rem}

\subsection{The stacky Batyrev--Manin conjecture}

We now formulate the first version of the Batyrev--Manin conjecture
for DM stacks as follows: 
\begin{conjecture}[The stacky Batyrev--Manin conjecture I]
\label{conj:general}  Let $\cX$ be a nice stack over $F$ whose coarse moduli space is geometrically rationally connected. Let $(\cL,c)$
be a raised line bundle which is big and adequate. Suppose that $\cX\langle F\rangle$
is Zariski dense. Then the union $T$ of breaking thin subsets of
$\cX\langle F\rangle$ is a thin subset. Moreover, there exists a
constant $C>0$ such that
\begin{align*}
\#\{x\in\cX\langle F\rangle\setminus T\mid H_{\cL,c}(x)\le B\} & \sim CB(\log B)^{b(\cL,c)-1}\quad(B\to\infty).
\end{align*}
\end{conjecture}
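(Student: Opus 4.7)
The plan is to split the conjecture into two parts: (i) the claim that the union $T$ of breaking thin subsets is itself thin, and (ii) the asymptotic formula $CB(\log B)^{b(\cL,c)-1}$ after removing $T$.

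For part (i), a priori $T$ is a union over potentially infinitely many breaking thin morphisms $f\colon\cY\to\cX$, so thinness is not automatic. The strategy, adapted from the variety-case work of Lehmann--Tanimoto--Sengupta, would be to show that breaking thin morphisms arise from only \emph{boundedly many} families $\cY_i\to\cX$, so that $T$ reduces to a finite union of images of thin morphisms and is thin by Definition \ref{def:thin-mor}. The inequality $(a(f^*\cL,f^*c),b(f^*\cL,f^*c))>(a(\cL,c),b(\cL,c))$ should constrain $\cY$ via its orbifold canonical class and the pulled-back pseudo-effective cone. One would first reduce to $\cY$ smooth and irreducible (via a stacky resolution and stratification), then invoke a boundedness statement analogous to BAB/Birkar in the orbifold setting to organize such $\cY$ into finitely many families. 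The extra ingredient beyond the variety case is that the raising function $f^*c$ and the twisted sectors of $\cY$ interact nontrivially with the pullback morphism $\cJ_0\cY\to\cJ_0\cX$, so one needs to control how sectors can split under $f$.

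For part (ii), the natural tool is the height zeta function
\[
Z(s):=\sum_{x\in\cX\langle F\rangle\setminus T}H_{\cL,c}(x)^{-s},
\]
which converges in a right half-plane by Proposition \ref{prop:Northcott}. One aims to prove meromorphic continuation of $Z(s)$ past $\Re s=1=a(\cL,c)$ with a pole of order exactly $b(\cL,c)$ at $s=1$, then apply a Tauberian theorem of Landau--Delange--Ikehara type. The zeta function should decompose according to residue patterns: after choosing a sector $\cY_v\in\pi_0(\cJ_0\cX)$ at each finite place $v$, the corresponding contribution is a local integral controlled by $\age_c(\cY_v)$ and the adelic metric on $\cL$; at the non-twisted sector it reduces essentially to the classical Batyrev--Manin zeta function for the coarse moduli space. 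Summing the resulting Euler product should yield an analytic behaviour governed precisely by the position of $[\cL,c]+[K_{\cX,\orb}]$ on the boundary of $\PEff_{\orb}(\cX)$, with the codimension of the minimal face producing the order of the pole, i.e. $b(\cL,c)$.

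The fundamental obstacle is that the classical Batyrev--Manin conjecture is itself open in enormous generality, and Conjecture \ref{conj:general} simultaneously specializes to it and to the Malle conjecture, which is known to have subtle counterexamples (here reinterpreted via breaking thin subsets). An unconditional proof in full generality would therefore require breakthroughs in both classical programs; the realistic route is to verify the conjecture in tractable cases (toric DM stacks, weighted projective stacks, products with classifying stacks of abelian groups, as in Example \ref{exa:product} and the companion paper), to check compatibility with known conjectures under specialization (as partly done here in Sections \ref{sec:Compatibility-with-products} and \ref{sec:compare-coarse}), and to use the formulation as a heuristic guide in analyzing specific examples such as Klüners' counterexample.
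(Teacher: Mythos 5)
This statement is a \emph{conjecture} in the paper, not a theorem; the paper contains no proof of it, and indeed Conjecture \ref{conj:general} generalizes simultaneously the Batyrev--Manin conjecture for varieties and the Malle conjecture, both of which are wide open. There is therefore no proof in the paper to compare your proposal against.

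Your proposal is honest in acknowledging this at the end: what you have written is a research program, not a proof. The two parts you isolate---thinness of the union $T$ of breaking thin subsets, and the asymptotic formula via height zeta functions and a Tauberian theorem---are the standard shape of attack on such problems, and you correctly cite the relevant analogues (Lehmann--Sengupta--Tanimoto for part (i) in the uniruled variety case, Euler-product and meromorphic-continuation arguments for part (ii)). The gaps you flag yourself are real and essential: a boundedness statement for breaking thin morphisms in the orbifold setting does not currently exist, the interaction of raising functions with $\cJ_0\cY\to\cJ_0\cX$ is genuinely novel, and the meromorphic continuation of the orbifold height zeta function with the pole order matching $b(\cL,c)$ is exactly the content of the conjecture, not a step toward it. Your proposal is therefore not a proof, nor can it be at present; the paper agrees, offering only evidence (compatibility with products in Section \ref{sec:Compatibility-with-products}, with coarse moduli spaces in Section \ref{sec:compare-coarse}, with the Malle conjecture, and verified special cases such as commutative $\B G$ in the companion paper, weighted projective stacks, and the reinterpretation of Klüners' counterexample) rather than a proof.
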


\begin{rem}
 In the conjecture, we impose the geometric rational connectedness on the coarse moduli space rather than the more restrictive (and more common) Fano condition, following the version of the Batyrev--Manin conjecture in \cite{Lehmann-Tanimoto-2019-RiMS}. 
 Since we would like to allow singular coarse moduli spaces, it would be more suitable to use a condition which is invariant under birational equivalences like the geometric rational connectedness.
\end{rem}

\begin{rem}
If $\cX$ is a smooth variety, then this conjecture is a version of
the Batyrev--Manin conjecture for a smooth variety and a big line
bundle $L$ with $a(L)=1$ such that the removed accumulation subset
is a thin subset. When the $a$-invariant is not equal to 1 but positive,
then we can reduce it to the case with $a=1$ by considering the $\RR$-line
bundle $L^{1/a(L)}$ and generalizing the above conjecture to $\RR$-line
bundles in the obvious way. 
\end{rem}

\begin{rem}
\label{rem:le Rudulier}One may be tempted to remove also\emph{ weakly}
breaking thin subsets. An evidence for this idea in the case of varieties
was provided by a work of Le Rudulier \cite{lerudulier2014pointsalgebriques}.
She showed that for some algebraic surface, the leading constant becomes
the same as the one conjectured by Peyre \cite{peyre1995hauteurs}
only after removing a weakly breaking thin subset. However, if we
remove all weakly breaking thin subsets, then it may happen that all
$F$-points are removed, and no point is left to count. See Remark
\ref{rem:remove-weak}. 
\end{rem}

\begin{rem}
Checking the first assertion of Conjecture \ref{conj:general}, that
the union of breaking thin subsets is a thin subset, is an interesting
problem on its own. When the target $\cX$ is a geometrically uniruled
variety, this problem was affirmatively solved by Lehmann--Sengupta--Tanimoto
\cite{lehmann2019geometric}. The problem has an affirmative answer
also when $\cX$ is $\B G$ for a commutative group scheme $G$ (see
Corollary \ref{cor:no-breaking}) as well as for some constant group
scheme $G$ (see Proposition \ref{prop:comprehensive}). 
\end{rem}

We also formulate a variant of Conjecture \ref{conj:general}, incorporating
the following notions.
\begin{defn}
A subset $U\subset\cX\langle F\rangle$ is said to be \emph{cothin
}if its complement $\cX\langle F\rangle\setminus U$ is thin. When
a raised line bundle $(\cL,c)$ on $\cX$ is fixed, we say that an
element of $\cX\langle F\rangle$ is \emph{secure }(resp.~\emph{strongly
secure}) if it is not contained in any breaking thin subset (resp.~any
weakly breaking thin subset) of $\cX\langle F\rangle$. We say that
a subset of $\cX\langle F\rangle$ is \emph{secure }(resp.~\emph{strongly
secure}) if it contains only secure (resp.~strongly secure) elements. 
\end{defn}

The following is the second version of the Batyrev--Manin conjecture
for DM stacks, which allows some freedom in the choice of the set
of counted $F$-points.
\begin{conjecture}[The stacky Batyrev--Manin conjecture II]
\label{conj:general-secure}Let $\cX$ be a nice stack over $F$ whose coarse moduli space is geometrically rationally connected.
Let $(\cL,c)$ be a raised line bundle which is big and adequate.
Suppose that $\cX\langle F\rangle$ is Zariski dense. Let $U\subset\cX\langle F\rangle$
be a secure cothin subset. Then, there exists a constant $C>0$ such
that
\begin{align*}
\#\{x\in U\mid H_{\cL,c}(x)\le B\} & \sim CB(\log B)^{b(\cL,c)-1}\quad(B\to\infty).
\end{align*}
\end{conjecture}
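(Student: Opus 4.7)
Since Conjecture \ref{conj:general-secure} subsumes both the Manin conjecture for Fano varieties and the Malle conjecture for classifying stacks, a complete proof is unattainable; my plan is rather to reduce it to Conjecture \ref{conj:general} and to indicate the tractable cases in which the latter can be attacked.

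The reduction runs as follows. Let $T \subset \cX\langle F\rangle$ be the union of all breaking thin subsets. Conjecture \ref{conj:general} asserts that $T$ is thin and that $V := \cX\langle F\rangle \setminus T$ satisfies an asymptotic $\sim C_0 B(\log B)^{b(\cL,c)-1}$. Given any secure cothin $U$, securedness forces $U \cap T = \emptyset$, so $U \subset V$, while cothinness makes $V \setminus U$ thin. Writing
\[
\#\{x \in U \mid H_{\cL,c}(x) \le B\} = \#\{x \in V \mid H_{\cL,c}(x) \le B\} - \#\{x \in V \setminus U \mid H_{\cL,c}(x) \le B\},
\]
I would decompose $V \setminus U$ as a finite union of images of non-breaking thin morphisms $f_i \colon \cY_i \to \cX$. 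Each contributes a term of order at most $B(\log B)^{b(\cL,c)-1}$—with matching order exactly when $f_i$ is weakly breaking—again by an inductive application of Conjecture \ref{conj:general} on $\cY_i$. Summing these contributions and subtracting yields the desired asymptotic for $U$ with some constant $C$.

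For unconditional work one should attack Conjecture \ref{conj:general} itself. First, for $\cX = \B G$ with $G$ commutative étale, the result announced in \cite{darda2022torsors} settles the matter via Fourier analysis on the idèle class group in the spirit of Wright and Wood. Second, the product compatibility of Section \ref{sec:Compatibility-with-products} and the coarse-moduli comparison of Section \ref{sec:compare-coarse} allow the transfer of known cases of the classical Manin conjecture (toric varieties \cite{batyrev1996heightzeta}, equivariant compactifications of algebraic groups) to the stacky setting when $c \equiv 0$. The principal obstacle is the inductive reduction, which requires a form of Conjecture \ref{conj:general} uniform across families of thin subquotients—strictly stronger than the polynomial Northcott bound of Proposition \ref{prop:Northcott}—together with sharp control of the weakly-but-not-strictly breaking thin maps highlighted in Remark \ref{rem:le Rudulier} so as to ensure that the resulting constant $C$ is positive; beyond this, the conjecture in its full generality is out of reach without substantive progress on both the Manin and Malle conjectures.
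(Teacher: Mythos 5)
This statement is labeled a \emph{conjecture}, so the paper itself supplies no proof; there is nothing in the text against which your argument could be matched. You correctly identify that a full proof is out of reach and instead offer a heuristic reduction to Conjecture \ref{conj:general}, which is a legitimate move, and the set-theoretic bookkeeping in the reduction is sound: a secure subset is, by definition, disjoint from the union $T$ of breaking thin subsets, and a cothin subset has thin complement, so $U\subset V:=\cX\langle F\rangle\setminus T$ with $V\setminus U$ thin, and since $V\setminus U$ avoids $T$ one may discard the breaking morphisms from any thin cover of it.

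There is, however, a genuine gap that prevents this from being a reduction in the strict sense, and it lies precisely where the two conjectures diverge. Your subtraction
\[
\#\{x\in U\mid H_{\cL,c}(x)\le B\}=\#\{x\in V\mid H_{\cL,c}(x)\le B\}-\#\{x\in V\setminus U\mid H_{\cL,c}(x)\le B\}
\]
gives an asymptotic for the left side only after two further things are established: (i) that the contribution of $V\setminus U$ itself has an asymptotic of the form $C' B(\log B)^{b(\cL,c)-1}$ (or smaller order), which requires not merely applying Conjecture \ref{conj:general} on each $\cY_i$ but also handling overlaps among the images $f_i(\cY_i\langle F\rangle)$ and the non-injectivity of $\cY_i\langle F\rangle\to\cX\langle F\rangle$ via some inclusion–exclusion; and, more seriously, (ii) that the resulting constant $C=C_0-C'$ is \emph{strictly positive}. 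As you yourself note by pointing to Remark \ref{rem:le Rudulier}, a weakly breaking but non-breaking thin morphism can contribute at exactly the main order $B(\log B)^{b(\cL,c)-1}$, so $C'$ need not be negligible; and nothing in your argument rules out $C'\ge C_0$. Ensuring $C>0$ for \emph{every} secure cothin $U$ is exactly the new content of Conjecture \ref{conj:general-secure} beyond Conjecture \ref{conj:general}, and your proposal names this difficulty without resolving it. So the reduction as written shows at best an upper bound of the right order and does not derive the conjecture.
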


\begin{rem}[Speculation on a formula for the leading constant $C$]
\label{rem:speculation-leading-C}In the situation of Conjecture
\ref{conj:general-secure}, we speculate that if $U$ is also \emph{strongly}
secure and if we use the height function $H_{\cL,c_{*}}$ for a strictly
raised line bundle $(\cL,c_{*})$ given with an adelic metric on $\cL$,
then the constant $C$ may admit an explicit expression similar to
ones of Peyre \cite{peyre1995hauteurs} and Bhargava \cite{bhargava2007massformulae}.
Note that there are nice stacks $\cX$ without any strongly secure
$F$-point. See Remark \ref{rem:Wood-fairness} and Section \ref{prop:comprehensive}
for related discussion in the case of zero-dimensional stacks. 
\end{rem}

\subsection{Fano stacks revisited}
\begin{prop}
Let $\cX$ be a Fano stack and let $c$ be a raising function of $\cX$
such that $(\omega_{\cX}^{-1},c)$ is adequate. Then, we have
\[
b(\omega_{\cX}^{-1},c)=\rho(\cX)+j_{c}(\cX).
\]
In particular, Conjecture \ref{conj:general} implies Conjecture \ref{conj:Fano-stack}. 
\end{prop}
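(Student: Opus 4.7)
The plan is to simplify the class $\xi := a(\omega_{\cX}^{-1},c)\,[\omega_{\cX}^{-1},c] + [K_{\cX,\orb}]$ using adequacy, and then identify its minimal face in $\PEff_{\orb}(\cX)$ as the simplicial cone spanned by the $c$-senior twisted sectors. By the proposition just before Definition \ref{def:adequacy-2} we have $a(\omega_{\cX}^{-1},c)=1$. Substituting the definitions, the line bundle contributions $[\omega_{\cX}^{-1}] + [\omega_{\cX}]$ cancel in $\N^1(\cX)_{\RR}$, yielding
\[
\xi = \sum_{\cY\in\pi_0^*(\cJ_0\cX)}(\age_c(\cY)-1)[\cY].
\]
Adequacy forces every coefficient nonnegative, with a coefficient equal to $0$ precisely for $c$-junior sectors. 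Hence $\xi$ is a strictly positive combination of the classes $[\cY]$ as $\cY$ ranges over $c$-senior twisted sectors.

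Next I would show that each $[\cY]$ with $\cY$ twisted is an extremal ray of $\PEff_{\orb}(\cX)$, and that $G := \sum_{\cY\text{ $c$-senior}}\RR_{\geq 0}[\cY]$ is itself a face of $\PEff_{\orb}(\cX)$. Both claims use the same decomposition argument: given $\eta_1+\eta_2\in G$ (respectively $=[\cY]$) with $\eta_i\in\PEff_{\orb}(\cX)$, write $\eta_i=(\eta_i)_0+\sum_{\cY'}(\eta_i)_{\cY'}[\cY']$; Lemma \ref{lem:peff-peff} gives $(\eta_i)_0\in\PEff(\cX)$; salience of the classical pseudo-effective cone of the projective coarse moduli space forces $(\eta_i)_0=0$; then Proposition \ref{prop:negative-coeff} gives $(\eta_i)_{\cY'}\ge 0$, and balancing coefficients component-wise forces the $\eta_i$ to stay in $G$ (respectively in $\RR_{\geq 0}[\cY]$).

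I would then conclude that $G$ is the minimal face of $\PEff_{\orb}(\cX)$ containing $\xi$: containment is clear, and minimality follows because any face containing $\xi$ must, by the face property applied to the expression of $\xi$ as a positive combination of the extremal rays $[\cY]$ for $c$-senior $\cY$, contain each such ray and hence all of $G$. Since $\dim G$ equals the number of $c$-senior twisted sectors, and $\dim\N_{\orb}^1(\cX)_{\RR}=\rho(\cX)+\#\pi_0^*(\cJ_0\cX)$, we get
\[
\codim G = \rho(\cX)+\bigl(\#\pi_0^*(\cJ_0\cX)-\#\{\cY:\age_c(\cY)>1\}\bigr) = \rho(\cX)+j_c(\cX),
\]
which is exactly $b(\omega_{\cX}^{-1},c)$.

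The second assertion, that Conjecture \ref{conj:general} implies Conjecture \ref{conj:Fano-stack}, now follows by applying Conjecture \ref{conj:general} to the raised line bundle $(\omega_{\cX}^{-1},c)$: this pair is adequate by hypothesis and big because ampleness of $\omega_{\cX}^{-1}$ on the coarse moduli space (Fano hypothesis) together with Corollary \ref{cor:PEff inclusions} place $[\omega_{\cX}^{-1},c]$ in the interior of $\PEff_{\orb}(\cX)$. The exponent $b(\omega_{\cX}^{-1},c)-1$ then matches $\rho(\cX)+j_c(\cX)-1$. The only subtle point in the argument is the face claim for $G$, which weaves together all three of Lemma \ref{lem:peff-peff}, salience of $\PEff(\cX)$, and Proposition \ref{prop:negative-coeff}; everything else is formal.
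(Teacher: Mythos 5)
Your proof is correct and follows essentially the same route as the paper: the paper also computes $\eta = \sum_{\cY}(\age_c(\cY)-1)[\cY]$ and then determines the minimal face via Lemma \ref{lem:peff-peff}, Proposition \ref{prop:negative-coeff}, and (implicitly) the salience of $\PEff(\overline{\cX})$, framed there as the observation that perturbing $\eta$ by $\pm\epsilon w$ stays inside $\PEff_{\orb}(\cX)$ exactly when $w$ lies in the span of the non-junior sector classes. Your version makes the face $G$ and its verification explicit rather than arguing through the perturbation subspace, but the ingredients and conclusion are identical.
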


\begin{proof}
We have
\begin{align*}
\eta & :=a(\omega_{\cX}^{-1},c)[\omega_{\cX}^{-1},c]+[K_{\cX,\orb}]\\
 & =[\omega_{\cX}^{-1},c]+[K_{\cX,\orb}]\\
 & =\sum_{\cY}(\age_{c}(\cY)-1)[\cY].
\end{align*}
For a vector 
\[
w\in\bigoplus_{\cY:\text{ not \ensuremath{c}-junior}}\RR[\cY]=:W
\]
and for $0<\epsilon\ll1$, $\eta\pm\epsilon w\in\PEff_{\orb}(\cX)$.
On the other hand, if $w$ is chosen outside $W$, then from Proposition
\ref{prop:negative-coeff} and Lemma \ref{lem:peff-peff}, either
of $\eta\pm\epsilon w$ is not contained in $\PEff_{\orb}(\cX)$.
This shows that the minimal face containing $\eta$ has dimension
equal to $\dim W$. We have proved the first assertion of the proposition.

To show the second assertion, we only need to check that the coarse moduli space $X$ of $\cX$ is geometrically rationally connected. Let $D$ be the branch divisor of the coarse moduli space morphism $\pi \colon \cX\to X$ given with standard $\QQ$-coefficients. Then, the $\QQ$-divisor $K_X+D$ corresponds to the canonical  line bundle $\omega_\cX$ of $\cX$ and the pair $(X,D)$ has only Kawamata log terminal singularities. From \cite[Th.~1]{Zhang-rationally-connected}, $X$ is geometrically rationally connected. 
\end{proof}
\begin{rem}
\label{rem:Gal-mod-jun-1}To give a conjectural expression of the
leading constant $C$ in the asymptotic formula for rational points
of a Fano variety $X$, Peyre \cite{peyre1995hauteurs} uses the three
ingredients; the volume of $\overline{X(F)}\subset X(\AA)$, the volume
of the intersection of $\PEff(X)$ and the hyperplane $(-K_{X},*)=1$,
and the $\Gamma_{F}$-module $\N^{1}(X_{\overline{F}})$. We may speculate
upon how to generalize it to stacks. For a nice stack $\cX$, as soon
as we find the correct topology and measure on $\cX\langle\AA\rangle$,
we can define the volume of $\overline{\cX\langle F\rangle}\subset X\langle\AA\rangle$.
The pseudo-effective cone should be replaced with the orbifold pseudo-effective
cone $\PEff_{\orb}(\cX)$. The volume of the hyperplane section $\PEff_{\orb}(\cX)\cap\{([K_{\cX,\orb}],-0)=1\}$
for an appropriate measure would be the counterpart of the second
ingredient. As for the $\Gamma_{F}$-module, we may consider 
\[
\N_{\orb,\cjun}^{1}(\cX_{\overline{F}})_{\RR}:=\N^{1}(\cX_{\overline{F}})_{\RR}\oplus\bigoplus_{\cY:\text{\ensuremath{c}-junior sector}}\RR[\cY].
\]
From Proposition \ref{prop:crep-jun}, if $\cX$ has trivial generic
stabilizer and if the raising function $c\equiv0$ is adequate, then
this module is isomorphic to the $\Gamma_{F}$-module $\N^{1}(X_{\overline{F}})_{\RR}\oplus\bigoplus_{E}\RR[E]$,
where $E$ runs over crepant divisors over the coarse moduli space
$X_{\overline{F}}$. Thanks to Proposition \ref{prop:crep-jun}, this
Galois module is an analogue of $\cL$-Picard group considered by
Batyrev and Tschinkel \cite{batyrev1998tamagawa}. We do not know
whether the literal translation of Peyre's constant by using the above
orbifold versions of ingredients gives the correct value of the leading
constant, however expect that the above speculation is headed in the
right direction.
\end{rem}

\subsection{Malle's conjecture revisited\label{subsec:Malle-revisited}}

In this subsection, we suppose that $\cX$ over $F$ has dimension
zero and has at least one $F$-point. From \cite[tag 06QK]{stacksprojectauthors2022stacksproject},
this is equivalent to saying that $\cX$ is a neutral gerbe over $F$.
From \cite[(3.21)]{laumon2000champsalgebriques}, for an $F$-point
$x\in\cX(F)$, there exists a canonical isomorphism $\cX\cong\B\ulAut_{F}(x)$.
In particular, $\cX$ is isomorphic to the classifying stack $\B G$
of a finite group scheme $G$ over $F$. Note that the isomorphism
class of such a group scheme $G$ is not generally uniquely determined
by $\cX$, as the next result shows.
\begin{lem}[{\cite[Prop.\ 2.2.3.6]{calm`es2015groupes}, \cite[p.\ 127]{emsalem2017twisting}}]
\label{lem:Aut-twist}Let $G$ be a finite group scheme over $F$,
let $x\in(\B G)(F)$ and let $x'\in(\B\ulAut(G^{\op}))(F)$ be the
$\ulAut(G^{\op})$-torsor derived from $x$ and the conjugation morphism
$G\to\ulAut(G)=\ulAut(G^{\op})$. Then, $\ulAut_{F}(x)$ is isomorphic
to the twisted form of $G^{\op}$ associated to $x'$.
\end{lem}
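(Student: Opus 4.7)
The plan is to exhibit a canonical isomorphism of $F$-group schemes between $\ulAut_{F}(x)$ and the twist of $G^{\op}$ by the $\ulAut(G^{\op})$-torsor $x'$, and then check it is an isomorphism by faithfully flat descent, reducing to the split case.

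First I would unwind the twist in terms of contracted products. Since $x'$ is defined as the push-forward of $x$ along the conjugation homomorphism $c\colon G\to\ulAut(G^{\op})$, we have $x'\cong x\times^{G}\ulAut(G^{\op})$. The twist of $G^{\op}$ by $x'$ is by definition the contracted product $x'\times^{\ulAut(G^{\op})}G^{\op}$, and associativity of contracted products gives a canonical isomorphism
\[
x'\times^{\ulAut(G^{\op})}G^{\op}\;\cong\;x\times^{G,c}G^{\op},
\]
where on the right $G$ acts on $G^{\op}$ via conjugation through $c$ (and on $x$ via the torsor structure).

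Next I would construct a natural morphism of $F$-group schemes
\[
\Phi\colon x\times^{G,c}G^{\op}\longrightarrow\ulAut_{F}(x).
\]
The torsor $x$ carries a canonical commuting right $G^{\op}$-action (étale-locally $x$ is identified with $G$ carrying both regular translations), so right-translation by $h\in G^{\op}$ gives an automorphism of $x$ as a left $G$-torsor. For a scheme-valued point $(p,h)$ of $x\times G^{\op}$, set $\Phi(p,h)$ to be this right-translation; one checks routinely that $\Phi$ is invariant under the diagonal $G$-action $g\cdot(p,h)=(gp,\,ghg^{-1})$, and therefore factors through the quotient $x\times^{G,c}G^{\op}$. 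It is moreover a homomorphism of group schemes since the group structure on the twist of $G^{\op}$ is induced from the (right) multiplication in $G^{\op}$.

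Finally, to see $\Phi$ is an isomorphism, pass to a finite étale cover $K/F$ over which $x$ becomes trivial. Choosing a trivialization $x_{K}\cong G_{K}$ identifies $\Phi_{K}$ with the classical isomorphism $G_{K}^{\op}\xrightarrow{\sim}\ulAut_{K}(G_{K})$ given by right translation. Hence $\Phi_{K}$ is an isomorphism, and by faithfully flat descent so is $\Phi$. The main obstacle is not conceptual but bookkeeping: one must keep the left/right and $G$/$G^{\op}$ conventions (for the torsor structure, the conjugation action, and the twist cocycle) consistent, so that the $G$-action on $G^{\op}$ on the right-hand side of the displayed isomorphism matches the conjugation action used to define $x'$; once this is pinned down, the rest of the verification is formal.
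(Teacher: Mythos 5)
The paper does not give an in-house proof of this lemma—it is a citation to Calm\`es--Fasel and Emsalem--Lombardi--Roque—so there is no paper argument to compare against. Your proof is the standard one and is correct: rewrite the twist as the contracted product $x\times^{G,c}G^{\op}$ via associativity of contracted products, define $\Phi$ by local right translation, verify invariance under the diagonal $G$-action so that $\Phi$ descends to the contracted product, observe it is a homomorphism because the group law on the twist is inherited from $G^{\op}$, and then reduce to the trivial case by descent (note $G$ is automatically \'etale since $F$ has characteristic zero, so $x$ trivializes over a finite \'etale extension). One mild imprecision worth flagging: a nontrivial left $G$-torsor $x$ does not carry a canonical \emph{global} right $G^{\op}$-action—right translation is only defined once a local point $p\in x$ is chosen—but this is not a gap, because your $\Phi$ explicitly takes $p$ as an input, and the diagonal $G$-invariance $(p,h)\sim(gp,\,ghg^{-1})$ that you check is precisely what glues the locally-defined right translations into a well-defined morphism out of $x\times^{G,c}G^{\op}$.
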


\begin{cor}
\label{cor:comm-aut}If $G$ is commutative, then for every $x\in(\B G)(F)$,
we have $\ulAut_{F}(x)\cong G$.
\end{cor}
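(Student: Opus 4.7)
The plan is to apply Lemma \ref{lem:Aut-twist} directly, exploiting the fact that commutativity forces the relevant torsor to be trivial. The key observation is that when $G$ is commutative, the conjugation homomorphism $G \to \ulAut(G^{\op})$ factors through the identity section, i.e., it is the trivial homomorphism, since conjugation by any element of an abelian group acts as the identity automorphism. Consequently, for every $x \in (\B G)(F)$, the associated $\ulAut(G^{\op})$-torsor $x'$ obtained by pushing $x$ forward along this conjugation morphism is (canonically) the trivial $\ulAut(G^{\op})$-torsor, independently of the choice of $x$.

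Once $x'$ is identified with the trivial torsor, the twisted form of $G^{\op}$ associated to $x'$ is just $G^{\op}$ itself, since twisting by a trivial torsor produces the original group scheme. Hence Lemma \ref{lem:Aut-twist} yields $\ulAut_F(x) \cong G^{\op}$. Finally, commutativity of $G$ gives $G^{\op} = G$, so $\ulAut_F(x) \cong G$, as claimed.

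There is essentially no obstacle here: the argument is entirely formal once Lemma \ref{lem:Aut-twist} is granted. The only point worth flagging is that, for non-commutative $G$, the isomorphism $\ulAut_F(x) \cong G^{\op}$ can genuinely fail (the twisted form depends on $x'$), whereas in the commutative case the triviality of inner automorphisms makes the conclusion uniform in $x$.
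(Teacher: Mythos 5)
Your proof is correct and follows essentially the same route as the paper: both invoke Lemma~\ref{lem:Aut-twist} and observe that commutativity of $G$ makes the conjugation map $G\to\ulAut(G^{\op})$ trivial, so the torsor $x'$ is trivial and the twist is $G^{\op}=G$. You have merely spelled out the intermediate steps more explicitly than the paper's one-line argument.
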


\begin{proof}
This follows from the last lemma and the fact that in the commutative
case, the conjugation map $G\to\ulAut(G)$ is the trivial map onto
the identity point. 
\end{proof}
Our principal interest is in the stack $\cX=\B G_{F}$, where $G$
is a finite group and $G_{F}$ is the corresponding constant group
scheme over $F$. In this case, from Example \ref{exa:BG tw sectors},
we may identify $\pi_{0}(\cJ_{0}\cX)$ with the set $\FConj(G)$ of
$F$-conjugacy classes of $G$. Thus, we may write
\begin{align*}
\N_{\orb}^{1}(\cX) & =\bigoplus_{[1]\ne[g]\in\FConj(G)}\RR[g].
\end{align*}

\begin{prop}
\label{prop:BG-Peff}Suppose that $\cX=\B G_{F}$ for a finite group
$G$. Suppose that $F$ contains $\#G$-th roots of unity. Then, 
\[
\PEff_{\orb}(\cX)=\sum_{[1]\ne[g]\in\Conj(G)}\RR_{\ge0}[g].
\]
\end{prop}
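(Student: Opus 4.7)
The plan is that this proposition is essentially a direct corollary of the two earlier general results, Corollary \ref{cor:PEff inclusions} and Proposition \ref{prop:negative-coeff}, once one unwinds what the ambient space $\N^1_{\orb}(\cX)_{\RR}$ looks like in this particular zero-dimensional setting.

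First I would identify the ambient space and the indexing set. Since $\cX = \B G_F$ has dimension zero, $\N^1(\cX)_{\RR} = 0$, so
\[
\N^1_{\orb}(\cX)_{\RR} = \bigoplus_{\cY \in \pi_0^*(\cJ_0 \cX)} \RR[\cY].
\]
Next I would use the hypothesis that $F$ contains all $\#G$-th roots of unity to match $\pi_0^*(\cJ_0 \cX)$ with $\Conj(G) \setminus \{[1]\}$. By Example \ref{exa:BG tw sectors}, $\pi_0(\cJ_0 \cX)$ is the set of $\Gamma_F$-orbits in $\Hom(\mu_e(\overline F), G(\overline F))/G(\overline F)$, where $e$ is the exponent of $G$. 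Since $F$ contains the $\#G$-th (hence $e$-th) roots of unity, $\Gamma_F$ acts trivially on $\mu_e(\overline F)$, and since $G$ is a constant group, it also acts trivially on $G(\overline F)$; thus every orbit is a singleton and $\pi_0(\cJ_0 \cX) = \FConj(G) = \Conj(G)$, with the non-twisted sector corresponding to $[1]$.

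For the inclusion $\supset$, I would invoke Corollary \ref{cor:PEff inclusions}: the trivial class $0$ lies in $\PEff(\cX)$, hence $\sum_{\cY} \RR_{\ge 0}[\cY] \subset \PEff_{\orb}(\cX)$, which translates via the identification above into $\sum_{[1]\ne [g]} \RR_{\ge 0}[g] \subset \PEff_{\orb}(\cX)$.

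For the inclusion $\subset$, the content is in Proposition \ref{prop:negative-coeff}. Given any $\theta = \sum_{[g]} \theta_{[g]}[g] \in \PEff_{\orb}(\cX)$, the fact that $\N^1(\cX)_{\RR} = 0$ puts $\theta$ in the intersection $\PEff_{\orb}(\cX) \cap \bigoplus_{\cY} \RR[\cY]$, which by that proposition equals $\sum_{\cY} \RR_{\ge 0}[\cY]$; hence all $\theta_{[g]} \ge 0$. Since all the real work has already been done in Proposition \ref{prop:choosing-sector} and Proposition \ref{prop:negative-coeff} (via stacky curves with prescribed ramification concentrated on a chosen sector), there is no serious obstacle here — the proof is just a matter of assembling these ingredients and observing that the $F$-rationality of the $\#G$-th roots of unity reduces $\FConj(G)$ to $\Conj(G)$.
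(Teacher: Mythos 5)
Your proof is correct, and it does take a genuinely different route from the paper's. You observe that $\dim\cX=0$ forces $\N^1(\cX)_\RR=0$, so that both Corollary \ref{cor:PEff inclusions} and Proposition \ref{prop:negative-coeff} (applied directly, since $\PEff(\cX)=\{0\}$) pin down $\PEff_{\orb}(\cX)$ exactly; the roots-of-unity hypothesis only serves to identify $\pi_0^*(\cJ_0\cX)$ with $\Conj(G)\setminus\{[1]\}$. That assembly is legitimate and nothing is circular: Proposition \ref{prop:negative-coeff} is proved earlier and in full generality via Proposition \ref{prop:choosing-sector} and Lemma \ref{lem:cover}. The paper instead gives a short self-contained construction: for a twisted sector $\cY_0$ corresponding to $\iota\colon\mu_l\hookrightarrow G$, it exhibits a cyclic cover $C\to\PP^1_{\overline F}$ branched at $l$ points so that $\cC=[C/\mu_l]\to\B G_{\overline F}$ is a stacky curve \emph{all} of whose stacky points land in $\cY_0$; then $(\cC,\theta)=b\,\theta_{\cY_0}$ with $b>0$, which gives negativity immediately without any asymptotic argument. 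This buys the authors a concrete, direct test object and a slightly stronger intermediate statement (ramification concentrated in a single sector, rather than merely dominated as in Proposition \ref{prop:choosing-sector}); your version is shorter but leans on the heavier general machinery. Both are valid proofs of the proposition.
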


\begin{proof}
We first note that from the assumption, we have 
\begin{align*}
\N_{\orb}^{1}(\cX) & =\bigoplus_{[1]\ne[g]\in\Conj(G)}\RR[g].
\end{align*}
We claim that for any twisted sector $\cY$ of $\cX_{\overline{F}}$,
there exists a stacky curve $f\colon\cC\to\cX_{\overline{F}}$ such
that for every stacky point $p\in\cC(\overline{F})$, the associated
sector $\cY_{p}$ is $\cY$. To show this, suppose that $\cY$ corresponds
to an injection $\iota\colon\mu_{l}\hookrightarrow G$. Let $f:=\prod_{i=0}^{l-1}(x-i)$
and let $L:=\overline{F}(x)(f^{1/l})$ be the $l$-cyclic extension
of $\overline{F}(x)$ associated to $f$. Let $C\to\PP_{\overline{F}}^{1}$
be the associated $\mu_{l}$-cover. This is ramified exactly at $x=0,1,\dots,l-1$.
Note that the cover is unramified over $\infty\in\PP_{\overline{F}}^{1}$,
since $f$ has degree $l$. The morphism $C\to\Spec\overline{F}$
is $\iota$-equivariant and induces 
\[
f\colon\cC:=[C/\mu_{l}]\to\cX_{\overline{F}}=[\Spec\overline{F}/G].
\]
From construction, stacky points of $\cC$ are the images of $0,1,\dots,l-1\in\PP_{\overline{F}}^{1}$.
Moreover, the twisted sector corresponding to each stacky point is
$\cY$. This proves the claim. 

For a twisted sector $\cY_{0}$, we choose $f_{0}\colon\cC\to\cX_{\overline{F}}$
as above. For $\theta=\sum\theta_{\cY}[\cY]\in\N_{\orb}^{1}(\cX)$,
\[
(f_{0},\theta)=b\theta_{\cY_{0}}\quad(b>0).
\]
Note that $f_{0}$ is a covering family of stacky curves of $\cX_{\overline{F}}$.
Thus, if $\theta\in\PEff_{\orb}(\cX)$, then $\theta_{\cY}\ge0$ for
every twisted sector $\cY$. The converse is easy to show. 
\end{proof}
\begin{cor}
\label{cor:PEff-zerodim}We have
\[
\PEff_{\orb}(\cX)=\sum_{\cY\in\pi_{0}^{*}(\cJ_{0}\cX)}\RR_{\ge0}[\cY].
\]
In particular, if $\cX=\B G_{F}$ for a finite group $G$, then
\[
\PEff_{\orb}(\cX)=\sum_{[1]\ne[g]\in\FConj(G)}\RR_{\ge0}[g].
\]
\end{cor}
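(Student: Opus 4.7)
The argument is essentially immediate once one combines the dimension hypothesis with Proposition \ref{prop:negative-coeff}. The first step is to verify that $\N^1(\cX)_\RR = 0$ under the hypothesis $\dim\cX = 0$. Since $\cX$ is nice, it is geometrically irreducible, so its coarse moduli space $\overline{\cX}$ is the spectrum of a finite field extension of $F$, whose Picard group vanishes. The isomorphism $\Pic(\cX)_\QQ \cong \Pic(\overline{\cX})_\QQ$ from Proposition \ref{prop:iso-Pic} then forces $\Pic(\cX)_\QQ = 0$, and hence $\N^1(\cX)_\RR = 0$. Consequently, the orbifold Néron--Severi space reduces to
\[
\N_\orb^1(\cX)_\RR = \bigoplus_{\cY \in \pi_0^*(\cJ_0\cX)} \RR[\cY].
\]

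The second step is then trivial: intersecting $\PEff_\orb(\cX)$ with the whole ambient space and applying Proposition \ref{prop:negative-coeff} gives
\[
\PEff_\orb(\cX) \;=\; \PEff_\orb(\cX) \cap \bigoplus_{\cY} \RR[\cY] \;=\; \sum_{\cY \in \pi_0^*(\cJ_0\cX)} \RR_{\ge 0}[\cY],
\]
which is the first assertion of the corollary.

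For the \emph{in particular} clause, I would simply invoke Example \ref{exa:BG tw sectors}, which identifies $\pi_0^*(\cJ_0(\B G_F))$ with $\FConj(G)\setminus\{[1]\}$ when $G$ is a finite (constant) group, and transport the general equality through this identification to obtain
\[
\PEff_\orb(\B G_F) = \sum_{[1] \ne [g] \in \FConj(G)} \RR_{\ge 0}[g].
\]
There is no real obstacle here; all the geometric content is packaged inside Proposition \ref{prop:negative-coeff} (which in turn relied on the curve-construction in Proposition \ref{prop:choosing-sector}), and the role of the present statement is just to record that in the zero-dimensional case no ``divisorial'' directions complicate the picture, so the orbifold pseudo-effective cone is precisely the positive orthant spanned by the twisted sectors.
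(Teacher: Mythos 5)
Your proof is correct, but it follows a genuinely different route from the paper's. The paper proves the corollary as a descent argument from Proposition \ref{prop:BG-Peff}: it first passes to a finite Galois extension $K/F$ over which $\cX_K \cong \B G_K$ for a constant finite group $G$ with $K$ absorbing all $\#G$-th roots of unity, applies the hands-on construction of Proposition \ref{prop:BG-Peff} over $K$, and then takes $\Gal(K/F)$-invariants, using the fact that the cone $\sum_{\cY'} \RR_{\ge 0}[\cY']$ intersected with the invariant subspace is generated by Galois-orbit sums. You instead bypass Proposition \ref{prop:BG-Peff} entirely: you observe that $\dim\cX = 0$ forces $\overline{\cX} \cong \Spec F$ (geometric irreducibility gives this on the nose, so ``a finite field extension'' in your first step is a slight overstatement — it is $F$ itself, though of course $\Pic$ vanishes in either case), hence $\N^1(\cX)_\RR = 0$ and $\N^1_\orb(\cX)_\RR = \bigoplus_\cY \RR[\cY]$, so the general Proposition \ref{prop:negative-coeff} already computes $\PEff_\orb(\cX)$ in full. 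This is shorter and avoids descent, at the cost of leaning on the uniform validity of Proposition \ref{prop:choosing-sector}/\ref{prop:negative-coeff} for zero-dimensional nice stacks (which does hold, since the covering-family construction works unchanged when $\cX_{\overline{F}} = \B G_{\overline{F}}$). The paper's route via Proposition \ref{prop:BG-Peff} gives a self-contained, explicit picture of the cone for $\B G$ with enough roots of unity, which has some independent expository value; your argument trades that concreteness for brevity by reusing the abstract Section~8 machinery. Both are sound.
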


\begin{proof}
There exists a finite Galois extension $K/F$ such that $\cX_{K}\cong\B G_{K}$
for a finite group $G$ and $K$ contains $\#G$-th roots of unity.
From the last proposition, 
\[
\PEff_{\orb}(\cX_{K})=\sum_{\cY'\in\pi_{0}^{*}(\cJ_{0}\cX_{K})}\RR_{\ge0}[\cY'],
\]
We have an isomorphism $\N_{\orb}^{1}(\cX)\cong(\N_{\orb}^{1}(\cX_{K}))^{\Gal(K/F)}$
such that for a sector $\cY$ of $\cX$, the class $[\cY]$ corresponds
to the sum $\sum_{i=1}^{m}[\cY_{i}']$, where $\{\cY_{1}',\dots,\cY_{m}'\}\subset\pi_{0}(\cJ_{0}\cX_{K})$
is the Galois orbit corresponding to $\cY$. By the isomorphism, $\PEff_{\orb}(\cX)$
corresponds to 
\[
\PEff_{\orb}(\cX_{K})\cap(\N_{\orb}^{1}(\cX_{K}))^{\Gal(K/F)}.
\]
The last cone is generated by such sums $\sum_{i=1}^{m}[\cY_{i}']$
as above. This shows the corollary.
\end{proof}
As in Section \ref{sec:Fano-stacks}, we only consider the structure
sheaf $\cO$ as a line bundle on $\cX$. Corollary \ref{cor:PEff-zerodim}
implies:
\begin{cor}
A raised line bundle $(\cO,c)$ on $\cX$ is big if and only if $c$
is positive. 
\end{cor}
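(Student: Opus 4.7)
The plan is to unwind the definitions using Corollary \ref{cor:PEff-zerodim}, which already pins down the shape of $\PEff_{\orb}(\cX)$ in the zero-dimensional case. Since $\dim\cX=0$, the coarse moduli space $\overline{\cX}$ is also zero-dimensional, so $\Pic(\overline{\cX})_{\QQ}=0$ and hence, via the isomorphism of Proposition \ref{prop:iso-Pic}, $\N^{1}(\cX)_{\RR}=0$. Consequently the orbifold N\'eron--Severi space reduces to
\[
\N_{\orb}^{1}(\cX)_{\RR}=\bigoplus_{\cY\in\pi_{0}^{*}(\cJ_{0}\cX)}\RR[\cY],
\]
and the class of the raised line bundle $(\cO,c)$ becomes simply
\[
[\cO,c]=\sum_{\cY\in\pi_{0}^{*}(\cJ_{0}\cX)}c(\cY)[\cY],
\]
since $[\cO]=0$ in $\N^{1}(\cX)_{\RR}$.

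Next, by Corollary \ref{cor:PEff-zerodim}, the orbifold pseudo-effective cone is the non-negative orthant
\[
\PEff_{\orb}(\cX)=\sum_{\cY\in\pi_{0}^{*}(\cJ_{0}\cX)}\RR_{\ge0}[\cY]
\]
relative to the basis $\{[\cY]\}_{\cY\in\pi_{0}^{*}(\cJ_{0}\cX)}$ of $\N_{\orb}^{1}(\cX)_{\RR}$. This is a full-dimensional simplicial cone, whose topological interior in $\N_{\orb}^{1}(\cX)_{\RR}$ is precisely the open orthant
\[
\sum_{\cY\in\pi_{0}^{*}(\cJ_{0}\cX)}\RR_{>0}[\cY].
\]

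Combining these two observations: $(\cO,c)$ is big, meaning $[\cO,c]$ lies in the interior of $\PEff_{\orb}(\cX)$, if and only if every coefficient $c(\cY)$ with $\cY$ twisted is strictly positive, which is by definition the positivity of $c$. There is no real obstacle here; the only sanity check is that $\N^{1}(\cX)_{\RR}=0$ in dimension zero, and both directions of the equivalence then reduce to inspecting coordinates in a preferred basis.
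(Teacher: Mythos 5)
Your proof is correct and matches the paper's (implicit) argument exactly: the paper states this corollary immediately after Corollary \ref{cor:PEff-zerodim} without further proof, and the intended reasoning is precisely that $\N^{1}(\cX)_{\RR}=0$ in dimension zero, so $\N_{\orb}^{1}(\cX)_{\RR}$ is the free space on twisted sectors, the orbifold pseudo-effective cone is the nonnegative orthant, and $[\cO,c]$ lies in its interior if and only if every coefficient $c(\cY)$ is strictly positive.
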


\begin{prop}
\label{prop:Malle-1}For a positive raising function $c$ of $\cX$,
we have that
\begin{align*}
a(\cO,c) & =\max\left\{ c(\cY)^{-1}\mid\cY\in\pi_{0}^{*}(\cJ_{0}\cX)\right\} \\
 & =\left(\min\left\{ c(\cY)\mid\cY\in\pi_{0}^{*}(\cJ_{0}\cX)\right\} \right)^{-1}
\end{align*}
and 
\[
b(\cO,c)=\sharp\{\cY\in\pi_{0}^{*}(\cJ_{0}\cX)\mid c(\cY)=a(\cO,c)^{-1}\}.
\]
\end{prop}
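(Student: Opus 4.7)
The plan is a direct calculation inside the simplicial cone description of $\PEff_{\orb}(\cX)$ that becomes available in dimension zero. First I would record the following three observations: since $\dim\cX=0$, the space $\N^1(\cX)_{\RR}$ vanishes (there are no one-cycles to test numerical equivalence against), so $[\cO]=0$ and $[\omega_{\cX}]=0$; the tangent bundle $\T\cX$ has rank zero, so $\age(\cY)=0$ for every sector $\cY$; and by Corollary \ref{cor:PEff-zerodim}, $\PEff_{\orb}(\cX)=\sum_{\cY\in\pi_0^*(\cJ_0\cX)}\RR_{\ge 0}[\cY]$, where the $[\cY]$ form a basis of $\N_{\orb}^1(\cX)_{\RR}$. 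Consequently $[K_{\cX,\orb}]=-\sum_{\cY}[\cY]$ and $[\cO,c]=\sum_{\cY}c(\cY)[\cY]$, so
\[
a[\cO,c]+[K_{\cX,\orb}]=\sum_{\cY}\bigl(a\cdot c(\cY)-1\bigr)[\cY]
\]
for every $a\in\RR$.

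For the formula for $a(\cO,c)$, since the $[\cY]$ form a basis of $\N_{\orb}^1(\cX)_{\RR}$, the displayed vector lies in $\PEff_{\orb}(\cX)$ iff every coefficient is non-negative, and on its boundary iff in addition some coefficient vanishes. These conditions translate to $a\ge c(\cY)^{-1}$ for all $\cY$ with equality for at least one $\cY$, yielding $a(\cO,c)=\max_{\cY}c(\cY)^{-1}=\bigl(\min_{\cY}c(\cY)\bigr)^{-1}$.

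For $b(\cO,c)$, the cone $\sum_{\cY}\RR_{\ge 0}[\cY]$ is simplicial with faces indexed by subsets $S\subset\pi_0^*(\cJ_0\cX)$. The minimal face containing $a(\cO,c)[\cO,c]+[K_{\cX,\orb}]$ is thus the one indexed by $S=\{\cY:a(\cO,c)\cdot c(\cY)>1\}$, of dimension $\#S$. Combining the ambient dimension $\#\pi_0^*(\cJ_0\cX)$ with the inequality $a(\cO,c)\cdot c(\cY)\ge 1$ just established, the codimension is exactly $\#\{\cY:a(\cO,c)\cdot c(\cY)=1\}=\#\{\cY:c(\cY)=a(\cO,c)^{-1}\}$. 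There is no serious obstacle: once the simplicial description of $\PEff_{\orb}(\cX)$ and the vanishings of $[\omega_{\cX}]$ and of all ages are in hand, the argument is a straightforward combinatorial check in a basis.
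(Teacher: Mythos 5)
Your proposal is correct and follows essentially the same route as the paper: expand $a[\cO,c]+[K_{\cX,\orb}]$ in the $\{[\cY]\}$ basis and read off positivity and face structure from the simplicial cone $\PEff_{\orb}(\cX)=\sum_{\cY}\RR_{\ge0}[\cY]$. The only difference is that you spell out explicitly why $[\cO]$, $[\omega_{\cX}]$ and all ages vanish in dimension zero before writing the equality $a[\cO,c]+[K_{\cX,\orb}]=\sum_{\cY}(a\,c(\cY)-1)[\cY]$, whereas the paper states this formula directly and leaves those verifications implicit.
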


\begin{proof}
For a real number $a\in\RR$, we have
\[
v_{a}:=a[\cO,c]+[K_{\cX,\orb}]=\sum_{\cY\in\pi_{0}^{*}(\cJ_{0}\cX)}(a\cdot c(\cY)-1)[\cY].
\]
Therefore, $v_{a}\in\PEff_{\orb}(\cX)$ if and only if $a\cdot c(\cY)-1\ge0$
for every twisted sector $\cY$. The minimum real number $a$ satisfying
the last condition is exactly the value of $a(\cO,c)$ stated in the
proposition. 

The minimal face of $\PEff_{\orb}(\cX)$ containing $a(\cO,c)[\cO,c]+[K_{\cX,\orb}]$
is the cone generated by the classes $[\cY]$ of those twisted sectors
that \emph{do not} have the minimal $c$-value. Thus, the codimension
of this face is equal to the number of those twisted sectors that
\emph{do} have the minimal $c$-value. The desired formula for the
$b$-invariant follows.
\end{proof}
\begin{rem}[Malle's conjecture revisited]
\label{rem:Malle-revisited}If $\cX=\B G_{F}$ for a finite group
$G$, then $\pi_{0}(\cJ_{0}\cX)=\FConj(G)$ as was already mentioned.
In this situation, Conjecture \ref{conj:general} with $a$- and $b$-invariants
computed in Proposition \ref{prop:Malle-1} is the same as the version
of Malle's conjecture for the generalized discriminant (see Example
\ref{exa:discriminant}) associated to the given raising function
$c$, except that we count $G$-torsors over $F$ giving points of
$\cX\langle F\rangle\setminus T$, while the original conjecture of Malle 
concerns $G$-fields over $F$ (corresponding to connected $G$-torsors
over $F$). Counting only $G$-fields amounts to removing the thin
subset $T'$ given by
\[
T'=\bigcup_{H\subsetneq G}((\B H_{F})\langle F\rangle\to(\B G_{F})\langle F\rangle).
\]
Points coming from breaking thin morphisms $\cY\to\B G_{F}$ (see
Section \ref{sub:Kl=0000FCners'-counterexample} for an example) is
expected to give an asymptotic larger than the expectation of 
Malle's conjecture and should be removed. The same remark shows that
$T$ may include some $G$-fields. 
\end{rem}

\begin{rem}
\label{rem:thin-BH}Let $f\colon\cY\to\cX$ be a thin morphism with
$\cY\langle F\rangle\ne\emptyset$ and let $y\in\cY(F)$. Then, we
have the injection $\ulAut_{F}(y)\to\ulAut_{F}(f(y))$ of group schemes
over $F$. Through the canonical isomorphisms $\cY\cong\B\ulAut_{F}(y)$
and $\cX\cong\B\ulAut_{F}(f(y))$, the morphism $f\colon\cY\to\cX$
is identified with the natural morphism $\B\ulAut_{F}(y)\to\B\ulAut_{F}(f(y))$.
\end{rem}

The following proposition enables us to check the secureness of each
point $x\in\cX\langle F\rangle$ by looking only at (necessarily finitely
many) subgroup schemes of $\ulAut_{F}(x)$. 
\begin{prop}
\label{prop:elementwise-criterion}A point $x\in\cX\langle F\rangle$
is secure (resp.~strongly secure) if and only if for any proper subgroup
scheme $H$ of $\ulAut_{F}(x)$, the induced morphism $\B H\to\B\ulAut_{F}(x)\cong\cX$
is not breaking thin morphism (resp.~weakly breaking thin morphism). 
\end{prop}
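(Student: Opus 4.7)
The plan is to reduce the statement to a direct translation between thin morphisms $\cY \to \cX$ with $\cY\langle F\rangle \ne \emptyset$ and proper subgroup schemes of $\ulAut_F(x)$, using Remark \ref{rem:thin-BH} together with the canonical identification $\cX \cong \B\ulAut_F(x)$ furnished by the $F$-point $x$. Once this translation is set up, both implications become essentially tautological.

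For the reverse implication, I would argue contrapositively. Suppose $H \subsetneq \ulAut_F(x)$ is a proper subgroup scheme and $g\colon \B H \to \B\ulAut_F(x) \cong \cX$ is a breaking thin morphism. The trivial $H$-torsor gives an $F$-point of $\B H$, and the induction functor sends it to the trivial $\ulAut_F(x)$-torsor; through the chosen isomorphism $\cX \cong \B\ulAut_F(x)$, the latter corresponds precisely to the class of $x$. Hence $x$ lies in the non-empty image of $\B H\langle F\rangle \to \cX\langle F\rangle$, which is therefore a breaking thin subset containing $x$, and $x$ is not secure.

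For the forward direction, assume $x$ is not secure: there exists a breaking thin morphism $f\colon \cY \to \cX$ and an $F$-point $y \in \cY(F)$ with $[f(y)] = [x]$ in $\cX\langle F\rangle$. By Remark \ref{rem:thin-BH}, the canonical isomorphisms $\cY \cong \B\ulAut_F(y)$ and $\cX \cong \B\ulAut_F(f(y))$ identify $f$ with the morphism induced by the inclusion $\ulAut_F(y) \hookrightarrow \ulAut_F(f(y))$. Choosing an isomorphism $f(y) \xrightarrow{\sim} x$ in $\cX(F)$ yields a group isomorphism $\ulAut_F(f(y)) \cong \ulAut_F(x)$, realizing $H := \ulAut_F(y)$ as a subgroup scheme of $\ulAut_F(x)$. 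The subgroup is proper, because otherwise $\B\ulAut_F(y) \to \B\ulAut_F(f(y))$, and hence $f$, would be an isomorphism, contradicting the non-birationality built into the definition of a thin morphism. The resulting morphism $\B H \to \cX$ differs from $f$ only by post-composition with an automorphism of $\cX$, so the pullbacks of $\cL$ and $c$ are identified under this composition and the $a$- and $b$-invariants agree. Thus $\B H \to \cX$ is itself a breaking thin morphism.

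The strongly secure / weakly breaking variant follows by the identical argument with the weak inequality replacing the strict one throughout. The only mildly non-trivial step is the preservation of the numerical data $(a,b)$ when passing from $f$ to $\B H \to \cX$ via canonical isomorphisms and a chosen isomorphism $f(y) \cong x$; this is immediate from the functoriality of pullback of line bundles and raising functions along isomorphisms of nice stacks.
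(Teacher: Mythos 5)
Your proof is correct and follows essentially the same route as the paper's: both implications are handled by the translation furnished by Remark \ref{rem:thin-BH} and the canonical identification $\cX \cong \B\ulAut_F(x)$. You spell out a few details the paper leaves tacit (properness of $H$ via non-birationality, invariance of $(a,b)$ under post-composition with an automorphism of $\cX$), but the underlying argument is the same.
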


\begin{proof}
This result also appears in \cite{darda2022torsors}, but stated and
proved in a slightly different language. For the sake of completeness,
we give a proof here by using the language of the present article.
If there exists a proper subgroup scheme $H\subsetneq\ulAut_{F}(x)$
such that $\B H\to\B\ulAut_{F}(x)\cong\cX$ is a breaking thin morphism,
then by definition, $x$ is not secure. Conversely, if $x$ is not
secure, then there exist a breaking thin morphism $f\colon\cY\to\cX$
and an $F$-point $y\in\cY\langle F\rangle$ with $f(y)=x$. From
Remark \ref{rem:thin-BH}, $f$ is identified with $\B H\to\B\ulAut_{F}(x)$
for some proper subgroup scheme $H\subsetneq\ulAut_{F}(x)$. We have
proved the assertion about the condition for the secureness. The one
for the strong secureness is similarly proved.
\end{proof}
\begin{cor}
\label{cor:no-breaking}If $G$ is commutative, then $(\B G_{F})\langle F\rangle$
has no breaking thin subset. Namely, $(\B G_{F})\langle F\rangle$
is a secure cothin subset of itself.
\end{cor}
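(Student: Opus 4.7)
The plan is to combine the elementwise criterion of Proposition \ref{prop:elementwise-criterion} with the identification $\ulAut_F(x) \cong G$ supplied by Corollary \ref{cor:comm-aut}, and then verify that along any subgroup inclusion $H \subsetneq G$ the $a$- and $b$-invariants cannot strictly increase. Fix $x \in (\B G_F)\langle F\rangle$. Corollary \ref{cor:comm-aut} identifies $\ulAut_F(x) \cong G$ as group schemes over $F$, so by Proposition \ref{prop:elementwise-criterion} it suffices to show that for every proper subgroup scheme $H \subsetneq G$, the morphism $f\colon \B H \to \B G_F$ is not a breaking thin morphism. When $H = 1$, the source $\Spec F$ is not a nice stack and $f$ is not thin, so we may assume $H \ne 1$.

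The key geometric input is that $f$ induces an injection on sectors. By Example \ref{exa:BG tw sectors}, since $G$ is commutative the conjugation action is trivial, so sectors of $\B G_F$ (respectively $\B H$) correspond to $\Gamma_F$-orbits on $\Hom(\widehat{\mu}(\overline{F}), G(\overline{F}))$ (respectively $\Hom(\widehat{\mu}(\overline{F}), H(\overline{F}))$). The inclusion $H(\overline{F}) \subset G(\overline{F})$ is $\Gamma_F$-stable (since $H$ is defined over $F$), so the induced map on Hom-sets is $\Gamma_F$-equivariant and remains injective on $\Gamma_F$-orbits: if $\sigma \cdot \phi_1 = \phi_2$ holds inside $\Hom(\widehat{\mu}(\overline{F}), G(\overline{F}))$ with both $\phi_i$ factoring through $H(\overline{F})$, then the same equality holds in $\Hom(\widehat{\mu}(\overline{F}), H(\overline{F}))$. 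The non-twisted sector maps to the non-twisted sector, twisted sectors map to twisted sectors, and $f^{*}c$ is the restriction of $c$ along this injection.

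Since both stacks have dimension zero, $f^{*}\cO = \cO$, and Proposition \ref{prop:Malle-1} computes the $a$- and $b$-invariants purely from the multiset of values of the raising function on twisted sectors. Writing $m_G := \min_{\cY} c(\cY)$ and $m_H := \min_{\cZ} f^{*}c(\cZ)$ over twisted sectors of $\B G_F$ and $\B H$ respectively, the sector injection forces $m_H \ge m_G$, so
\[
a(\cO, f^{*}c) = m_H^{-1} \le m_G^{-1} = a(\cO, c).
\]
When equality holds, the twisted sectors of $\B H$ with $f^{*}c(\cZ) = m_H$ inject into the twisted sectors of $\B G_F$ with $c(\cY) = m_G$, yielding $b(\cO, f^{*}c) \le b(\cO, c)$. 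Therefore $(a(\cO, f^{*}c), b(\cO, f^{*}c)) \le (a(\cO, c), b(\cO, c))$ in the lexicographic order, which prevents $f$ from being breaking thin (and if $(f^{*}\cO, f^{*}c)$ fails to be big, $f$ is not breaking thin by definition either). The conclusion is then that every $x$ is secure, proving the corollary.

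The only step requiring genuine care is the injectivity of the sector map on $\Gamma_F$-orbits when $G$ is non-constant; all other ingredients---triviality of conjugation thanks to commutativity, zero-dimensionality to let Proposition \ref{prop:Malle-1} control the invariants, and the straightforward comparison of minima and their multiplicities---feed into the conclusion immediately.
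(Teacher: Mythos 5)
Your proof is correct and follows essentially the same route as the paper: reduce via Corollary \ref{cor:comm-aut} (and Remark \ref{rem:thin-BH}, equivalently Proposition \ref{prop:elementwise-criterion}) to morphisms $\B H\to\B G$ for proper subgroup schemes $H\subsetneq G$, then observe that commutativity makes the induced map $\pi_{0}(\cJ_{0}(\B H))\to\pi_{0}(\cJ_{0}(\B G))$ injective. The only difference is that you spell out, via Proposition \ref{prop:Malle-1}, the final inference that injectivity of the sector map forces $(a,b)$ not to increase in lexicographic order, whereas the paper leaves that step implicit.
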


\begin{proof}
From Corollary \ref{cor:comm-aut} and Remark \ref{rem:thin-BH},
every thin morphism $\cY\to\cX$ with $\cY\langle F\rangle\ne\emptyset$
is of the form $\B H\to\B G$ for a proper subgroup scheme $H$ of
$G$. From the commutativity and from Example \ref{exa:BG tw sectors},
the map $\pi_{0}(\cJ_{0}(\B H))\to\pi_{0}(\cJ_{0}(\B G))$ is identified
with 
\[
\Hom(\widehat{\mu}(\overline{F}),H(\overline{F}))/\Gamma_{F}\to\Hom(\widehat{\mu}(\overline{F}),G(\overline{F}))/\Gamma_{F}.
\]
This is injective, which implies that $\cY\to\cX$ is not a breaking
thin morphism.
\end{proof}
\begin{rem}
Suppose that $\cX=\B G$ with $G$ a commutative finite group scheme
over $F$. From Corollary \ref{cor:no-breaking}, the subset $T$
in Conjecture \ref{conj:general} is empty. In this case, Conjecture
\ref{conj:general} holds (see \cite{darda2022torsors} and references
therein).
\end{rem}

\begin{example}
\label{exa:weakly-breaking}Let $p$ be a prime number and let $\cX=\B\mu_{p^{2},F}$.
We can identify $\pi_{0}(\cJ_{0}\cX)$ with $\ZZ/p^{2}\ZZ$. For $0<c_{1}<c_{2}$,
we define a raising function $c$ by
\[
c(i)=\begin{cases}
0 & (i=0)\\
c_{1} & (i\in p\ZZ/p^{2}\ZZ)\\
c_{2} & (\text{otherwise})
\end{cases}.
\]
Then, the natural morphism $\B\mu_{p,F}\to\cX$ is weakly breaking
thin with respect to $c$.
\end{example}

\begin{rem}
\label{rem:Wood-fairness}For an abelian group $G$, there may exist
a \emph{weakly} breaking thin morphism $\cY\to\B G_{F}$. Wood \cite{wood2010onthe}
introduces the notion of fairness to have a nice leading constant
for counting abelian extensions of a number field. If we adapt this
notion into our language and if a raising function $c$ of $\B G$
for an abelian finite group $G$ is fair, then there is no weakly
breaking thin morphism $\cY\to\B G$ with $\cY\langle F\rangle\ne\emptyset$. 
\end{rem}

\begin{rem}
\label{rem:remove-weak}Let $\cX=\B G$ with $G$ a commutative finite
group scheme over $F$. Then, for any two points $x,x'\in\cX\langle F\rangle$,
there exists an automorphism $\cX\xrightarrow{\sim}\cX$ mapping $x$
to $x'$. It follows that if $x$ is in the image of the map $\cY\langle F\rangle\to\cX\langle F\rangle$
associated to a morphism $f\colon\cY\to\cX$ of stacks, then $x'$
is in the image of the map $\cY\langle F\rangle\to\cX\langle F\rangle$
associated to the composite map $\cY\xrightarrow{f}\cX\xrightarrow{g}\cX$
of $f$ and some automorphism $g$. Thus, if $c$ is a raising function
of $\cX$ preserved by automorphisms of $\cX$ and if $\cX$ contains
a weakly breaking thin subset (see Example \ref{exa:weakly-breaking}),
then $\cX\langle F\rangle$ is covered by weakly breaking thin subsets.
In particular, $\cX\langle F\rangle$ has no strongly secure element.
Note that this pathology does not occur for breaking thin morphisms,
since there is no breaking thin morphism to $\cX$ at all.
\end{rem}

\subsection{Klüners' counterexample revisited\label{sub:Kl=0000FCners'-counterexample}}

In this subsection, we explain Klüners' counterexample to Malle's
conjecture \cite{kluners2005acounter} in our language. Consider the
wreath product 
\[
G:=C_{3}\wr C_{2}=(C_{3}\times C_{3})\rtimes C_{2}.
\]
Here $C_{n}$ denotes the cyclic group of order $n$. This is a group
of order 18 and realized as the transitive subgroup of $S_{6}$ generated
by permutations $(1,2,3)$, $(4,5,6)$, and $(1,4)(2,5)(3,6)$. Recall
that the index function 
\[
\ind\colon G\to\ZZ_{\ge0}
\]
is defined by 
\[
\ind(g):=6-\#\{g\text{-orbits in }\{1,2,\dots,6\}\}.
\]
This function restricted to $G\setminus\{1\}$ takes the minimal value
$2$ exactly on the following four elements 
\begin{equation}
(1,2,3),\,(1,3,2),\,(4,5,6),\,(4,6,5),\label{eq:ind-2}
\end{equation}
all of which have order 3. Note that there are also elements of order
3 having index 4, for example, $(1,2,3)(4,5,6)$. 

Let us now consider the stack $\cX:=\B G_{\QQ}$. Its set of sectors,
$\pi_{0}(\cJ_{0}\cX)$, is identified with the set of $\QQ$-conjugacy
classes, $\QConj(G)$. The four elements of index 2 are divided into
two conjugacy classes $\{(1,2,3),(4,5,6)\}$ and $\{(1,3,2),(4,6,5)\}$.
In turn, these two conjugacy classes form one $\QQ$-conjugacy class.
In summary, there is the unique $\QQ$-conjugacy class of index 2. 

We now discuss possible forms of breaking thin morphisms $\cY\to\cX$
with $\cY\langle F\rangle\ne\emptyset$. The stack $\cY$ needs to
be isomorphic to $\B H'$ where $H'$ is a twisted form of a subgroup
$H\nsubseteq G$ which contains an element of index 2. We can easily
see that any subgroup of order 2 or 6 does not contain an element
of index 2. Thus, the order of $H$ is either 3 or 9. It follows that
$H$ is contained in the unique $3$-Sylow subgroup 
\[
N:=\langle(1,2,3),(4,5,6)\rangle\lhd G.
\]

Let $x\colon\Spec L\to\Spec\QQ$ be a $G$-torsor and let $\Spec K:=\Spec L^{N}\to\Spec\QQ$
be the induced $C_{2}$-torsor. Then, we claim that the normal subgroup
scheme $N'\lhd\ulAut_{\QQ}(x)$ corresponding to $N$ is isomorphic
to the twisted form $N^{(K)}$ of $N_{\QQ}$ induced by 
\begin{equation}
\Gamma_{\QQ}\xrightarrow{\tau_{K}}C_{2}\xrightarrow{\phi}\Aut(N_{\QQ}),\label{eq:Gal}
\end{equation}
where $\tau_{K}$ is the map corresponding to the $C_{2}$-torsor
$\Spec K\to\Spec\QQ$ and $\phi$ is the one used in the definition
of the wreath product $G=C_{3}\wr C_{2}$. Since it induces the $N$-torsor
$\Spec L\to\Spec K$, the group scheme $N'$ is trivialized by the
scalar extension $K/\QQ$; 
\[
N'_{K}\cong N_{K}\cong\overset{9\text{ copies}}{\overbrace{\Spec K\amalg\cdots\amalg\Spec K}}.
\]
The Galois action on this trivial group is again induced by 
\[
\Gamma_{\QQ}\xrightarrow{\tau_{K}}C_{2}\xrightarrow{\phi}\Aut(N_{\QQ})=\Aut(N_{K}).
\]
This shows the claim. Let 
\[
f_{x}\colon\B N^{(K)}\to\B\ulAut_{F}(x)\cong\cX
\]
be the induced morphism. Then, the image of the induced map $f_{x}\langle\QQ\rangle\colon(\B N^{(K)})\langle\QQ\rangle\to\cX\langle\QQ\rangle$
is exactly the set of the $G$-torsors $\Spec M\to\Spec\QQ$ such
that the associated $C_{2}$-torsor $\Spec M^{N}\to\Spec\QQ$ is isomorphic
to $\Spec K\to\Spec\QQ$; namely, the fiber of $\cX\langle\QQ\rangle\to(\B C_{2})\langle\QQ\rangle$
over the isomorphism class of $\Spec K\to\Spec\QQ$. In particular,
the image of $f_{x}\langle\QQ\rangle$ depends only on the isomorphism
class of $K$.

We next show that there is no breaking thin morphism coming from a
group scheme of order three. If $K=\QQ^{2}$, that is, if $\Spec K\to\Spec\QQ$
is the trivial $C_{2}$-torsor, then $N'$ is the constant group $N_{\QQ}$.
It contains (necessarily constant) subgroup schemes of order three
which contains elements of index 2. But, since $\B C_{3,\QQ}$ has
only one twisted sector, any morphism $\B C_{3,\QQ}\to\cX$ is not
breaking thin. If $K$ is a field, then $N$ has six connected components
and is of the form
\[
\Spec\QQ\amalg\Spec\QQ\amalg\Spec\QQ\amalg\Spec K\amalg\Spec K\amalg\Spec K.
\]
The first three components correspond to the diagonal elements of
$N=C_{3}\times C_{3}$, those elements fixed by the $C_{2}$-action.
The other three components correspond to the following $C_{2}$-orbits,
respectively: 
\[
\{(1,2,3),(4,5,6)\},\,\{(1,3,2),(4,6,5)\},\,\{(1,2,3)(4,6,5),(1,3,2)(4,5,6)\}.
\]
The only subgroup schemes of $N$ of order three are the diagonal
one $\Spec\QQ\amalg\Spec\QQ\amalg\Spec\QQ$ and the union $\Spec\QQ\amalg\Spec K$
of the identity component $\Spec\QQ$ and the component $\Spec K$
corresponding to 
\[
\{(1,2,3)(4,6,5),(1,3,2)(4,5,6)\}.
\]
The diagonal elements, $(1,2,3)(4,6,5)$ and $(1,3,2)(4,5,6)$, do
not have index 2. Thus, the induced morphisms $\B C_{3,\QQ}\to\cX$
and $\B(\Spec\QQ\amalg\Spec K)\to\cX$ are not breaking thin. In conclusion,
there is no breaking thin morphism of the form $\B H'\to\cX$, where
$H'$ is of order three. 

It remains possible that $H'$ is of order nine and isomorphic
to $N^{(K)}$, the twisted form of $N$ associated to a $C_{2}$-torsor
$\Spec K\to\Spec\QQ$. Since $N$ contains elements of index 2, the
morphism $\B N^{(K)}\to\cX$ is always a weakly breaking thin morphism.
To see when it is also breaking thin, we now compute $\pi_{0}(\cJ_{0}(\B N^{(K)}))$.
Recall that this set is identified with 
\[
\Hom(\mu_{9}(\overline{\QQ}),N^{(K)}(\overline{\QQ}))/\Gamma_{\QQ}.
\]
Fixing a generator of $\mu_{9}(\overline{\QQ})$, we may identify
\[
\Hom(\mu_{9}(\overline{\QQ}),N^{(K)}(\overline{\QQ}))=N^{(K)}(\overline{\QQ})=N=C_{3}\times C_{3}.
\]
The $\Gamma_{\QQ}$-action on this set is obtained by combining two
actions; the one induced from the $\Gamma_{\QQ}$-action on $\mu_{9}(\overline{\QQ})$
and the one induced from the action on $N^{(K)}(\overline{\QQ})$.
Through the above identification, the first action transitively interchanges
a nonidentity element $g\in N$ with its square $g^{2}$. The second
action is given by (\ref{eq:Gal}). We see that the four elements
of index 2, listed in (\ref{eq:ind-2}), are transitively permuted
by the combined $\Gamma_{\QQ}$-action except the two cases; the case
$K=\QQ^{2}$ and $K=\QQ(\zeta_{3})$ with $\zeta_{3}$ a primitive
cubic root of $1$. In the former case, the second action is trivial,
and hence the four elements of index 2 are divided into the two $\Gamma_{\QQ}$-orbits
\[
\{(1,2,3),\,(1,3,2)\},\,\{(4,5,6),(4,6,5)\}.
\]
In the latter case, since $\mu_{3}=\Spec\QQ\amalg\Spec\QQ(\zeta_{3})$,
the two $\Gamma_{\QQ}$-actions are \emph{synchronized}. Namely, whenever
$\gamma\in\Gamma_{F}$ maps $(1,2,3)\leftrightarrow(1,3,2)$ and $(4,5,6)\leftrightarrow(4,6,5)$
by the first action, then the same element $\gamma$ maps $(1,2,3)\leftrightarrow(4,5,6)$
and $(1,3,2)\leftrightarrow(4,5,6)$ by the second action. Similarly,
whenever $\gamma\in\Gamma_{F}$ fixes the four elements by the first
action, then the same element fixes them also by the second action.
Consequently, the combined action divides the four elements into two
orbits
\[
\{(1,2,3),\,(4,6,5)\},\,\{(1,3,2),(4,5,6)\}.
\]
Thus, the morphisms $\B N^{(\QQ^{2})}\to\cX$ and $\B N^{(\QQ(\zeta_{3}))}\to\cX$
are the only breaking thin morphisms. In particular, the union $T$
of breaking thin subsets of $\cX\langle\QQ\rangle$, which appears
in Conjecture \ref{conj:general}, is a thin subset. Note that the
image of $(\B N^{(\QQ^{2})})\langle\QQ\rangle\to\cX\langle\QQ\rangle$
contains only non-connected torsors, which are removed also in the
usual version of Malle's conjecture. We can summarize these results as follows:
\begin{prop}
For a $C_{2}$-torsor $\Spec K\to\Spec\QQ$, let $T_{K}$ be the image
of the map $(\B N^{(K)})\langle\QQ\rangle\to\cX\langle\QQ\rangle$.
Then, $\cX\langle\QQ\rangle=\bigsqcup_{K}T_{K}$. Moreover, we have:
\begin{enumerate}
\item Every $T_{K}$ is a weakly breaking thin subset. 
\item The subset $T_{K}$ is a breaking thin subset exactly when $K$ is
isomorphic to $\QQ^{2}$ or $\QQ(\zeta_{3})$. 
\item The subset $\bigsqcup_{K\ne\QQ^{2},\QQ(\zeta_{3})}T_{K}\subset\cX\langle\QQ\rangle$
as well as any cothin subset contained in it is a secure cothin subset.
\item There is no strongly secure subset of $\cX\langle\QQ\rangle$. 
\end{enumerate}
\end{prop}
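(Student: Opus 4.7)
The plan is to apply Proposition \ref{prop:elementwise-criterion} after a careful enumeration of subgroup schemes of $\ulAut_{\QQ}(x)$, building on the Galois-orbit computations carried out in the paragraphs immediately preceding the proposition.

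First I would establish the decomposition $\cX\langle\QQ\rangle = \bigsqcup_{K} T_{K}$. The surjection $G \twoheadrightarrow G/N \cong C_{2}$ induces $\cX = \B G_{\QQ} \to \B C_{2,\QQ}$, and Lemma \ref{lem:Aut-twist} identifies the image of $(\B N^{(K)})\langle\QQ\rangle \to \cX\langle\QQ\rangle$ with the fiber over the class of the $C_{2}$-torsor $\Spec K \to \Spec\QQ$. Next I would compute $a$ and $b$ of $(\cO,c)$ on $\cX$ and on $\B N^{(K)}$ via Proposition \ref{prop:Malle-1}. On $\cX$, the minimum of $c = \ind$ on twisted sectors is $2$, attained on the single $\QQ$-conjugacy class $\{(1,2,3),(1,3,2),(4,5,6),(4,6,5)\}$, giving $a(\cO,c) = 1/2$ and $b(\cO,c) = 1$. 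The same minimum is realized in $\B N^{(K)}$ on the pullback sectors, so $a = 1/2$ persists, while $b$ on $\B N^{(K)}$ counts the $\Gamma_{\QQ}$-orbits among the four index-two elements of $N$ for the combined action on $\widehat{\mu}(\overline{\QQ})$ and on $N^{(K)}(\overline{\QQ})$. The preceding text records this count: $b = 2$ precisely when $K \in \{\QQ^{2}, \QQ(\zeta_{3})\}$, and $b = 1$ otherwise. This delivers parts (1) and (2).

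For part (3) I would take $x \in T_{K}$ with $K \notin \{\QQ^{2}, \QQ(\zeta_{3})\}$ and invoke Proposition \ref{prop:elementwise-criterion}, examining each proper subgroup scheme $H' \subsetneq \ulAut_{\QQ}(x)$. For the induced $\B H' \to \cX$ to be even weakly breaking thin, $H'(\overline{\QQ})$ must contain an index-two element of $G$, since otherwise the pullback $a$-invariant strictly decreases below $1/2$. The subgroup analysis from the preceding text then forces $H'(\overline{\QQ}) \subseteq N$: subgroups of $G$ of orders $2$ and $6$ carry no index-two element; among subgroups of order $3$ inside $N$, only the diagonal and the anti-diagonal descend to $\Gamma_{\QQ}$-stable subgroup schemes of $N^{(K)}$, and both consist entirely of elements of index $4$; the only remaining candidate is $H' = N^{(K)}$, which by (2) is not breaking thin for the given $K$. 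Hence $x$ is secure, and any cothin subset contained in $\bigsqcup_{K \notin \{\QQ^{2}, \QQ(\zeta_{3})\}} T_{K}$ is secure. Part (4) is then immediate from (1): every $x \in \cX\langle\QQ\rangle$ lies in some $T_{K}$, which is weakly breaking thin, so no subset of $\cX\langle\QQ\rangle$ can be strongly secure. The main obstacle is the Galois orbit count on the four index-two elements of $N$: the action on $\widehat{\mu}(\overline{\QQ})$ and the twisted action on $N^{(K)}(\overline{\QQ})$ become synchronized exactly when $K = \QQ(\zeta_{3})$, producing a different orbit partition than for generic $K$.
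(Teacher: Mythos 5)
Your proposal is correct and follows essentially the same route as the paper: identify $T_K$ with the fibers of $\cX\langle\QQ\rangle\to(\B C_2)\langle\QQ\rangle$, compute the $b$-invariant of $\B N^{(K)}$ via the $\Gamma_\QQ$-orbit count on the four index-two elements under the combined cyclotomic/twisting action (synchronizing precisely at $K=\QQ(\zeta_3)$), and rule out smaller subgroup schemes by checking that order-two, order-six and order-three subschemes contain no index-two element. The paper likewise presents the proposition as a summary of exactly the subgroup-scheme and Galois-orbit computations you invoke, so there is no substantive difference.
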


\begin{rem}
Türkelli \cite{turkelli2015connected} takes a different approach
to modify Malle's conjecture incorporating Klüners' counterexample;
he proposes changing the exponent of the log factor instead of keeping
the exponent unchanged and removing a thin subset.
\end{rem}

\subsection{Comprehensiveness\label{subsec:Comprehensiveness}}
\begin{defn}
Let $G$ be a finite group and let $c\colon\Conj(G)\to\RR_{\ge0}$
be a function such that for $C\in\Conj(G)$, $c(C)=0$ if and only
if $C=[1]$. We say that $G$ is $c$\emph{-comprehensive }if for
any non-identity conjugacy class $C\in\Conj(G)$ with the minimal
$c$-value, the elements in $C$ generate $G$.
\end{defn}

\begin{example}
The subgroup generated by the elements of a conjugacy class is a normal
subgroup. Therefore, a simple subgroup is $c$-comprehensive for any
$c$.
\end{example}

\begin{example}
For the symmetric group $S_{n}$, let us consider the index function,
$\ind\colon\Conj(S_{n})\to\ZZ_{\ge0}$. This takes the minimal value
only at the conjugacy class of all transpositions. Since $S_{n}$
is generated by transpositions, $S_{n}$ is $\ind$-comprehensive.
\end{example}

\begin{prop}
\label{prop:comprehensive}Let $G$ be a finite group, let $\cX:=\B G_{F}$
and let $x\colon\Spec L\to\Spec F$ be a connected $G$-torsor, that
is, an $F$-point of $\cX$. Let $c\colon\pi_{0}(\cJ_{0}\cX)=\FConj(G)\to\RR_{\ge0}$
be a raising function. Denoting the composition $\Conj(G)\to\FConj(G)\xrightarrow{c}\RR_{\ge0}$
again by $c$, we suppose that $G$ is $c$-comprehensive. Then, $x$
is a secure element of $\cX\langle F\rangle.$ Namely, the subset
of $\cX\langle F\rangle$ consisting of all connected $G$-torsors
is a strongly secure cothin subset. In particular, the union of weakly
breaking thin subsets of $\cX\langle F\rangle$ as well as the one
of breaking thin subsets is a thin subset. 
\end{prop}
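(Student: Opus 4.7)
The strategy is to invoke Proposition~\ref{prop:elementwise-criterion} and verify the stronger ``strongly secure'' condition: for every proper sub-$F$-group-scheme $H\subsetneq\ulAut_F(x)$, the induced morphism $f\colon\B H\to\B\ulAut_F(x)\cong\cX$ fails to be a weakly breaking thin morphism. Strong security at every connected $G$-torsor gives both assertions of the statement (since strong security implies security), and the ``in particular'' conclusion then follows because any subset of a thin subset is itself thin.

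The first step is to classify the proper sub-$F$-group-schemes of $\ulAut_F(x)$. Writing $x=\Spec L\to\Spec F$ with $L/F$ a Galois field extension of group $G$, Lemma~\ref{lem:Aut-twist} realizes $\ulAut_F(x)$ as the twisted form of $G^{\op}$ whose Galois action on the $\overline{F}$-points $G^{\op}=G$ is $\gamma\cdot g=\overline{\gamma}\, g\,\overline{\gamma}^{-1}$, where $\overline{\gamma}$ denotes the image of $\gamma$ under the (surjective, by connectedness of $x$) quotient $\Gamma_F\twoheadrightarrow\Gal(L/F)=G$. The Galois-stable subgroups of $G$ are therefore exactly those closed under all $G$-conjugation, i.e.\ the normal subgroups, so proper sub-$F$-group-schemes of $\ulAut_F(x)$ correspond to proper normal subgroups $N\lhd G$; denote the associated twisted form by $N'$. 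The trivial case $N=\{1\}$ gives $\B N'=\Spec F$, which is excluded from nice stacks, so $f$ is not a thin morphism and nothing needs to be checked.

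For $\{1\}\ne N\subsetneq G$, the sector map of $f$ is, via Example~\ref{exa:BG tw sectors}, induced by $N\hookrightarrow G$: a twisted sector of $\B N'$ is sent to the $F$-conjugacy class in $G$ of any of its representatives. The image in $\pi_0^*(\cJ_0\cX)=\FConj(G)\setminus\{[1]\}$ is therefore the set of classes $\overline{C}$ meeting $N$; normality forces every $G$-conjugacy class to be either contained in $N$ or disjoint from it, and closure of $N$ under coprime powers upgrades this to the equivalence $\overline{C}\cap N\ne\emptyset\iff\overline{C}\subset N$. If some $\overline{C}$ with $c(\overline{C})=\min c$ were contained in $N$, decomposing it into $G$-conjugacy classes $C_1\cup\cdots\cup C_r$, each $C_i\subset N$ would achieve the minimum $c$-value, so $c$-comprehensiveness would force $\langle C_i\rangle=G\subset N$, contradicting $N\subsetneq G$. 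Hence no minimal-$c$ sector of $\cX$ lies in the image, so $\min f^*c$ on the twisted sectors of $\B N'$ strictly exceeds $\min c$ on those of $\cX$, and Proposition~\ref{prop:Malle-1} gives $a(f^*\cO,f^*c)<a(\cO,c)$ strictly; in particular $f$ is not weakly breaking.

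Combined with Proposition~\ref{prop:elementwise-criterion} this shows that every connected $G$-torsor is strongly secure. For cothinness, a non-connected $G$-torsor admits a reduction of structure group to some proper subgroup $H\subsetneq G$, so it lies in the image of the thin morphism $\B H_F\to\B G_F=\cX$; finiteness of the set of conjugacy classes of proper subgroups then implies that the complement of the connected $G$-torsors is a finite union of thin subsets and hence thin. Finally, the union $T$ of all weakly breaking (resp.\ breaking) thin subsets is disjoint from the strongly secure (resp.\ secure) connected $G$-torsors, so $T$ is contained in their thin complement and is itself thin. The step I expect to require the most care is the identification of sub-$F$-group-schemes of $\ulAut_F(x)$ with normal subgroups of $G$, which relies essentially on the connectedness of $x$ to ensure that the twisting cocycle surjects onto $\Inn(G)$.
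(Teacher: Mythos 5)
Your proof is correct and follows essentially the same route as the paper's: both invoke Proposition~\ref{prop:elementwise-criterion}, both exploit the surjectivity of $\Gamma_F\twoheadrightarrow G$ coming from connectedness of $x$ to see that (via Lemma~\ref{lem:Aut-twist}) the components of $\ulAut_F(x)$ are indexed by $\Conj(G)$ so that its proper subgroup schemes are the proper normal subgroups, and both then apply $c$-comprehensiveness to rule out a $c$-minimal class lying in such a subgroup. Your write-up merely spells out more explicitly the passage from ``no $c$-minimal sector in the image'' to the strict inequality of $a$-invariants via Proposition~\ref{prop:Malle-1}, and the paper leaves this implicit; the cothinness argument is identical.
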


\begin{proof}
As for the first assertion, it suffices to show that the twisted form
$\ulAut_{F}(x)$ of $G_{F}^{\op}$ does not contain a proper subgroup
scheme containing a $c$-minimal element. Since $x$ is connected,
the map $\phi_{x}\colon\Gamma_{F}\to G$ corresponding to $x$ is
surjective. From Lemma \ref{lem:Aut-twist}, 
\[
\pi_{0}(\ulAut_{F}(x))=\ulAut_{F}(x)(\overline{F})/\Gamma_{F}=\Conj(G).
\]
From the $c$-comprehensiveness, if a subgroup scheme $H$ of $\ulAut_{F}(x)$
contains a $c$-minimal element, then it is in fact the entire group
scheme $\ulAut_{F}(x)$. Thus, the first assertion of the proposition
holds. 

The second and the third assertions follow from the fact that the
set of non-connected $G$-torsors over $F$ is the union of the images
of the maps $(\B H_{F})\langle F\rangle\to(\B G_{F})\langle F\rangle$
associated to proper subgroups $H\subsetneq G$.
\end{proof}
\begin{rem}
For a $c$-comprehensive group $G$, the stack $\cX=\B G_{F}$ may
have a breaking thin morphism. For example, let $n\ge4$ and let $H:=\langle(1,2),(3,4)\rangle\subset S_{n}$.
The induced morphism $\B H_{F}\to\B(S_{n})_{F}$ is breaking thin.
However, every $F$-point in its image is a non-connected $S_{n}$-torsor. 
\end{rem}

\bibliographystyle{alpha}
\bibliography{manin-malle}

\end{document}